\newtheorem{theorem}{Theorem}[section]
\newtheorem{lemma}[theorem]{Lemma}
\newtheorem{proposition}[theorem]{Proposition}
\theoremstyle{plain}
\theoremstyle{definition}
\newtheorem{definition}[theorem]{Definition}
\newtheorem{remark}[theorem]{Remark}
\numberwithin{equation}{section}
\renewcommand{\labelenumi}{\textup{(\theenumi)}}
\newcommand{\End}{\operatorname{End}}
\newcommand{\Homeo}{\operatorname{Homeo}}
\newcommand{\id}{\operatorname{id}}
\newcommand{\Ker}{\operatorname{Ker}}
\newcommand{\Ad}{\operatorname{Ad}}
\newcommand{\Tr}{\operatorname{Tr}}
\newcommand{\K}{\mathcal{K}}
\newcommand{\C}{\mathcal{C}}
\newcommand{\N}{\mathbb{N}}
\newcommand{\Z}{\mathbb{Z}}
\newcommand{\T}{\mathbb{T}}
\newcommand{\Zp}{{\mathbb{Z}}_+}
\def\R{{\mathcal{R}}}
\def\WRA{{\widetilde{\R}_A}}
\title{Topological conjugacy of topological Markov shifts and Ruelle algebras }
\author{Kengo Matsumoto \\
Department of Mathematics \\
Joetsu University of Education \\
Joetsu, 943-8512, Japan
}
\begin{document}
\maketitle

\date{ }

\def\det{{{\operatorname{det}}}}

\begin{abstract}
We will characterize topologically conjugate two-sided topological Markov shifts
$(\bar{X}_A,\bar{\sigma}_A)$
in terms of the associated asymptotic Ruelle $C^*$-algebras 
${\mathcal{R}}_A$
with its commutative $C^*$-subalgebras $C(\bar{X}_A)$
 and the canonical circle actions.  
We will also show that  extended Ruelle algebras $\WRA$, 
which are unital, purely infinite version of the asymptotic Ruelle algebras, 
with its commutative $C^*$-subalgebras $C(\bar{X}_A)$
and the canonical torus actions $\gamma^A$ are complete invariants for topological conjugacy of  two-sided topological Markov shifts.  
We then have a computable topological conjugacy invariant, written in terms of the underlying matrix, of a two-sided topological Markov shift by using K-theory of the extended Ruelle algebra. 
The diagonal action of $\gamma^A$ has a unique KMS-state on 
$\WRA$, which is an extension of the Parry measure on $\bar{X}_A$.
\end{abstract}

\def\R{{\mathcal{R}}}
\def\OA{{{\mathcal{O}}_A}}
\def\OB{{{\mathcal{O}}_B}}
\def\RA{{{\mathcal{R}}_A}}
\def\WRA{{\widetilde{\R}_A}}
\def\WRB{{\widetilde{\R}_B}}
\def\RAC{{{\mathcal{R}}_A^\circ}}
\def\RAR{{{\mathcal{R}}_A^\rho}}
\def\RTA{{{\mathcal{R}}_{A^t}}}
\def\RB{{{\mathcal{R}}_B}}
\def\FA{{{\mathcal{F}}_A}}
\def\FB{{{\mathcal{F}}_B}}
\def\FA{{{\mathcal{F}}_A}}
\def\FTA{{{\mathcal{F}}_{A^t}}}
\def\FB{{{\mathcal{F}}_B}}
\def\DTA{{{\mathcal{D}}_{{}^t\!A}}}
\def\FDA{{{\frak{D}}_A}}
\def\FDTA{{{\frak{D}}_{{}^t\!A}}}
\def\FFA{{{\frak{F}}_A}}
\def\FFTA{{{\frak{F}}_{A^t}}}
\def\FFB{{{\frak{F}}_B}}
\def\OZ{{{\mathcal{O}}_Z}}
\def\V{{\mathcal{V}}}
\def\E{{\mathcal{E}}}
\def\OTA{{{\mathcal{O}}_{A^t}}}
\def\OTB{{{\mathcal{O}}_{B^t}}}
\def\SOA{{{\mathcal{O}}_A}\otimes{\mathcal{K}}}
\def\SOB{{{\mathcal{O}}_B}\otimes{\mathcal{K}}}
\def\SOZ{{{\mathcal{O}}_Z}\otimes{\mathcal{K}}}
\def\SOTA{{{\mathcal{O}}_{A^t}\otimes{\mathcal{K}}}}
\def\SOTB{{{\mathcal{O}}_{B^t}\otimes{\mathcal{K}}}}
\def\DA{{{\mathcal{D}}_A}}
\def\DB{{{\mathcal{D}}_B}}
\def\DZ{{{\mathcal{D}}_Z}}

\def\SDA{{{\mathcal{D}}_A}\otimes{\mathcal{C}}}
\def\SDB{{{\mathcal{D}}_B}\otimes{\mathcal{C}}}
\def\SDZ{{{\mathcal{D}}_Z}\otimes{\mathcal{C}}}
\def\SDTA{{{\mathcal{D}}_{A^t}\otimes{\mathcal{C}}}}
\def\O2{{{\mathcal{O}}_2}}
\def\D2{{{\mathcal{D}}_2}}

\def\Max{{{\operatorname{Max}}}}
\def\Tor{{{\operatorname{Tor}}}}
\def\Ext{{{\operatorname{Ext}}}}
\def\Per{{{\operatorname{Per}}}}
\def\PerB{{{\operatorname{PerB}}}}
\def\Homeo{{{\operatorname{Homeo}}}}
\def\HA{{{\frak H}_A}}
\def\HB{{{\frak H}_B}}
\def\HSA{{H_{\sigma_A}(X_A)}}
\def\Out{{{\operatorname{Out}}}}
\def\Aut{{{\operatorname{Aut}}}}
\def\Ad{{{\operatorname{Ad}}}}
\def\Inn{{{\operatorname{Inn}}}}
\def\det{{{\operatorname{det}}}}
\def\exp{{{\operatorname{exp}}}}
\def\cobdy{{{\operatorname{cobdy}}}}
\def\Ker{{{\operatorname{Ker}}}}
\def\ind{{{\operatorname{ind}}}}
\def\id{{{\operatorname{id}}}}
\def\supp{{{\operatorname{supp}}}}
\def\co{{{\operatorname{co}}}}
\def\Sco{{{\operatorname{Sco}}}}
\def\U{{{\mathcal{U}}}}

\section{Introduction}

A Smale space $(X,\phi)$ is a hyperbolic dynamical system having a local product structure (cf. \cite{Bowen1}, \cite{Smale}).
A two-sided topological Markov shift $(\bar{X}_A,\bar{\sigma}_A)$
gives a typical example of Smale space.  
D. Ruelle in \cite{Ruelle1}, \cite{Ruelle2} introduced $C^*$-algebras from a Smale space $(X,\phi)$.
After the Ruelle's work, 
I. Putnam in \cite{Putnam1}, \cite{Putnam2}
 has initiated to study structure of these $C^*$-algebras by using groupoid technique
(for further studies, see \cite{KamPutSpiel}, \cite{Putnam3}, \cite{Putnam4}, \cite{PutSp}, 
\cite{Thomsen}, etc. ).
For a Smale space $(X,\phi)$, Putnam considered three kinds of $C^*$-algebras
$S(X,\phi), U(X,\phi)$ and $A(X,\phi)$ 
and their crossed products 
$S(X,\phi)\rtimes\Z, U(X,\phi)\rtimes\Z$ and $A(X,\phi)\rtimes\Z$ 
induced by the original homeomorphisms $\phi$, respectively. 
The algebras $S(X,\phi), U(X,\phi)$ and $A(X,\phi)$
are the $C^*$-algebras of the groupoids of stable equivalence relation on $X$, 
 unstable equivalence relation on $X$
 and
 asymptotic  equivalence relation on $X$,
respectively.
I. Putnam has pointed out that 
if the Smale space $(X,\phi)$ is a two-sided topological Markov shift
$(\bar{X}_A,\bar{\sigma}_A)$ defined by an irreducible matrix $A$,
the $C^*$-algebras
$S(X,\phi), U(X,\phi)$ are isomorphic to AF-algebras 
$\FA\otimes\K, \mathcal{F}_{A^t}\otimes\K$,
and 
$S(X,\phi)\rtimes\Z, U(X,\phi)\rtimes\Z$
are isomorphic to
$\SOA, \mathcal{O}_{A^t}\otimes\K$
where $\OA, \FA$ are the Cuntz--Krieger algebra,
the canonical AF-subalgebra of $\OA$, respectively for the matrix $A$,
and
$\mathcal{F}_{A^t}, \mathcal{O}_{A^t}$
are those ones for the transposed matrix $A^t$ of $A$,
and
$\K$ is the $C^*$-algebra of compact operators on separable infinite dimensional Hilbert space
$\ell^2(\N)$.

In \cite{MaPre2017}, the author has introduced  notions of asymptotic continuous orbit equivalence
and asymptotic conjugacy in Smale spaces,
and studied relationship with the crossed product $A(X,\phi)\rtimes\Z$
of the asymptotic Ruelle $C^*$-algebra.  
In this paper, we will restrict  our interest in Smale spaces to two-sided topological Markov shifts.
Let $A$ be an $N\times N$ irreducible non-permutation matrix with entries in $\{0,1\}$.
The shift space 
$\bar{X}_A$ of the two-sided topological Markov shift
$(\bar{X}_A,\bar{\sigma}_A)$ is defined by the compact metric space of bi-infinite sequences 
$(x_i)_{i\in\Z}$ satisfying $A(x_i,x_{i+1}) =1, i \in \Z$ 
with shift transformation
$\bar{\sigma}_A((x_i)_{i\in \Z}) =(x_{i+1})_{i\in \Z}$,
where metric $d$ on $\bar{X}_A$ is defined by
\begin{equation*}
d((x_n)_{n\in \Z}, (y_n)_{n\in \Z}) =
\begin{cases}
0 & \text{ if } (x_n)_{n\in \Z} = (y_n)_{n\in \Z}, \\
1 & \text{ if } x_0 \ne y_0,\\
(\lambda_0)^{k+1} & \text{ if } k = \Max\{|n| \mid x_i = y_i \text{ for all } i 
\text{ with } |i| \le n\}
\end{cases}
\end{equation*}
 for some fixed real number $0<\lambda_0<1$. 
Let $G_A^a$ be the asymptotic \'etale groupoid for $(\bar{X}_A,\bar{\sigma}_A)$
defined by the asymptotic equivalence relation
\begin{equation*}
G_A^a =\{(x, z) \in \bar{X}_A\times\bar{X}_A 
\mid
\lim_{n\to\infty}d(\bar{\sigma}_A^n(x), \bar{\sigma}_A^n(z) )
 =\lim_{n\to\infty}d(\bar{\sigma}_A^{-n}(x), \bar{\sigma}_A^{-n}(z) )
=0\}. 
\end{equation*}
There are natural groupoid operations on $G_A^a$ with topology which makes 
the groupoid $G_A^a$ \'etale (see \cite{Putnam1}, \cite{Putnam2}).
For a general theory for \'etale groupoids, see \cite{AnanRenault}, \cite{Renault1}, 
\cite{Renault2}, \cite{Renault3}, etc.   
As in \cite{Putnam2}, the $C^*$-algebra  $A(\bar{X}_{\bar{\sigma}_A},\bar{\sigma}_A)\rtimes\Z$
is realized as the $C^*$-algebra $C^*(G_A^a\rtimes\Z)$ of the \'etale groupoid 
\begin{equation*}
G_A^a\rtimes\Z
 = \{(x,n,z)  \in \bar{X}_A\times\Z\times\bar{X}_A \mid
 (\bar{\sigma}_A^k(x), \bar{\sigma}_A^l(z)) \in G_A^a, n =k-l\}.
\end{equation*}
The $C^*$-algebra is denoted by $\RA$
and called the asymptotic Ruelle algebra in this paper.
Let $d_A:G_A^a\rtimes\Z\longrightarrow\Z$ be the groupoid homomorphism
defined by
$d_A(x,n,z) =n$.
As the unit space $(G_A^a\rtimes\Z)^\circ$ of $G_A^a\rtimes\Z$ is homeomorphic to $\bar{X}_A$,
the commutative $C^*$-algebra $C(\bar{X}_A)$ is naturally regarded as a subalgebra of $\RA$.
As the algebra 
$\RA$ is a crossed product $A(\bar{X}_{\bar{\sigma}_A},\bar{\sigma}_A)\rtimes\Z$,
it has the dual action $\rho^A_t$ of $t \in \T$. 
In \cite{MaPre2017}, 
an extended version $G_A^{s,u}\rtimes\Z^2$ of the groupoid
$G_A^a\rtimes\Z$ is introduced by setting 
\begin{equation*}
G_A^{s,u}\rtimes\Z^2
 = \{(x,p,q,z) \in \bar{X}_A\times\Z\times\Z\times\bar{X}_A
\mid (\bar{\sigma}_A^p(x), z)\in G_A^s, (\bar{\sigma}_A^q(x),z)\in G_A^u \}
\end{equation*}
where
\begin{align*}
G_A^s =& \{
(x, z) \in \bar{X}_A\times\bar{X}_A 
\mid
\lim_{n\to\infty}d(\bar{\sigma}_A^n(x), \bar{\sigma}_A^n(z) ) =0 \}, \\ 
G_A^u =& \{
(x, z) \in \bar{X}_A\times\bar{X}_A 
\mid
\lim_{n\to\infty}d(\bar{\sigma}_A^{-n}(x), \bar{\sigma}_A^{-n}(z) ) =0 \}.
\end{align*}
There are natural groupoid operations on $G_A^{s,u}\rtimes\Z^2$ 
with topology which makes 
$G_A^{s,u}\rtimes\Z^2$ \'etale (see \cite{MaPre2017}). 
Let $c_A:G_A^{s,u}\rtimes\Z^2\longrightarrow\Z^2$ be the groupoid homomorphism
defined by
$c_A(x,p,q,z) =(p,q)$.
The groupoid $C^*$-algebra $C^*(G_A^{s,u}\rtimes\Z^2)$ is denoted by
$\WRA$ which was denoted by $\R^{s,u}_A$ in \cite{MaPre2017}.
Since the unit space $(G_A^{s,u}\rtimes\Z^2)^\circ$ of $G_A^{s,u}\rtimes\Z^2$
 is $\{(x,0,0,x) \in G_A^{s,u}\rtimes\Z^2\mid x \in \bar{X}_A\},$
which is regarded as $\bar{X}_A$, 
the algebra $\WRA$ includes $C(\bar{X}_A)$ as a subalgebra in natural way.
There is a projection $E_A$ in the tensor product 
$\mathcal{O}_{A^t}\otimes\OA$ such that 
$\WRA$ is naturally isomorphic to $E_A(\mathcal{O}_{A^t}\otimes\OA)E_A$.
Hence  the algebra $\WRA$ might be regarded as a bilateral Cuntz--Krieger algebra.
The tensor product 
$\alpha^{A^t}_r\otimes\alpha^A_s$ of the
gauge actions
$\alpha^{A^t}_r$ on $\mathcal{O}_{A^t}$
and
$\alpha^A_s$ on $\OA$ 
gives rise to an action $\gamma^A_{(r,s)}$ of $(r,s) \in \T^2$
on $\WRA$.
It has been shown in \cite{MaPre2017} that the fixed point algebra 
$(\WRA)^{\delta^A}$ of $\WRA$ under
the diagonal action $\delta^A_t = \gamma^A_{(t,t)}, t \in \T$
is isomorphic to $\RA$.

In \cite{CK} (cf. \cite{Cu3}), Cuntz--Krieger proved that
the stabilized Cuntz--Krieger algebra $\SOA$ with
its diagonal $C^*$-subalgebra $\SDA$ of $\SOA$,
where $\C$ denotes the maximal abelian $C^*$-subalgebra of $\K$ 
consisting of diagonal operators on $\ell^2(\N)$,
 and the stabilized gauge action
$\alpha^A\otimes \id$ is invariant under topological conjugacy of 
the two-sided topological Markov shift $(\bar{X}_A,\bar{\sigma}_A)$
for irreducible non-permutation matrix $A$.
T. M. Carlsen and J. Rout  have recently proved in \cite{CR}
that the converse also holds
even for more general matrices without irreducibility and non-permutation.
As a consequence,
the stabilized Cuntz--Krieger algebra $\SOA$ with
its diagonal $C^*$-subalgebra $\SDA$ and the stabilized gauge action
$\alpha^A\otimes \id$ is a complete invariant 
of the topological conjugacy of the two-sided topological Markov shift.
Inspired by this fact, 
we will in this paper show that 
the Ruelle algebra $\RA$ with its subalgebra $C(\bar{X}_A)$ and the dual action
$\rho^A$ is a complete invariant of the two-sided topological Markov shift
$(\bar{X}_A,\bar{\sigma}_A)$.
We will also see that the $C^*$-algebra $\WRA$  
with its subalgebra $C(\bar{X}_A)$ 
and the action $\gamma^A$ of $\T^2$
is also a complete invariant of  topological conjugacy of 
$(\bar{X}_A,\bar{\sigma}_A)$.
We will show  the following theorem.
\begin{theorem}[{Theorem \ref{thm:main}}] \label{thm:1.1}
Let $A,B$ be irreducible, non-permutation matrices with entries in $\{0,1\}$.
The following six conditions are equivalent.
\begin{enumerate}
\renewcommand{\theenumi}{\roman{enumi}}
\renewcommand{\labelenumi}{\textup{(\theenumi)}}
\item
Topological Markov shifts 
$(\bar{X}_A,\bar{\sigma}_A)$ and
$(\bar{X}_B,\bar{\sigma}_B)$
 are topologically conjugate.
\item
Topological Markov shifts 
$(\bar{X}_A,\bar{\sigma}_A)$ and
$(\bar{X}_B,\bar{\sigma}_B)$
 are asymptotically  conjugate.
\item
There exists an isomorphism 
$\varphi: G_A^{a}\rtimes\Z \longrightarrow G_B^a\rtimes\Z$
of \'etale groupoids such that $d_B\circ\tilde{\varphi} = d_A$. 
\item
There exists an isomorphism 
$\tilde{\varphi}: G_A^{s,u}\rtimes\Z^2 \longrightarrow G_B^{s,u}\rtimes\Z^2$
of \'etale groupoids such that $c_B\circ\tilde{\varphi} = c_A$. 
\item
There exists an isomorphism 
$\Phi: \RA\longrightarrow\R_B$ of $C^*$-algebras
such that 
$\Phi(C(\bar{X}_A)) = C(\bar{X}_B)$
and
$\Phi\circ\rho^A_{t} =\rho^B_{t}\circ{\Phi}$
for
$t\in \T$.
\item
There exists an isomorphism 
$\tilde{\Phi}: \WRA\longrightarrow\widetilde{\R}_B$ of $C^*$-algebras
such that 
$\tilde{\Phi}(C(\bar{X}_A)) = C(\bar{X}_B)$
and
$\tilde{\Phi}\circ\gamma^A_{(r,s)} =\gamma^B_{(r,s)}\circ\tilde{\Phi}$
for
$(r,s)\in \T^2$.
\end{enumerate}
\end{theorem}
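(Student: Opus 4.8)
The plan is to establish a cycle of implications, grouping the six conditions into a routine ``dynamics-to-algebra'' half, (i)$\Rightarrow$(ii)$\Rightarrow$(iii),(iv)$\Rightarrow$(v),(vi), and a substantive ``algebra-to-dynamics'' half that closes the loop back to (i). For the forward half, a topological conjugacy $h\colon\bar{X}_A\to\bar{X}_B$ manifestly preserves the stable, unstable and asymptotic equivalence relations and intertwines the shifts, so it is in particular an asymptotic conjugacy in the sense of \cite{MaPre2017}, giving (i)$\Rightarrow$(ii). An asymptotic conjugacy $h$ then induces the groupoid maps $(x,n,z)\mapsto(h(x),n,h(z))$ and $(x,p,q,z)\mapsto(h(x),p,q,h(z))$; one checks that these maps are well defined (using that $h$ preserves $G_A^a$ and commutes with the shifts up to asymptotic equivalence), are homeomorphisms for the \'etale topologies, and intertwine the cocycles, yielding (ii)$\Rightarrow$(iii) and (ii)$\Rightarrow$(iv). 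Functoriality of $C^*(\,\cdot\,)$ finally turns each cocycle-preserving isomorphism into a $C^*$-isomorphism sending $C(\bar{X}_A)$ onto $C(\bar{X}_B)$ and intertwining the dual actions, since $\rho^A$ and $\gamma^A$ are exactly the actions of $\T$ and $\T^2$ dual to $d_A$ and $c_A$; this gives (iii)$\Rightarrow$(v) and (iv)$\Rightarrow$(vi).

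The core of the argument is the reverse reconstruction, (v)$\Rightarrow$(iii) and (vi)$\Rightarrow$(iv). One cannot apply Renault's reconstruction theorem \cite{Renault3} to $G_A^a\rtimes\Z$ or $G_A^{s,u}\rtimes\Z^2$ directly, because these crossed-product groupoids are not topologically principal: each periodic point of $\bar{\sigma}_A$ generates nontrivial isotropy away from the zero level of the cocycle, so $C(\bar{X}_A)$ fails to be Cartan in the full algebra. I would circumvent this by descending to the kernel of the cocycle. Since $d_A^{-1}(0)=G_A^a$ and $c_A^{-1}(0,0)=G_A^s\cap G_A^u=G_A^a$, and since the equivalence-relation groupoid $G_A^a$ is principal and amenable, the fixed-point algebra of $\RA$ under $\rho^A$ (respectively of $\WRA$ under the full action $\gamma^A$) is $C^*(G_A^a)$, and $(C^*(G_A^a),C(\bar{X}_A))$ is a genuine Cartan pair. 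Given an isomorphism as in (v) or (vi), I would restrict it to these fixed-point algebras, apply Renault's theorem to extract an isomorphism $\varphi_0\colon G_A^a\to G_B^a$ of principal groupoids carrying unit space to unit space, and then recover the grading from the spectral subspaces of the action: an intertwiner of $\rho^A$ and $\rho^B$ matches the degree-$n$ spectral subspaces and hence the levels $d_A^{-1}(n)$ and $d_B^{-1}(n)$, so $\varphi_0$ extends to a cocycle-preserving isomorphism of $G_A^a\rtimes\Z$ onto $G_B^a\rtimes\Z$; the $\Z^2$-graded case is identical.

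To close the loop I would run (iv)$\Rightarrow$(ii) (equivalently (iii)$\Rightarrow$(ii)) and then (ii)$\Rightarrow$(i). Restricting a cocycle-preserving groupoid isomorphism $\tilde{\varphi}$ to unit spaces yields a homeomorphism $h\colon\bar{X}_A\to\bar{X}_B$; since the shift is realized by the open bisection $x\mapsto(x,1,\bar{\sigma}_A(x))$ in $d_A^{-1}(1)$ and $\tilde{\varphi}$ preserves both $d$ and the asymptotic relation $d_A^{-1}(0)=G_A^a$, the map $h$ intertwines $\bar{\sigma}_A$ with $\bar{\sigma}_B$ up to asymptotic equivalence, which is precisely an asymptotic conjugacy. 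The final passage (ii)$\Rightarrow$(i), upgrading asymptotic conjugacy to genuine topological conjugacy of the shifts, I would obtain from the framework of \cite{MaPre2017}, in parallel with the Carlsen--Rout characterization \cite{CR} that motivates the whole statement.

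I expect the genuinely delicate point to be the reconstruction half: verifying that $C(\bar{X}_A)$ is a Cartan subalgebra of the fixed-point algebra $C^*(G_A^a)$ and that the spectral-subspace matching faithfully rebuilds the $\Z$- (resp.\ $\Z^2$-) valued cocycle. This is exactly where the non-topological-principality of the ambient crossed-product groupoids---caused by isotropy at periodic points---must be sidestepped, and where the interplay between the gauge action and Renault's theorem is indispensable; by comparison the forward implications and the dynamical upgrade (ii)$\Rightarrow$(i) are more routine.
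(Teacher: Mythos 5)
Your overall architecture is workable, and your reconstruction half, (v)$\Rightarrow$(iii) and (vi)$\Rightarrow$(iv) --- passing to the fixed-point algebra $C^*(G_A^a)$, noting that $(C^*(G_A^a),C(\bar{X}_A))$ is a Cartan pair because $G_A^a$ is principal, \'etale and amenable, invoking Renault's reconstruction theorem, and recovering the $\Z$- resp.\ $\Z^2$-grading from spectral subspaces --- is a legitimate (if detail-heavy) substitute for what the paper simply cites: the equivalences (ii)$\Leftrightarrow$(iii)$\Leftrightarrow$(v) are taken from \cite{MaPre2017}, and the paper itself performs no reconstruction at all, proving only the easy chain (i)$\Rightarrow$(iv)$\Rightarrow$(vi)$\Rightarrow$(v) and closing the loop through the cited equivalences together with (i)$\Leftrightarrow$(ii).

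The genuine gap is your final step. You label (ii)$\Rightarrow$(i) --- upgrading asymptotic conjugacy to topological conjugacy --- as ``more routine'' and propose to obtain it ``from the framework of \cite{MaPre2017}, in parallel with \cite{CR}''. Neither reference contains this implication. It is precisely the main new result of the present paper (Theorem \ref{thm:section2}), and the introduction states this explicitly: \cite{MaPre2017} only relates asymptotic conjugacy to the groupoid and $C^*$-algebraic conditions (iii) and (v), while Carlsen--Rout \cite{CR} concerns diagonal-preserving, gauge-intertwining isomorphisms of stabilized Cuntz--Krieger algebras and does not touch the asymptotic equivalence relation. The actual proof requires a concrete symbolic-dynamics argument: from the defining data of an asymptotic conjugacy $h$ with constant $K$ one sets $h_+(x)=\bar{\sigma}_B^{K}(h(x))_+$, verifies $h_+\circ\bar{\sigma}_A=\sigma_B\circ h_+$, concludes that $h_+$ is a sliding block code, promotes its block map to a two-sided sliding block code $\Phi_\infty:\bar{X}_A\to\bar{X}_B$ (and similarly $\Psi_\infty$ from $h^{-1}$), and then shows
$\Phi_\infty\circ\Psi_\infty=\bar{\sigma}_B^{2K}$ and $\Psi_\infty\circ\Phi_\infty=\bar{\sigma}_A^{2K}$,
so that $\Phi_\infty$ is a bijective sliding block code, i.e.\ a topological conjugacy. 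Without an argument of this kind your cycle of implications never returns to (i), so the theorem is not proved; you have in effect inverted the difficulty assessment, spending your effort re-deriving what the paper gets by citation while deferring the one step that is genuinely new.
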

The equivalences among (ii), (iii) and (v) come from \cite{MaPre2017}.
The main assertion is the implication
(ii) $\Longrightarrow$ (i) which will be proved in Theorem \ref{thm:section2}.
Other implications will be seen in the proof of Theorem \ref{thm:main},
which are not tough tasks.

Since the algebra $\WRA$ is a unital, simple, purely infinite, nuclear $C^*$-algebra satisfying UCT,
its isomorphism class is completely determined by its K-theory date 
by a general classification theorem 
(\cite{Kirchberg}, \cite{Phillips}, \cite{Ro2}).
The K-groups $K_*(\WRA)$ are seen by the K\"{u}nneth formulas 
and the universal coefficient theorem
such that 
\begin{equation*}
K_0(\WRA) \cong KK^1(\OTA,\OA),\qquad K_1(\WRA) \cong KK(\OTA,\OA).
\end{equation*}
As a corollary of Theorem \ref{thm:1.1}, 
we know that the group
$K_0(\WRA)$ and the position of the class $[1_{\WRA}]$
of the unit $1_{\WRA}$ of $\WRA$ in $K_0(\WRA)$
is invariant under topological conjugacy of 
$(\bar{X}_A,\bar{\sigma}_A)$.
We see that
$(K_0(\WRA), [1_{\WRA}])$
is isomorphic to 
$(K_0(\OTA\otimes\OA), [E_A])$
and
the class $[E_A]$ of the projection $E_A$
actually lives  in the group
$K_0(\OTA)\otimes K_0(\OA)$.
We set
the vector
$e_i = [0,\dots,0,\overset{i}{1},0,\dots,0] \in \Z^N
$ for $i=1,\dots,N$.
We have the following theorem.
\begin{theorem}[{Theorem \ref{thm:Kinvariant}}]
Suppose that $A$ is an $N\times N$ irreducible, non-permutation matrix
with entries in $\{0,1\}$.
 The position $[E_A]$ of the projection $E_A$ in 
$K_0(\OTA)\otimes K_0(\OA)$ is 
$\sum_{i=1}^N[e_i]\otimes [e_i]$ in the group
$\Z^N/(\id-A)\Z^N \otimes
\Z^N/(\id-A^t)\Z^N.
$
Hence
it is invariant under topological conjugacy of the two-sided topological Markov shift
$(\bar{X}_A,\bar{\sigma}_A)$.
\end{theorem}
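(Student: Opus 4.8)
The plan is to reduce the computation to two routine ingredients: an explicit description of the corner projection $E_A$ under the isomorphism $\WRA\cong E_A(\OTA\otimes\OA)E_A$, and the multiplicativity of the external (K\"unneth) product in $K$-theory. Writing $S_1,\dots,S_N$ and $T_1,\dots,T_N$ for the canonical generating partial isometries of $\OA$ and $\OTA=\mathcal{O}_{A^t}$ respectively, I would first establish that under this isomorphism
\[
E_A=\sum_{i=1}^N T_iT_i^*\otimes S_iS_i^*.
\]
The guiding picture is that the unit space of $G_A^{s,u}\rtimes\Z^2$ is $\bar X_A$, and splitting a bi-infinite admissible sequence $(x_i)_{i\in\Z}$ at coordinate $0$ records a future $(x_i)_{i\ge 0}$ (governed by $\OA$) together with a past $(x_{-i})_{i\ge 0}$ (governed by $\OTA$), the two being constrained to share the symbol $x_0$; the cylinder projection onto $\{x_0=i\}$ is $S_iS_i^*$ on the future side and $T_iT_i^*$ on the past side, so $E_A$ is the sum over $i$ of their tensor products. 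Distilling this form of $E_A$ from the groupoid/partial-isometry description of the isomorphism in \cite{MaPre2017} is the one step demanding genuine bookkeeping; the remaining $K$-theory is short and standard.

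Granting the formula for $E_A$, I would observe that its summands are mutually orthogonal projections, since $T_iT_i^*T_jT_j^*=\delta_{ij}T_iT_i^*$ in $\OTA$ and $S_iS_i^*S_jS_j^*=\delta_{ij}S_iS_i^*$ in $\OA$; hence $E_A$ is a projection and $[E_A]=\sum_{i=1}^N[T_iT_i^*\otimes S_iS_i^*]$ in $K_0(\OTA\otimes\OA)$. Because each summand is an elementary tensor of projections, its class is the image under the external product map $K_0(\OTA)\otimes K_0(\OA)\to K_0(\OTA\otimes\OA)$ of $[T_iT_i^*]\otimes[S_iS_i^*]$; this is exactly why $[E_A]$ sits in the subgroup $K_0(\OTA)\otimes K_0(\OA)$ noted before the statement. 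Since that map is injective (it is the first arrow of the K\"unneth short exact sequence) and additive, $[E_A]$ corresponds to $\sum_{i=1}^N[T_iT_i^*]\otimes[S_iS_i^*]$ in $K_0(\OTA)\otimes K_0(\OA)$.

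It then suffices to insert the standard Cuntz--Krieger identification of generators. Under $K_0(\OA)\cong\Z^N/(\id-A^t)\Z^N$ the range projection $S_iS_i^*$ represents $[e_i]$: indeed $[S_iS_i^*]=[S_i^*S_i]=\sum_j A(i,j)[S_jS_j^*]$, which is precisely the defining relation $e_i\equiv\sum_j A(i,j)e_j$ of the cokernel $\Z^N/(\id-A^t)\Z^N$. Running the same argument for the matrix $A^t$ gives $[T_iT_i^*]=[e_i]$ in $K_0(\OTA)\cong\Z^N/(\id-A)\Z^N$, and substituting produces $[E_A]=\sum_{i=1}^N[e_i]\otimes[e_i]$. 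The invariance statement is then immediate from Theorem \ref{thm:1.1}: an isomorphism $\WRA\to\WRB$ as in condition (vi) is unital and therefore sends $[1_{\WRA}]$ to $[1_{\WRB}]$, so under the identifications $(K_0(\WRA),[1_{\WRA}])\cong(K_0(\OTA\otimes\OA),[E_A])$ recorded above, the pair $\big(K_0(\OTA)\otimes K_0(\OA),\ \sum_{i=1}^N[e_i]\otimes[e_i]\big)$ is preserved, up to the induced isomorphism of $K$-groups, under topological conjugacy of $(\bar X_A,\bar\sigma_A)$.
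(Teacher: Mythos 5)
Your computation of the class $[E_A]$ is essentially correct, but the operator identity you propose to establish first is not. By definition \eqref{eq:EA}, $E_A=\sum_{i=1}^N T_i^*T_i\otimes S_iS_i^*$, which by the Cuntz--Krieger relations equals $\sum_{i,j=1}^N A(j,i)\,T_jT_j^*\otimes S_iS_i^*$ and also $\sum_{i=1}^N T_iT_i^*\otimes S_i^*S_i$; it equals your diagonal sum $\sum_{i=1}^N T_iT_i^*\otimes S_iS_i^*$ only when $A=\id$, a permutation matrix excluded by hypothesis. (In your heuristic picture, the past-side projection compatible with the future symbol $x_0=i$ is the source projection $T_i^*T_i$, not the range projection $T_iT_i^*$.) The slip is harmless for $K$-theory: $T_i^*T_i$ and $T_iT_i^*$ are Murray--von Neumann equivalent, so $[T_i^*T_i]=[T_iT_i^*]$ in $K_0(\OTA)$, and your orthogonality argument, the injectivity of the K\"unneth map, and the identification $[S_iS_i^*]\mapsto[e_i]$ then give $[E_A]=\sum_{i=1}^N[e_i]\otimes[e_i]$, in agreement with the discussion preceding the theorem in the paper.

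The genuine gap is in the invariance assertion, which is exactly the part to which the paper devotes its whole proof. From Theorem \ref{thm:main}(vi) and Proposition \ref{prop:KunitEA} you obtain an isomorphism of pairs $(K_0(\OTA\otimes\OA),[E_A])\cong(K_0(\OTB\otimes\OB),[E_B])$ induced by a $C^*$-isomorphism, but the theorem claims invariance of the position of $e_A$ inside the K\"unneth summand $K_0(\OTA)\otimes K_0(\OA)\cong\Z^N/(\id-A)\Z^N\otimes\Z^N/(\id-A^t)\Z^N$. For that you must know that the induced map on $K_0$ carries $K_0(\OTA)\otimes K_0(\OA)$ onto $K_0(\OTB)\otimes K_0(\OB)$. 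The K\"unneth decomposition is natural for tensor products of morphisms, not for arbitrary isomorphisms of the tensor-product algebra, and this subgroup is not intrinsically characterized: writing $\Z^n$ for the free part of $K_0(\OA)$, both $K_0(\OTA)\otimes K_0(\OA)$ and $K_1(\OTA)\otimes K_1(\OA)$ contribute free summands of rank $n^2$ to $K_0(\OTA\otimes\OA)$, so an abstract automorphism can mix them. This is precisely why the paper proves the theorem by a matrix method: Williams' theorem \cite{Williams} converts topological conjugacy into elementary strong shift equivalences $A=CD$, $B=DC$; then \cite[Theorem 4.6]{MaDocMath2017} realizes the induced isomorphisms on $K_0(\OA)$ and on $K_0(\OTA)$ factorwise as the matrix multiplications $m_{C^t}$ and $m_D$; and the computation $\sum_{i=1}^N De_i\otimes C^te_i-\sum_{j=1}^M f_j\otimes f_j=\sum_{j=1}^M(B-\id)f_j\otimes f_j$ shows $(m_D\otimes m_{C^t})(e_A)=e_B$. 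The resulting isomorphism is of tensor form by construction, which is what makes the refined pair a well-defined invariant. Your argument would need either this route, or a separate proof that the isomorphism of Theorem \ref{thm:main}(vi) (for instance, exploiting its $\T^2$-equivariance) induces a map on $K$-theory respecting the K\"unneth decomposition; as written, your final sentence does not follow from unitality alone.
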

We put
$$
e_A=\sum_{i=1}^N[e_i]\otimes [e_i] \quad\text{ in }
\Z^N/(\id-A)\Z^N \otimes
\Z^N/(\id-A^t)\Z^N.
$$
We will actually see that the pair
$(\Z^N/(\id-A)\Z^N \otimes
\Z^N/(\id-A^t)\Z^N, e_A)
$
is a shift equivalence invariant
(Proposition \ref{prop:SEinvariant}, Proposition \ref{prop:NNSEinvariant}).
We will present an example of matrices 
$A=[A(i,j)]_{i,j=1}^N, B=[B(i,j)]_{i,j=1}^M$ such that 
$K_0(\OA) \cong K_0(\OB), \det(\id-A) = \det(\id - B)$,
but the invariants
$(\Z^N/(\id-A)\Z^N \otimes
\Z^N/(\id-A^t)\Z^N, e_A)
$
and
$
(\Z^M/(\id-B)\Z^M \otimes
\Z^M/(\id-B^t)\Z^M, e_B)
$
are different (Proposition \ref{prop:example}).
This shows that the invariant 
$(\Z^N/(\id-A)\Z^N \otimes\Z^N/(\id-A^t)\Z^N, e_A)$
is strictly stronger than the Bowen-Franks group 
$\Z^N/(\id-A)\Z^N$
and not invariant under flow equivalence.

J. Cuntz in \cite{Cu4} 
studied the homotopy groups 
$\pi_n(\End(\OA\otimes\K))$
of the space $\End(\OA\otimes\K)$ of endomorphisms
of the $C^*$-algebras $\OA\otimes\K.$
He proved that  natural maps 
$\epsilon_n : \pi_n(\End(\OA\otimes\K))\longrightarrow KK^n(\OA,\OA)$ 
yield  isomorphisms,
and
defined an element denoted by $\epsilon_1(\lambda^A)$
in $\Ext(\OA)\otimes K_0(\OA),$ where
$\lambda^A$ denotes the gauge action $\alpha^A$ on $\OA.$
His observation shows that the element $\epsilon_1(\lambda^A)$
is noting but the above element $e_A$ under the natural identification
between 
$\Ext(\OA)\otimes K_0(\OA)$
and
$\Z^N/(\id-A)\Z^N \otimes\Z^N/(\id-A^t)\Z^N.$
He already states in \cite{Cu4} that 
the position $\epsilon_1(\lambda^A)$ in 
$\Z^N/(\id-A)\Z^N \otimes\Z^N/(\id-A^t)\Z^N$
is invariant under topological conjugacy of the topological Markov shift
$(\bar{X}_A,\bar{\sigma}_A)$.

We will finally study  that 
KMS states for the diagonal action 
$\delta^A_t =\gamma^A_{(t,t)}$ on $\WRA$,
and prove the following theorem.
\begin{theorem}[{Theorem \ref{thm:KMS}}]
Assume that the matrix $A$ is aperiodic.
A KMS state on $\WRA$ for the action $\delta^A$ 
at the inverse temperature $\log \gamma$
exists if and only if $\gamma$ is the Perron--Frobenius eigenvalue $\beta$ of $A$.
The admitted KMS state is unique.
The restriction of the admitted KMS state to the subsalgebra $C(\bar{X}_A)$
is the state defined by the Parry measure on $\bar{X}_A$.
\end{theorem}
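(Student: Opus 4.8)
The plan is to analyze KMS states through the circle-action structure and the realization $\WRA \cong E_A(\OTA\otimes\OA)E_A$, reducing the problem to the classical computation of KMS states for the gauge actions on the Cuntz--Krieger algebras $\OA$ and $\OTA$. First I would record two general facts. A KMS state $\phi$ for the flow $s\mapsto\delta^A_s$ is invariant under that flow, so it factors through the canonical conditional expectation $\E\colon\WRA\to(\WRA)^{\delta^A}=\RA$ onto the fixed-point algebra; in particular $\phi$ annihilates every element of nonzero diagonal degree, and its restriction to $\RA$ is a tracial state (for $a\in\RA$ one has $\delta^A_z(a)=a$ for all $z\in\mathbb{C}$, so the KMS identity becomes $\phi(ab)=\phi(ba)$). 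Thus $\phi$ is completely determined by the tracial data on the degree-zero part together with the way the KMS identity couples the degree $\pm1$ spectral subspaces.

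Next I would extract the eigenvalue equations. Since $E_A$ is fixed by $\delta^A$ and $\delta^A_{i\log\gamma}$ scales a homogeneous element of diagonal degree $n$ by $\gamma^{-n}$, applying the KMS identity $\phi(vv^*)=\gamma^{-1}\phi(v^*v)$ to the degree $+1$ partial isometries coming from the generators $S_i\otimes 1$ of the $\OTA$-factor and $1\otimes T_j$ of the $\OA$-factor (compressed by $E_A$) turns the Cuntz--Krieger relations $S_i^*S_i=\sum_j A^t(i,j)S_jS_j^*$ and $T_j^*T_j=\sum_k A(j,k)T_kT_k^*$ into the two eigenvector equations $A^t\eta=\gamma\eta$ and $A\xi=\gamma\xi$, where $\eta_i,\xi_j$ record the values of $\phi$ on the corresponding range projections in $C(\bar{X}_A)$. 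Positivity and normalization of $\phi$ force $\eta,\xi$ to be strictly positive; by the Perron--Frobenius theorem (using aperiodicity of $A$, which makes the spectral radius a simple eigenvalue with a one-dimensional positive eigenspace) this is possible only for $\gamma=\beta$, and then $\eta,\xi$ are the left and right Perron--Frobenius eigenvectors. Reading off the value of $\phi$ on a cylinder set determined by coordinates $x_{-m},\dots,x_n$ then reproduces exactly the Parry weight $\eta_{x_{-m}}\xi_{x_n}\beta^{-(m+n)}$, identifying $\phi|_{C(\bar{X}_A)}$ with the Parry measure.

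For existence I would run this computation in reverse: the gauge actions $\alpha^{A^t}$ on $\OTA$ and $\alpha^A$ on $\OA$ each carry a unique KMS state at inverse temperature $\log\beta$ (both at the same temperature because $A$ and $A^t$ share the Perron--Frobenius eigenvalue $\beta$), their spatial tensor product is a KMS state on $\OTA\otimes\OA$ for the diagonal flow, and compression by the invariant projection $E_A$ (which has nonzero value under this state) yields a KMS state on $\WRA$ whose restriction to $C(\bar{X}_A)$ is the Parry measure. Uniqueness then follows because the KMS identity, together with $\phi=\phi\circ\E$ and the Perron--Frobenius normalization, determines $\phi$ on every compressed monomial $E_A(S_\mu S_{\mu'}^*\otimes T_\nu T_{\nu'}^*)E_A$, and these span a dense subalgebra; since the Perron--Frobenius data is unique under aperiodicity, so is $\phi$.

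The main obstacle I anticipate is not the eigenvalue bookkeeping but the passage from the diagonal data to a statement about the whole algebra: one must check that the compressed generators really do generate $\WRA$ with the stated degrees, that $\phi$ is forced to vanish on all nonzero-degree spectral subspaces (not merely on the generators), and that the values prescribed on the dense spanning set are consistent and positive, i.e. actually define a state. This is precisely the step where aperiodicity (equivalently, mixing of $(\bar{X}_A,\bar{\sigma}_A)$) is indispensable: it guarantees both the simplicity of the Perron--Frobenius eigenvalue needed for uniqueness and the irreducibility that rules out KMS states at other temperatures. As an alternative route, the same conclusion can be reached groupoid-theoretically: KMS states for $\delta^A$ at inverse temperature $\log\gamma$ correspond to probability measures on the unit space $\bar{X}_A$ that are quasi-invariant for $G_A^{s,u}\rtimes\Z^2$ with Radon--Nikodym cocycle $e^{-(\log\gamma)\,c}$, where $c(x,p,q,z)=p+q$; the quasi-invariance relation again forces the Perron--Frobenius eigenvalue and singles out the Parry measure as the unique solution when $A$ is aperiodic.
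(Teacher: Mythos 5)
Your proposal follows essentially the same route as the paper for two of the three assertions: existence is obtained exactly as in Proposition \ref{prop:kms}, by normalizing the tensor product $\varphi^t\otimes\varphi$ of the two Enomoto--Fujii--Watatani KMS states and compressing by the $\delta^A$-invariant projection $E_A$; and the temperature is pinned down, as in Lemmas \ref{lem:6.8}--\ref{lem:6.10}, by showing that the values of the state on suitable range projections form a strictly positive eigenvector of $A$ with eigenvalue $\gamma$, so that Perron--Frobenius forces $\gamma=\beta$. Your reduction of the problem to the fixed-point algebra via flow-invariance of KMS states is also legitimate (the paper proves this directly in Lemma \ref{lem:6.2} rather than citing the standard invariance fact, but either works), and your remark that the restriction to $\RA$ is tracial matches Lemma \ref{lem:6.3}.

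The genuine gap is in uniqueness. Flow-invariance gives $\psi=\psi\circ\mathcal{E}_A$, i.e.\ vanishing on the spectral subspaces of nonzero diagonal degree, but the fixed-point algebra $\RA=(\WRA)^{\delta^A}$ is far larger than $C(\bar{X}_A)$: it is densely spanned by the monomials $T_{\bar{\xi}}T_{\bar{\eta}}^*\otimes S_\mu S_\nu^*$ with $k-l=n-m$, almost all of which are \emph{off-diagonal} ($\bar{\xi}\neq\bar{\eta}$ or $\mu\neq\nu$) yet have degree zero. Neither flow-invariance nor traciality of $\psi|_{\RA}$ forces $\psi$ to vanish on these, and a state on $\RA$ is \emph{not} determined by its values on cylinder-set projections unless one first knows it is supported on the diagonal subalgebra $C(\bar{X}_A)$. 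Your sentence claiming that the KMS identity together with $\psi=\psi\circ\mathcal{E}_A$ and the Perron--Frobenius normalization ``determines $\phi$ on every compressed monomial'' is precisely the assertion that needs proof, and it is the one obstacle your obstacle-paragraph does not list (vanishing on \emph{nonzero}-degree subspaces, which you do list, is the easy part). In the paper this is the technical core of the uniqueness argument: one first proves that $\psi$ is $\Ad(U_A)$-invariant for the shift unitary $U_A=\sum_{i=1}^N T_i^*\otimes S_i$ (Lemma \ref{lem:6.4}), and then runs a delicate induction alternating the KMS condition with this shift-invariance to conclude that $\psi(T_{\bar{\xi}}T_{\bar{\eta}}^*\otimes S_\mu S_\nu^*)\neq 0$ forces $\bar{\xi}=\bar{\eta}$ and $\mu=\nu$ (Lemma \ref{lem:6.5} and the proposition following it); only after that do the eigenvector computations on $C(\bar{X}_A)$ determine $\psi$ completely and identify it with the Parry state. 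Without this step the uniqueness argument is circular. Your alternative groupoid route has a parallel gap: $G_A^{s,u}\rtimes\Z^2$ is not principal (points whose orbits are asymptotically periodic carry nontrivial isotropy), so KMS states correspond to quasi-invariant measures \emph{together with} fields of states on the isotropy groups, and ruling out the extra isotropy data is again the groupoid incarnation of the off-diagonal vanishing.
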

The Parry measure is the measure of maximal entropy
(cf. \cite{Walter}).
Since $\log \beta$ is the topological entropy of the Markov shift
$(\bar{X}_A,\bar{\sigma}_A)$, the inverse temperature expresses the entropy.
This exactly corresponds to the result obtained 
by Enomoto--Fujii--Watatani in \cite{EFW1984} 
on KMS states for the gauge action on the Cuntz--Krieger algebras 
$\OA$. 

Throughout the paper, we denote by $\Zp$ the set of nonnegative integers
and by $\N$ the set of positive integers.

This paper is a continuation of the paper \cite{MaPre2017}.
\section{Preliminaries}
We fix an irreducible, non-permutation matrix $A=[A(i,j)]_{i,j=1}^N$
with entries in $\{0,1\}$.
Let $\OA$ and $\OTA$
be the Cuntz--Krieger algebras 
for the matrices $A$ and its transpose $A^t$, respectively.
We may take generating partial isometries
$S_i, i=1,\dots,N$ of $\OA$
and
$T_i, i=1,\dots,M$ of $\OTA$
such that 
\begin{gather}
\sum_{i=1}^N S_i S_i^* =1, \qquad 
S_i^* S_i = \sum_{j=1}^N A(i,j)S_j S_j^*, \label{eq:CKA}\\
\sum_{i=1}^N T_i T_i^* =1, \qquad 
T_i^* T_i = \sum_{j=1}^N A^t(i,j)T_j T_j^*.\label{eq:CKTA}
\end{gather}
In the $C^*$-algebra 
$\OTA\otimes\OA$ of tensor product,
let us denote by 
$E_A$ the projection defined by
\begin{equation}
E_A = \sum_{i=1}^N T_i^* T_i\otimes S_i S_i^*.  \label{eq:EA}
\end{equation}
By using the relations \eqref{eq:CKA} and \eqref{eq:CKTA},
it is easy to see that 
$E_A = \sum_{i=1}^N T_i T_i^*\otimes S_i^* S_i.$
The $C^*$-algebra $\widetilde{R}_A$ is defined as the groupoid $C^*$-algebra
$C^*(G_A^{s,u}\rtimes\Z^2)$ which is realized as the $C^*$-algebra (\cite{MaPre2017}) 
\begin{equation*}
\WRA =E_A(\OTA\otimes\OA)E_A. 
\end{equation*}
The $C^*$-algebra $\WRA$ was denoted by $\R^{s,u}_A$ in \cite{MaPre2017}.
Since both the algebras
$\OTA, \OA$
are simple and purely infinite,
and
$\WRA\otimes\K$ is isomorphic to $\OTA\otimes\OA\otimes\K$,
the $C^*$-algebra $\WRA$ is simple and purely infinite
if $A$ is irreducible and non-permutation (cf. \cite[Proposition 5.5]{PutSp}).

Let $B_n(\bar{X}_A)$ be the set of admissible words in $\bar{X}_A$
of length $n$.
We set $B_*(\bar{X}_A) =\cup_{n=0}^\infty B_n(\bar{X}_A),$
where $B_0(\bar{X}_A)$ denotes the empty word.  
For a word $\xi=(\xi_1,\dots,\xi_k), \mu=(\mu_1,\dots,\mu_m) \in B_*(\bar{X}_A)$,
we denote by
$\bar{\xi} =(\xi_k,\dots,\xi_1)\in B_k(\bar{X}_{A^t})$ and set
$T_{\bar{\xi}} =T_{\xi_k}\cdots T_{\xi_1}$ and 
$S_\mu=S_{\mu_1}\cdots S_{\mu_m}.$
Let $\alpha^A, \alpha^{A^t}$ be the gauge actions of $\OA$ and $\OTA$, respectively,
which are defined by
\begin{equation*}
\alpha^A_t(S_i) = \exp(\sqrt{-1}t) S_i, \qquad
\alpha^{A^t}_t(T_i) = \exp(\sqrt{-1}t) T_i, \qquad i=1,\dots,N,\, 
t \in {\mathbb{R}}/2\pi\Z =\T.
\end{equation*}
The fixed point algebras
$(\OA)^{\alpha^A},(\OTA)^{\alpha^{A^t}}
$
of $\OA, \OTA$
under the gauge actions $\alpha^A, \alpha^{A^t}$
are known to be AF-algebras, which are denoted by
$\FA, \FTA$, respectively.

We first note the following facts which were seen in \cite{MaPre2017}.
\begin{proposition}\label{prop:2.1}\hspace{6cm}
\begin{enumerate}
\renewcommand{\theenumi}{\roman{enumi}}
\renewcommand{\labelenumi}{\textup{(\theenumi)}}
\item
The groupoid $C^*$-algebra $C^*(G_A^a)$ of the groupoid
$G_A^a$ is isomorphic to 
the $C^*$-subalgebra of $\FTA\otimes\FA$ 
defined by
\begin{align*}
C^*(\quad &T_{\bar{\xi}}T_{\bar{\eta}}^*\otimes S_\mu S_\nu^* \in \OTA\otimes\OA \mid \\
& \mu =(\mu_1,\dots,\mu_m), \nu =(\nu_1,\dots,\nu_n) \in B_*(\bar{X}_A),\\
& \bar{\xi} =(\xi_k,\dots,\xi_1), \bar{\eta} =(\eta_l,\dots,\eta_1) \in B_*(\bar{X}_{A^t}),\\ 
& A(\xi_k,\mu_1) = A(\eta_l,\nu_1) =1, k=l, m=n \quad ).
\end{align*}
\item
The $C^*$-algebra $\R_A$ is isomorphic to 
the $C^*$-subalgebra of $\OTA\otimes\OA$ 
defined by
\begin{align*}
C^*(\quad &T_{\bar{\xi}}T_{\bar{\eta}}^*\otimes S_\mu S_\nu^* \in \OTA\otimes\OA \mid \\
& \mu =(\mu_1,\dots,\mu_m), \nu =(\nu_1,\dots,\nu_n) \in B_*(\bar{X}_A),\\
& \bar{\xi} =(\xi_k,\dots,\xi_1), \bar{\eta} =(\eta_l,\dots,\eta_1) \in B_*(\bar{X}_{A^t}),\\ 
& A(\xi_k,\mu_1) = A(\eta_l,\nu_1) =1, \, k-l = n-m \quad).
\end{align*}
\item
The $C^*$-algebra $\widetilde{\R}_A$ is isomorphic to 
the $C^*$-subalgebra of $\OTA\otimes\OA$ 
defined by
\begin{align*}
C^*(\quad &T_{\bar{\xi}}T_{\bar{\eta}}^*\otimes S_\mu S_\nu^* \in \OTA\otimes\OA \mid \\
& \mu =(\mu_1,\dots,\mu_m), \nu =(\nu_1,\dots,\nu_n) \in B_*(\bar{X}_A),\\
& \bar{\xi} =(\xi_k,\dots,\xi_1), \bar{\eta} =(\eta_l,\dots,\eta_1) \in B_*(\bar{X}_{A^t}),\\ 
& A(\xi_k,\mu_1) = A(\eta_l,\nu_1) =1\quad ).
\end{align*}
\end{enumerate}
\end{proposition}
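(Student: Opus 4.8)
The plan is to establish (iii) directly by a compression computation inside $\OTA\otimes\OA$, using the already-recorded realization $\WRA=E_A(\OTA\otimes\OA)E_A$, and then to read off (i) and (ii) from (iii) as the fixed-point algebras of the gauge action $\gamma^A$ and of its diagonal restriction $\delta^A$, identifying the resulting graded subalgebras with the groupoid algebras $C^*(G_A^a)$ and $\RA$ by means of the facts of \cite{MaPre2017}.

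For (iii) I would start from the standard Cuntz--Krieger description $\OA=\overline{\operatorname{span}}\{S_\mu S_\nu^*\}$ and $\OTA=\overline{\operatorname{span}}\{T_{\bar\xi}T_{\bar\eta}^*\}$ over admissible words, so that the elementary tensors $X_{\xi,\eta,\mu,\nu}:=T_{\bar\xi}T_{\bar\eta}^*\otimes S_\mu S_\nu^*$ span $\OTA\otimes\OA$ densely. The key step is to compress such a tensor by $E_A$. Using \eqref{eq:CKTA} together with $T_jT_j^*T_{\xi_k}=\delta_{j,\xi_k}T_{\xi_k}$ one gets $T_i^*T_iT_{\bar\xi}=A^t(i,\xi_k)T_{\bar\xi}=A(\xi_k,i)T_{\bar\xi}$, while \eqref{eq:CKA} gives $S_iS_i^*S_\mu=\delta_{i,\mu_1}S_\mu$; substituting these into $E_A=\sum_iT_i^*T_i\otimes S_iS_i^*$ from \eqref{eq:EA} yields
\begin{equation*}
E_A\,X_{\xi,\eta,\mu,\nu}=A(\xi_k,\mu_1)\,X_{\xi,\eta,\mu,\nu}.
\end{equation*}
Taking adjoints and using the second form $E_A=\sum_iT_iT_i^*\otimes S_i^*S_i$ gives the symmetric identity $X_{\xi,\eta,\mu,\nu}E_A=A(\eta_l,\nu_1)X_{\xi,\eta,\mu,\nu}$, whence
\begin{equation*}
E_A\,X_{\xi,\eta,\mu,\nu}\,E_A=A(\xi_k,\mu_1)A(\eta_l,\nu_1)\,X_{\xi,\eta,\mu,\nu}.
\end{equation*}
Because the entries of $A$ are $0$ or $1$, this shows that $E_A(\cdot)E_A$ fixes $X_{\xi,\eta,\mu,\nu}$ exactly when $A(\xi_k,\mu_1)=A(\eta_l,\nu_1)=1$ and annihilates every other elementary tensor. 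Since $E_A(\cdot)E_A$ is continuous and the $X_{\xi,\eta,\mu,\nu}$ span densely, the corner $\WRA=E_A(\OTA\otimes\OA)E_A$ is the closed linear span of the admissible tensors; being a corner it is itself a $C^*$-algebra, so this closed span coincides with the $C^*$-algebra generated by those tensors, which is the description in (iii).

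To obtain (i) and (ii) I would record the gauge gradings: writing $k=|\xi|,l=|\eta|,m=|\mu|,n=|\nu|$, the action of $\gamma^A=\alpha^{A^t}\otimes\alpha^A$ scales $X_{\xi,\eta,\mu,\nu}$ by $e^{\sqrt{-1}((k-l)r+(m-n)s)}$. The conditional expectation $y\mapsto\iint_{\T^2}\gamma^A_{(r,s)}(y)\,dr\,ds$ maps $\WRA$ onto $(\WRA)^{\gamma^A}$ and sends an admissible tensor to itself when $k=l$ and $m=n$ and to $0$ otherwise; hence $(\WRA)^{\gamma^A}$ is the closed span of the admissible tensors with $k=l,\,m=n$, which moreover lie in $\FTA\otimes\FA$ — exactly the generating family in (i). Integrating instead the diagonal action $\delta^A_t=\gamma^A_{(t,t)}$ shows that $(\WRA)^{\delta^A}$ is the closed span of the admissible tensors with $(k-l)+(m-n)=0$, i.e. $k-l=n-m$, which is the family in (ii). It remains to identify these fixed-point algebras with the named groupoid algebras: $(\WRA)^{\delta^A}\cong\RA$ is already available from \cite{MaPre2017}, and the equality $(\WRA)^{\gamma^A}=C^*(G_A^a)$ follows because the $\Z^2$-valued cocycle $c_A$ inducing $\gamma^A$ has kernel $c_A^{-1}(0,0)=\{(x,0,0,z):(x,z)\in G_A^a\}\cong G_A^a$, and the fixed-point algebra of a cocycle gauge action is the $C^*$-algebra of the kernel subgroupoid.

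The genuinely computational content is the compression identity, and the only care needed there is the index bookkeeping with the Cuntz--Krieger relations; the passage to $C^*(\cdots)$ is free once one notes that the closed span equals the corner and is therefore already a $C^*$-algebra. I expect the main obstacle to lie not in the computation but in the structural identification of the graded subalgebras with $C^*(G_A^a)$ and $\RA$ as abstract \'etale-groupoid algebras, namely that $\gamma^A$ and $\delta^A$ really are the gauge actions dual to the cocycles $c_A$ and $d_A$ and that the kernel subgroupoids are the asymptotic and asymptotic-times-$\Z$ groupoids; this is precisely the point at which one leans on \cite{MaPre2017} rather than arguing from scratch.
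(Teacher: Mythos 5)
Your proposal is correct, and the computations check out: the compression identity $E_A\,(T_{\bar{\xi}}T_{\bar{\eta}}^*\otimes S_\mu S_\nu^*)\,E_A=A(\xi_k,\mu_1)A(\eta_l,\nu_1)\,T_{\bar{\xi}}T_{\bar{\eta}}^*\otimes S_\mu S_\nu^*$ is right, and so are the spectral computations identifying the $\gamma^A$- and $\delta^A$-fixed parts of the corner with the generating families in (i) and (ii). The comparison with the paper is lopsided, because the paper does not prove Proposition \ref{prop:2.1} at all: it presents all three statements as facts quoted from \cite{MaPre2017}. Your route is therefore genuinely different in character: it reconstructs the proposition from a smaller list of imported ingredients, namely the corner realization $\WRA=E_A(\OTA\otimes\OA)E_A$, the identification $(\WRA)^{\delta^A}\cong\RA$, and the principle that the fixed-point algebra of the gauge action dual to a continuous cocycle on an amenable \'etale groupoid algebra is the algebra of the kernel subgroupoid, applied to $c_A^{-1}(0,0)\cong G_A^a$. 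What this buys is that (i)--(iii), and as a by-product the unlabeled lemma following Proposition \ref{prop:2.1} (membership of $T_{\bar{\xi}}T_{\bar{\eta}}^*\otimes S_\mu S_\nu^*$ in $\RA$ iff $k-l=n-m$), become transparent consequences of one algebraic computation plus averaging over the torus; what it does not buy is independence from \cite{MaPre2017}, since the structural identifications remain imported. One sourcing caution: for (ii) you must cite $(\WRA)^{\delta^A}\cong\RA$ as \cite[Theorem 9.6]{MaPre2017} (as you do) and not as the proposition stated after Proposition \ref{prop:2.1} in this paper, since there it is deduced \emph{from} Proposition \ref{prop:2.1}, and quoting it internally would make your argument circular.

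There is one slip to repair, though it is not fatal. You assert that $E_A(\cdot)E_A$ ``annihilates every other elementary tensor,'' but this is false for elementary tensors involving empty words, which your spanning family includes: for instance $E_A(1\otimes 1)E_A=E_A\neq 0$, and in general such tensors are neither fixed nor killed but are spread into finite sums of admissible tensors. The repair is routine: using $\sum_{i}S_iS_i^*=1$, every product $S_\mu S_\nu^*$ with an empty word is a finite sum of products with both words nonempty (e.g. $S_\nu^*=\sum_{i}A(\nu_n,i)\,S_iS_{\nu i}^*$), and likewise in $\OTA$; hence the elementary tensors with all four words nonempty already span $\OTA\otimes\OA$ densely, and on that family your identity applies verbatim. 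With that adjustment the argument for (iii), and hence for (i) and (ii), is complete.
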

We note that for $i=1,\dots,N$ the identity
\begin{equation*}
T_i \otimes S_i^* 
=
\sum_{j,k=1}^N T_i T_j T_j^* \otimes S_k S_k^*S_i^*
=
\sum_{j,k=1}^N A(j,i)A(i,k) T_{ij} T_j^* \otimes S_k S_{ik}^*
\end{equation*}
holds.
Since $A(j,i)A(i,k) T_{ij} T_j^* \otimes S_k S_{ik}^*$ belongs to $\RA$,
we see that $T_i \otimes S_i^* $ and hence 
$T_i^* \otimes S_i $ belong to $\RA$.

Define the diagonal action $\delta^A$ on $\widetilde{R}_A$
by setting
\begin{equation*}
\delta^A_t = \alpha^{A^t}_t\otimes \alpha^A_t, \qquad t \in {\mathbb{R}}/2\pi\Z =\T.
\end{equation*}
Since $\delta^A_t(E_A) = E_A$,
the automorphisms
$\delta^A_t, t \in \T$ 
define an action of $\T$ on $\WRA$.
For 
\begin{gather*}
 \mu =(\mu_1,\dots,\mu_m), \quad \nu =(\nu_1,\dots,\nu_n) \in B_*(\bar{X}_A),\\
 \bar{\xi} =(\xi_k,\dots,\xi_1), \quad \bar{\eta} =(\eta_l,\dots,\eta_1) \in B_*(\bar{X}_{A^t})
\end{gather*}
satisfying
$ 
 A(\xi_k,\mu_1) = A(\eta_l,\nu_1) =1,
$
we see that 
\begin{equation*}
\delta^A_t(T_{\bar{\xi}}T_{\bar{\eta}}^*\otimes S_\mu S_\nu^*)
= \exp(\sqrt{-1}(k-l+m-n)t) T_{\bar{\xi}}T_{\bar{\eta}}^*\otimes S_\mu S_\nu^* 
\end{equation*}
so that the following lemma holds.
\begin{lemma}
Keep the above notation.
The element
$T_{\bar{\xi}}T_{\bar{\eta}}^*\otimes S_\mu S_\nu^*$ in $\WRA$ belongs to $\RA$
if and only if $k-l = n-m$.
\end{lemma}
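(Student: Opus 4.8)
The plan is to read off the claim from the diagonal action $\delta^A$, using the inclusion of $\RA$ into the fixed-point algebra $(\WRA)^{\delta^A}$. The computation recorded just before the lemma shows that $\delta^A$ scales the element $w := T_{\bar{\xi}}T_{\bar{\eta}}^*\otimes S_\mu S_\nu^*$ by $\exp(\sqrt{-1}(k-l+m-n)t)$, so $w$ is fixed by $\delta^A$ precisely when $k-l+m-n=0$, i.e. when $k-l=n-m$. The whole lemma is then a matter of turning ``fixed by $\delta^A$'' into ``lies in $\RA$''.

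For the implication $k-l=n-m \Rightarrow w \in \RA$ there is nothing to prove beyond unwinding definitions: under this numerical condition $w$ is literally one of the generators listed in Proposition \ref{prop:2.1}(ii), and that list is the generating set of $\RA$ viewed inside $\OTA\otimes\OA$. For the converse I would introduce the canonical $\delta^A$-averaging map $E(x)=\int_\T \delta^A_t(x)\,dt$, a conditional expectation of $\WRA$ onto its fixed-point algebra; since every generator of $\RA$ in Proposition \ref{prop:2.1}(ii) satisfies $k-l+m-n=0$ and is therefore $\delta^A$-fixed, $E$ restricts to the identity on $\RA$. Applying $E$ to $w$ gives $E(w)=\big(\int_\T\exp(\sqrt{-1}(k-l+m-n)t)\,dt\big)\,w$, which is $w$ when $k-l+m-n=0$ and $0$ otherwise. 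Hence if $w\in\RA$ and $w\neq0$ we must have $k-l+m-n=0$, that is $k-l=n-m$.

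The only point requiring care is the standing assumption $w\neq 0$, without which a vanishing generator would lie trivially in $\RA$ for any values of $k,l,m,n$. This is where the admissibility hypotheses enter: the conditions that $\mu,\nu\in B_*(\bar{X}_A)$, $\bar{\xi},\bar{\eta}\in B_*(\bar{X}_{A^t})$ and $A(\xi_k,\mu_1)=A(\eta_l,\nu_1)=1$ ensure, via the Cuntz--Krieger relations \eqref{eq:CKA} and \eqref{eq:CKTA}, that the partial-isometry products $T_{\bar{\xi}}T_{\bar{\eta}}^*$ and $S_\mu S_\nu^*$ are nonzero, so $w\neq 0$. I expect no genuine obstacle here --- only this bookkeeping and the already-cited identification $\RA=(\WRA)^{\delta^A}$ from \cite{MaPre2017}, of which the present argument really needs only the easy inclusion $\RA\subseteq(\WRA)^{\delta^A}$.
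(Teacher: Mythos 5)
Your route is exactly the paper's own: the paper treats this lemma as an immediate consequence of the displayed computation $\delta^A_t(w)=\exp(\sqrt{-1}(k-l+m-n)t)\,w$ for $w=T_{\bar{\xi}}T_{\bar{\eta}}^*\otimes S_\mu S_\nu^*$, combined with the generator description of $\RA$ in Proposition \ref{prop:2.1}(ii), and offers no further argument. Your two directions --- membership of $w$ as a listed generator when $k-l=n-m$, and the averaging map $E(x)=\int_\T \delta^A_t(x)\,dt$, which is the identity on $\RA$ because the generators of $\RA$ are $\delta^A$-fixed and which annihilates $w$ when $k-l+m-n\neq 0$ --- are precisely the details the paper leaves implicit, and they are correct \emph{provided} $w\neq 0$.

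The flaw is your closing claim that the standing hypotheses force $w\neq 0$. They do not. The conditions $\mu,\nu\in B_*(\bar{X}_A)$, $\bar{\xi},\bar{\eta}\in B_*(\bar{X}_{A^t})$ and $A(\xi_k,\mu_1)=A(\eta_l,\nu_1)=1$ guarantee only that the words $\xi\mu$ and $\eta\nu$ are admissible and that $w$ lies in the corner $E_A(\OTA\otimes\OA)E_A$; nonvanishing is a separate condition linking the two pairs. Indeed $S_\mu S_\nu^*\neq 0$ if and only if $\mu_m$ and $\nu_n$ admit a common follower, i.e. $A(\mu_m,j)=A(\nu_n,j)=1$ for some $j$ (compute $(S_\mu S_\nu^*)^*(S_\mu S_\nu^*)=S_\nu\bigl(\sum_{j} A(\mu_m,j)S_jS_j^*\bigr)S_\nu^*$), and similarly $T_{\bar{\xi}}T_{\bar{\eta}}^*\neq 0$ if and only if $\xi_1$ and $\eta_1$ admit a common predecessor. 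For the irreducible non-permutation matrix $A=\bigl[\begin{smallmatrix}1&1&0\\0&0&1\\1&0&0\end{smallmatrix}\bigr]$, rows $1$ and $2$ share no common entry $1$, so $S_1S_2^*=0$; then $\xi=(1,1)$, $\eta=(1)$, $\mu=(1)$, $\nu=(2)$ satisfies every hypothesis of the lemma with $k-l=1\neq 0=n-m$, and yet $w=0\in\RA$. So the implication you assert is false --- and this example in fact shows that the lemma, read literally, fails for vanishing elements, so it must be understood (as the paper tacitly does) as a statement about nonzero $w$, equivalently about nonempty bisections of the groupoid. Your instinct that the ``only if'' direction needs $w\neq 0$ is sound, and more careful than the paper itself; but the correct repair is to \emph{assume} $w\neq 0$ (or to add the common-follower and common-predecessor conditions), not to derive it from the hypotheses as stated.
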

Hence we have
\begin{proposition}[{\cite[Theorem 9.6]{MaPre2017}}]
The fixed point algebra 
$(\WRA)^{\delta^A}$
of $\WRA$ under $\delta^A$ is the asymptotic Ruelle algebra $\RA$.
\end{proposition}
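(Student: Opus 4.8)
The plan is to realize $\RA$ as the range of the canonical conditional expectation attached to the circle action $\delta^A$, and then to identify that range with the fixed point algebra. First I would recall that, since $\delta^A$ is a strongly continuous action of the compact group $\T$ on $\WRA$, averaging over the group yields a norm-one projection
\begin{equation*}
E(x) = \frac{1}{2\pi}\int_0^{2\pi} \delta^A_t(x)\, dt, \qquad x \in \WRA,
\end{equation*}
which is a conditional expectation whose range is precisely the fixed point algebra $(\WRA)^{\delta^A}$; indeed $E(x)=x$ holds if and only if $x$ is fixed by every $\delta^A_t$, so that $E(\WRA) = (\WRA)^{\delta^A}$.

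Next I would establish the inclusion $\RA \subseteq (\WRA)^{\delta^A}$. By Proposition \ref{prop:2.1}(ii), $\RA$ is the $C^*$-algebra generated by the elements $T_{\bar{\xi}}T_{\bar{\eta}}^*\otimes S_\mu S_\nu^*$ subject to $A(\xi_k,\mu_1)=A(\eta_l,\nu_1)=1$ and $k-l = n-m$, and the preceding lemma shows that each such generator is fixed by $\delta^A_t$ for all $t$. Since the $\delta^A_t$ are $*$-automorphisms, the set of $\delta^A$-invariant elements is a closed $*$-subalgebra of $\WRA$; it therefore contains the $C^*$-algebra generated by these invariant generators, which is exactly $\RA$.

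For the reverse inclusion I would use that, by Proposition \ref{prop:2.1}(iii), the linear span of the monomials $T_{\bar{\xi}}T_{\bar{\eta}}^*\otimes S_\mu S_\nu^*$ (with $A(\xi_k,\mu_1)=A(\eta_l,\nu_1)=1$) is a dense $*$-subalgebra of $\WRA$. Applying $E$ to such a monomial, and combining the preceding lemma with the fact that $\frac{1}{2\pi}\int_0^{2\pi}\exp(\sqrt{-1}jt)\,dt$ equals $1$ when $j=0$ and $0$ otherwise, gives
\begin{equation*}
E(T_{\bar{\xi}}T_{\bar{\eta}}^*\otimes S_\mu S_\nu^*) =
\begin{cases}
T_{\bar{\xi}}T_{\bar{\eta}}^*\otimes S_\mu S_\nu^* & \text{if } k-l = n-m, \\
0 & \text{otherwise.}
\end{cases}
\end{equation*}
Thus $E$ carries the dense span into the linear span of the $\RA$-generators listed in Proposition \ref{prop:2.1}(ii), which lies in $\RA$. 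As $E$ is contractive and $\RA$ is closed, continuity yields $E(\WRA)\subseteq \RA$. Together with the first inclusion and the identity $E(\WRA)=(\WRA)^{\delta^A}$, this forces $(\WRA)^{\delta^A} = \RA$.

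I expect the only point requiring care, rather than a genuine obstacle, to be the verification that the linear span of the spanning monomials is already a $*$-subalgebra. This rests on the standard fact that, under the Cuntz--Krieger relations \eqref{eq:CKA} and \eqref{eq:CKTA}, products and adjoints of the monomials $T_{\bar{\xi}}T_{\bar{\eta}}^*$ and $S_\mu S_\nu^*$ remain finite linear combinations of monomials of the same form, so that the Fourier-coefficient computation above determines $E$ on a dense subalgebra. No deeper difficulty is anticipated, since the $\Z$-grading of $\WRA$ by the integer $k-l+m-n$ is implemented exactly by $\delta^A$.
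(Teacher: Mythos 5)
Your proposal is correct and takes essentially the same approach as the paper: the paper deduces the proposition from the eigenvalue formula $\delta^A_t(T_{\bar{\xi}}T_{\bar{\eta}}^*\otimes S_\mu S_\nu^*) = \exp(\sqrt{-1}(k-l+m-n)t)\,T_{\bar{\xi}}T_{\bar{\eta}}^*\otimes S_\mu S_\nu^*$ together with the lemma that such a monomial lies in $\RA$ exactly when $k-l=n-m$, leaving the averaging step implicit (and citing its earlier work for details). Your conditional-expectation argument is precisely the standard way of making that implicit step explicit, so there is no substantive difference.
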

As in \cite[Lemma 9.5]{MaPre2017}, the $C^*$-subalgebra of $C^*(G_A^a)$
generated by elements 
$T_{\bar{\xi}}T_{\bar{\xi}}^*\otimes S_\mu S_\mu^*,\,
\bar{\xi} =(\xi_k,\dots,\xi_1) \in B_*(\bar{X}_{A^t}),
\mu =(\mu_1,\dots,\mu_m) \in B_*(\bar{X}_A) 
$
with $A(\xi_k,\mu_1) =1$ is canonically isomorphic to the commutative $C^*$-algebra
$C(\bar{X}_A)$ of continuous functions on $\bar{X}_A$.
In what follows, we identify the subalgebra with the algebra $C(\bar{X}_A)$
so that $C(\bar{X}_A)$ is a $C^*$-subalgebra of $\RA$ and $\WRA$.
\section{Asymptotic conjugacy and topological conjugacy}

For $x=(x_n)_{n\in\Z} \in \bar{X}_A$,  we set
$x_+= (x_n)_{n=0}^\infty$ and
$x_-= (x_{-n})_{n=0}^\infty$. 
Let us denote by $X_A$ 
the compact Hausdorff space 
 of right infinite sequences 
$(x_i)_{i\in\Zp} \in \{1,\dots,N\}^{\Zp}$ 
satisfying $A(x_i,x_{i+1}) =1, i \in \Zp$.
The right one-sided topological Markov shift
$(X_A,\sigma_A)$
is defined by a topological dynamical system 
of shift transformation $\sigma_A((x_i)_{i\in\Zp}) = (x_{i+1})_{i\in\Zp}$   
on $X_A$.
For $x =(x_i)_{i\in\Zp} \in X_A$ and $k \in \Zp$,
we set
$x_{[k,\infty)} = \sigma_A^k(x) =(x_k, x_{k+1}, \dots ) \in X_A.$

In \cite{MaPre2017}, 
a notion of asymptotic  conjugacy in Smale spaces were introduced.
We apply the notion for topological Markov shifts and rephrase it in the following way.
\begin{definition}[{\cite{MaPre2017}}]
Two topological Markov shifts
$(\bar{X}_A,\bar{\sigma}_A)$
and
$(\bar{X}_B,\bar{\sigma}_B)$
are said to be {\it asymptotically conjugate}\/
if there exists a homeomorphism
$h:\bar{X}_A \longrightarrow \bar{X}_B$ satisfying the following three conditions
\begin{enumerate}
\renewcommand{\theenumi}{\roman{enumi}}
\renewcommand{\labelenumi}{\textup{(\theenumi)}}
\item
There exists a nonnegative integer $K \in \Zp$ 
such that
\begin{align}
\bar{\sigma}_B^{K+1}(h(x))_+ & = \bar{\sigma}_B^{K}(h(\bar{\sigma}_A(x)))_+ 
\quad \text{ for }  x \in \bar{X}_A,
\label{eq:asp1}\\
\bar{\sigma}_B^{-K+1}(h(x))_- & = \bar{\sigma}_B^{-K}(h(\bar{\sigma}_A(x)))_- 
\quad \text{ for } x \in \bar{X}_A,
\label{eq:asp2}\\
\bar{\sigma}_A^{K+1}(h^{-1}(y))_+ & = \bar{\sigma}_A^{K}(h^{-1}(\bar{\sigma}_B(y)))_+
\quad \text{ for }  y \in \bar{X}_B,
\label{eq:asp3}\\
\bar{\sigma}_A^{-K+1}(h^{-1}(y))_- & = \bar{\sigma}_A^{-K}(h^{-1}(\bar{\sigma}_B(y)))_-
\quad \text{ for } y \in \bar{X}_B.
\label{eq:asp4} 
\end{align}
\item
There exists a continuous function
$m_1: G_A^a\longrightarrow\Zp$ such that 
\begin{gather*}
\bar{\sigma}_B^{m_1(x,z)}(h(x))_+ 
=
\bar{\sigma}_B^{m_1(x,z)}(h(z))_+ 
\quad \text{ for } (x,z) \in G_A^a, \\
\bar{\sigma}_B^{-m_1(x,z)}(h(x))_-
=
\bar{\sigma}_B^{-m_1(x,z)}(h(z))_- 
\quad \text{ for } (x,z) \in G_A^a.
\end{gather*}
\item
There exists a continuous function
$m_2: G_B^a\longrightarrow\Zp$ such that 
\begin{gather*}
\bar{\sigma}_A^{m_2(y,w)}(h^{-1}(y))_+
=
\bar{\sigma}_A^{m_2(y,w)}(h^{-1}(w))_+ 
\quad \text{ for } (y,w) \in G_B^a, \\
\bar{\sigma}_A^{-m_2(y,w)}(h^{-1}(y))_-
=
\bar{\sigma}_A^{-m_2(y,w)}(h^{-1}(w))_- 
\quad \text{ for } (y,w) \in G_B^a.
\end{gather*}
\end{enumerate}
\end{definition}

Let $A=[A(i,j)]_{i,j=1}^N, B=[B(i,j)]_{i,j=1}^M$
be irreducible matrices with entries in $\{0,1\}$.
The following proposition is key in this section.
\begin{proposition}
If the topological Markov shifts 
$(\bar{X}_A,\bar{\sigma}_A)$ and
$(\bar{X}_B,\bar{\sigma}_B)$
are
asymptotically conjugate, 
then they are topologically conjugate.
\end{proposition}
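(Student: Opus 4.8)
The plan is to manufacture an honest topological conjugacy out of the homeomorphism $h$ by a shift-correction limit. First I would rewrite clause (i): unravelling the definitions of $(\cdot)_+$ and $(\cdot)_-$, equations \eqref{eq:asp1} and \eqref{eq:asp2} say exactly that $\bar{\sigma}_B(h(x))_m = h(\bar{\sigma}_A(x))_m$ for every coordinate $m$ with $|m| \ge K$; that is, the maps $\bar{\sigma}_B \circ h$ and $h \circ \bar{\sigma}_A$ differ only inside the fixed central window $|m| < K$, uniformly in $x$. I then set $g_n = \bar{\sigma}_B^{-n} \circ h \circ \bar{\sigma}_A^n \colon \bar{X}_A \to \bar{X}_B$, a homeomorphism for each $n$. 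Feeding the displayed agreement into the comparison of $g_n$ and $g_{n+1}$, I would check that $g_{n+1}(x)_j = g_n(x)_j$ whenever $j \le n - K + 1$, so each coordinate stabilizes and $\psi(x) := \lim_{n\to\infty} g_n(x)$ exists coordinatewise, landing in the closed set $\bar{X}_B$.

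Because the window $K$ is uniform, $g_n$ and $\psi$ agree on the symmetric block $[-(n-K+1),\, n-K+1]$, so $g_n \to \psi$ uniformly and hence $\psi$ is continuous as a uniform limit of continuous maps. A one-line shift computation, $\bar{\sigma}_B g_n = \bar{\sigma}_B^{1-n} h \bar{\sigma}_A^n$ against $g_n \bar{\sigma}_A = \bar{\sigma}_B^{-n} h \bar{\sigma}_A^{n+1}$, yields $\bar{\sigma}_B \circ \psi = \psi \circ \bar{\sigma}_A$ after reindexing the limit, so $\psi$ is shift-equivariant. Running the identical construction on $h^{-1}$, now invoking \eqref{eq:asp3} and \eqref{eq:asp4}, I obtain a continuous shift-equivariant $\phi := \lim_n \tilde{g}_n$ with $\tilde{g}_n = \bar{\sigma}_A^{-n} \circ h^{-1} \circ \bar{\sigma}_B^n$, again a uniform limit of homeomorphisms.

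The decisive observation for bijectivity is that at each finite level $g_n$ and $\tilde{g}_n$ are mutually inverse: $g_n \circ \tilde{g}_n = \bar{\sigma}_B^{-n} h h^{-1} \bar{\sigma}_B^n = \id_{\bar{X}_B}$ and likewise $\tilde{g}_n \circ g_n = \id_{\bar{X}_A}$. To conclude $\psi \circ \phi = \id$ I would estimate, for fixed $x$ and large $n$, $d(\psi(\phi(x)), x) \le d(\psi(\phi(x)), \psi(\tilde{g}_n(x))) + d(\psi(\tilde{g}_n(x)), g_n(\tilde{g}_n(x))) + d(g_n(\tilde{g}_n(x)), x)$, where the last term is $0$ since $g_n \tilde{g}_n = \id$, the middle term tends to $0$ by uniformity of $g_n \to \psi$, and the first tends to $0$ by continuity of $\psi$ with $\tilde{g}_n(x) \to \phi(x)$. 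Hence $\psi \circ \phi = \id$, and symmetrically $\phi \circ \psi = \id$, so $\psi$ is a homeomorphism with $\psi^{-1} = \phi$; being shift-equivariant it is the desired topological conjugacy. Note this argument uses only clause (i); clauses (ii)--(iii) are not needed for this direction. I expect the main obstacle to be largely bookkeeping: pinning down the exact index ranges when passing from the half-shifted identities \eqref{eq:asp1}--\eqref{eq:asp4} to the clean ``agreement off a window'' statement and the ensuing uniform stabilization of the $g_n$. The subtlety is that the off-center coordinates of the non-equivariant map $h$ are a priori not governed by its modulus of continuity, and it is precisely clause (i) that supplies the missing control; the elegant point that rescues bijectivity without any equicontinuity estimate is the exact relation $g_n \tilde{g}_n = \id$.
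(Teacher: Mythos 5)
Your proof is correct, but it takes a genuinely different route from the paper's. Both arguments begin from the same reformulation of clause (i) --- that $\bar{\sigma}_B\circ h$ and $h\circ\bar{\sigma}_A$ agree on all coordinates $m$ with $|m|\ge K$ --- but the paper then passes to the one-sided maps $h_+(x)=\bar{\sigma}_B^{K}(h(x))_+$ and $h^{-1}_+(y)=\bar{\sigma}_A^{K}(h^{-1}(y))_+$, observes that the intertwining $h_+\circ\bar{\sigma}_A=\sigma_B\circ h_+$ makes them sliding block codes, extends the resulting block maps to two-sided sliding block codes $\Phi_\infty,\Psi_\infty$, and gets bijectivity from the composition identities $\Phi_\infty\circ\Psi_\infty=\bar{\sigma}_B^{2K}$ and $\Psi_\infty\circ\Phi_\infty=\bar{\sigma}_A^{2K}$. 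You instead build the conjugacy as a uniform limit $\psi=\lim_n\bar{\sigma}_B^{-n}\circ h\circ\bar{\sigma}_A^{n}$, where the window-agreement forces coordinatewise stabilization, and you obtain bijectivity from the exact finite-stage identities $g_n\circ\tilde{g}_n=\id$ combined with uniform convergence (your insistence on uniform rather than pointwise convergence in the middle term of the triangle inequality is exactly right, since the argument point $\tilde{g}_n(x)$ moves with $n$). What each approach buys: yours is more elementary and self-contained --- it avoids the block-map (Curtis--Hedlund--Lyndon type) machinery entirely and produces the inverse $\phi=\psi^{-1}$ explicitly --- while the paper's produces conjugacies that are explicit finite-window sliding block codes related to each other by powers of the shift, which sits naturally in the symbolic-dynamics toolkit. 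One mirror-image detail worth noting: because your $g_n$ pushes the relevant coordinates of $h$ off to $-\infty$, your stabilization in fact uses only the left-tail identities \eqref{eq:asp2} and \eqref{eq:asp4}, whereas the paper remarks that its proof needs only the right-tail identities \eqref{eq:asp1} and \eqref{eq:asp3}; both observations sharpen the hypothesis in the same way, just on opposite sides.
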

\begin{proof}
Let $h:\bar{X}_A \longrightarrow \bar{X}_B$ be a homeomorphism
and $K \in \Zp$
a nonnegative integer  
 satisfying \eqref{eq:asp1}, \eqref{eq:asp2}, \eqref{eq:asp3}, \eqref{eq:asp4}.
We define two continuous maps
$h_+: \bar{X}_A\longrightarrow X_B$
and
$h^{-1}_+: \bar{X}_B\longrightarrow X_A$ by setting
\begin{align*}
h_+(x) & = \bar{\sigma}_B^{K}(h(x))_+, \qquad x \in \bar{X}_A, \\
h^{-1}_+(y) & = \bar{\sigma}_A^{K}(h^{-1}(y))_+,\qquad y \in \bar{X}_B. 
\end{align*}
It then follows that by \eqref{eq:asp1},
\begin{align*}
h_+(\bar{\sigma}_A(x))
& = \bar{\sigma}_B^{K}(h(\bar{\sigma}_A(x)))_+\\
& =\bar{\sigma}_B^{K+1}(h(x))_+ \\
& =[\bar{\sigma}_B^{K+1}(h(x))]_{[0,\infty)} \\
& =[\bar{\sigma}_B(h(x))]_{[K,\infty)}.
\end{align*}
On the other hand, 
\begin{align*}
\sigma_B(h_+(x))
& =\sigma_B( [\bar{\sigma}_B^{K}(h(x))]_{[0,\infty)})\\
& =[\bar{\sigma}_B^{K}(h(x))]_{[1,\infty)}\\
& =[h(x)]_{[K+1,\infty)}\\
& =[\bar{\sigma}_B(h(x))]_{[K,\infty)}.
\end{align*}
Therefore we have
\begin{equation*}
h_+(\bar{\sigma}_A(x)) = \sigma_B(h_+(x)) \qquad \text{ for } x \in \bar{X}_A.
\end{equation*} 
Hence the continuous map $h_+: \bar{X}_A\longrightarrow X_B$
is a sliding block code (cf. \cite{LM}) so that 
there exists a block map
$\Phi: B_{m+n+1}(\bar{X}_A) \longrightarrow \{1,2,\dots,M\}$ 
for some $m,n\in \Zp$ such that 
\begin{equation*}
h_+((x_i)_{i\in\Z}) = \Phi([x_{i-m},\dots,x_{i+n}])_{i\in \Zp} 
\qquad \text{ for } x=(x_i)_{i\in\Z} \in \bar{X}_A. 
\end{equation*}
Similarly we know that 
the continuous map $h^{-1}_+: \bar{X}_B\longrightarrow X_A$
satisfies
$h^{-1}_+(\bar{\sigma}_B(y)) = \sigma_A(h^{-1}_+(y))$
for
$y \in \bar{X}_B
$
 so that 
there exists a block map
$\Psi: B_{m'+n'+1}(\bar{X}_B) \longrightarrow \{1,2,\dots,N\}$ 
for some $m',n'\in \Zp$  such that 
\begin{equation*}
h^{-1}_+((y_i)_{i\in\Z}) = \Psi([y_{i-m'},\dots, y_{i+n'}])_{i\in \Zp} 
\qquad \text{ for } y=(y_i)_{i\in\Z} \in \bar{X}_B. 
\end{equation*}
By using these block maps
$\Phi: B_{m+n+1}(\bar{X}_A) \longrightarrow \{1,2,\dots,M\}$ 
and
$\Psi: B_{m'+n'+1}(\bar{X}_B) \longrightarrow \{1,2,\dots,N\},$  
we define two sliding block codes  
$\Phi_{\infty}:\bar{X}_A\longrightarrow \bar{X}_B$ and
$\Psi_{\infty}:\bar{X}_B\longrightarrow \bar{X}_A$
by setting
\begin{align*}
\Phi_{\infty}((x_i)_{i\in\Z}) & = \Phi([x_{i-m},\dots,x_{i+n}])_{i\in \Z} \in \bar{X}_B 
\qquad \text{ for } x=(x_i)_{i\in\Z} \in \bar{X}_A,\\
\Psi_\infty((y_i)_{i\in\Z}) & = \Psi([y_{i-m'},\dots, y_{i+n'}])_{i\in \Z} \in \bar{X}_A 
\qquad \text{ for } y=(y_i)_{i\in\Z} \in \bar{X}_B. 
\end{align*}
We note that 
\begin{align*}
\Phi_{\infty}((x_i)_{i\in\Z})_+ & = h_+((x_i)_{i\in\Z}) \in X_B 
\qquad \text{ for } x=(x_i)_{i\in\Z} \in \bar{X}_A,\\
\Psi_\infty((y_i)_{i\in\Z})_+ & = h^{-1}_+((y_i)_{i\in\Z}) \in {X}_A 
\qquad \text{ for } y=(y_i)_{i\in\Z} \in \bar{X}_B. 
\end{align*}
For $y = (y_i)_{i\in\Z} \in \bar{X}_B$, we have
\begin{align*}
[\Psi_\infty(y)]_{[K,\infty)}
= & [\bar{\sigma}_A^K(\Psi_\infty(y))]_{[0,\infty)} \\
= & [\Psi_\infty(\bar{\sigma}_B^K(y))]_{[0,\infty)} \\
= & h^{-1}_+(\bar{\sigma}_B^K(y)) \\
= & [ \bar{\sigma}_A^K(h^{-1}(\bar{\sigma}_B^K(y)))]_{[0,\infty)} \\
= & [h^{-1}(\bar{\sigma}_B^K(y))]_{[K,\infty)}. 
\end{align*}
As $\Phi_\infty$ is a sliding block code with memory $m$, 
the condition
$
[\Psi_\infty(y)]_{[K,\infty)} =[h^{-1}(\bar{\sigma}_B^K(y)))]_{[K,\infty)}$
implies
\begin{equation*}
[\Phi_\infty(\Psi_\infty(y))]_{[K+m,\infty)}
 =[\Phi_\infty(h^{-1}(\bar{\sigma}_B^K(y)))]_{[K+m,\infty)}.
\end{equation*}
It then follows that 
\begin{align*}
[\Phi_\infty(\Psi_\infty(y))]_{[K+m,\infty)}
= & [\Phi_\infty(h^{-1}(\bar{\sigma}_B^K(y)))]_{[K+m,\infty)} \\
= & [h_+(h^{-1}(\bar{\sigma}_B^K(y)))]_{[K+m,\infty)} \\
= & [(\bar{\sigma}_B^K\circ h) (h^{-1}(\bar{\sigma}_B^K(y)))]_{[K+m,\infty)} \\
= & [\bar{\sigma}_B^K(\bar{\sigma}_B^K(y))]_{[K+m,\infty)} \\
= & [\bar{\sigma}_B^{2K}(y)]_{[K+m,\infty)} 
\end{align*}
so that 
\begin{equation*}
[\Phi_\infty(\Psi_\infty(y))]_{[K+m,\infty)}
= [\bar{\sigma}_B^{2K}(y)]_{[K+m,\infty)} \quad \text{ for } y \in \bar{X}_B. 
\end{equation*}
Since $\Phi_\infty\circ\Psi_\infty$ is a sliding block code, we obtain that
\begin{equation*}
\Phi_\infty \circ\Psi_\infty = \bar{\sigma}_B^{2K}.
\end{equation*}
Hence 
$\Phi_\infty$ is surjective.
Similarly we know that 
$\Psi_\infty \circ\Phi_\infty = \bar{\sigma}_A^{2K}
$ so that 
$\Phi_\infty$ is injective.
Therefore we have a topological conjugacy
$\Phi_{\infty}:\bar{X}_A\longrightarrow \bar{X}_B$.
\end{proof}
We remark that the above proof needs only the equalities
\eqref{eq:asp1} and \eqref{eq:asp3}.

We thus conclude the following.
\begin{theorem}\label{thm:section2}
Two topological Markov shifts 
$(\bar{X}_A,\bar{\sigma}_A)$ and
$(\bar{X}_B,\bar{\sigma}_B)$
are
asymptotically conjugate if and only if they are topologically conjugate.
\end{theorem}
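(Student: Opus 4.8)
One direction of the theorem, that asymptotic conjugacy implies topological conjugacy, is exactly the preceding Proposition, so the remaining task is the converse: every topological conjugacy gives rise to an asymptotic conjugacy. The plan is to take a topological conjugacy $h:\bar{X}_A\longrightarrow\bar{X}_B$, that is, a homeomorphism with $h\circ\bar{\sigma}_A=\bar{\sigma}_B\circ h$, and to verify directly that this same $h$ satisfies the three defining conditions (i), (ii), (iii) of asymptotic conjugacy. The key structural input is the Curtis--Hedlund--Lyndon theorem: both $h$ and $h^{-1}$ are sliding block codes, so there is a fixed radius $R\in\Zp$ with the property that each coordinate $h(x)_i$ depends only on $x_{i-R},\dots,x_{i+R}$, and similarly a radius $R'$ for $h^{-1}$.

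First I would dispose of condition (i). Since $h$ intertwines the shifts exactly, $h(\bar{\sigma}_A(x))=\bar{\sigma}_B(h(x))$, whence $\bar{\sigma}_B^{K}(h(\bar{\sigma}_A(x)))=\bar{\sigma}_B^{K+1}(h(x))$ for every $K$; taking positive and negative parts yields \eqref{eq:asp1} and \eqref{eq:asp2} with $K=0$, and the same computation applied to $h^{-1}$ (which intertwines $\bar{\sigma}_B$ and $\bar{\sigma}_A$) yields \eqref{eq:asp3} and \eqref{eq:asp4}. Thus condition (i) holds trivially with $K=0$.

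For condition (ii) I would first note that $h$ carries $G_A^a$ into $G_B^a$: if $(x,z)\in G_A^a$ then $x$ and $z$ agree outside a finite window, and because $h$ is a sliding block code of radius $R$ this forces $h(x)$ and $h(z)$ to agree outside a finite window as well, so $(h(x),h(z))\in G_B^a$. Concretely, writing $w(x,z)=1+\max\{|i|:x_i\neq z_i\}$ for the disagreement radius (set equal to $0$ when $x=z$), agreement of $x$ and $z$ on $\{\,|i|\geq w(x,z)\,\}$ forces $h(x)_i=h(z)_i$ for all $|i|\geq w(x,z)+R$. I would therefore set $m_1(x,z)=w(x,z)+R$; then $h(x)$ and $h(z)$ agree on $\{\,|i|\geq m_1(x,z)\,\}$, which is precisely the pair of equalities required in (ii). Condition (iii) is handled identically, using $h^{-1}$ and its radius $R'$ and setting $m_2(y,w)=w(y,w)+R'$.

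The one point needing genuine care, which I expect to be the main obstacle, is the continuity of $m_1$ and $m_2$ as $\Zp$-valued functions on the \'etale groupoids $G_A^a$ and $G_B^a$. Since $\Zp$ is discrete, continuity means local constancy, so it suffices to prove that the disagreement radius $w$ is locally constant. For this I would appeal to the definition of the \'etale topology on $G_A^a$: the ``graph'' sets $\{(x,z)\in G_A^a : x_i=z_i \text{ for all } |i|\geq L\}$ are clopen, while the additional condition of disagreeing somewhere on $\{|i|=L-1\}$ involves only finitely many coordinates and so is clopen in the product topology. Hence each level set $w^{-1}(\{L\})$ is clopen and $w$ is continuous; adding the constant $R$ preserves this. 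Once continuity is in hand, $h$ satisfies all three conditions and is an asymptotic conjugacy, which together with the Proposition gives the stated equivalence.
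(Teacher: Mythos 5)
Your proposal is correct and follows the paper's own route: the hard direction (asymptotic conjugacy $\Longrightarrow$ topological conjugacy) is delegated to the preceding Proposition exactly as in the paper, while the converse, which the paper dismisses as ``direct to see,'' is what you spell out via the Curtis--Hedlund--Lyndon theorem, the choice $K=0$, and local constancy of the disagreement radius on the \'etale groupoid. Your filled-in details (in particular the verification that $m_1 = w + R$ is continuous because the level sets of $w$ are clopen) are sound, so this is essentially the paper's proof with the omitted easy direction made explicit.
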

\begin{proof}
It is direct to see that topological conjugacy implies asymptotic conjugacy.
Hence the assertion follows from the preceding proposition.
\end{proof}

\section{Conjugacy, groupoid isomorphism and $C^*$-algebras}
We consider the groupoid 
$G_A^{s,u}\rtimes\Z^2$ and its $C^*$-algebra written  $\WRA$.
Recall that an action $\gamma^A$ of $\T^2$
 on $\WRA =E_A(\OTA\otimes\OA)E_A$ is defined  by setting
\begin{equation*}
\gamma^A_{(r,s)} = \alpha^{A^t}_{r}\otimes\alpha^A_s \quad \text{ on }
\OTA\otimes\OA \,\text{ for } (r,s) \in \T^2.
\end{equation*}
Since $\gamma^A_{(r,s)}(E_A) = E_A$,
we have an action $\gamma^A$ of $\T^2$ on $\WRA$,
which defines two kinds of actions of $\T$ on $\WRA$
such that 
$$
\delta^A_t = \gamma^A_{(t,t)} \quad \text{ and }
\quad
\rho^A_t = \gamma^A_{(-\frac{t}{2},\frac{t}{2})}
\quad\text{ for }
t \in\T.
$$ 
We regard the groupoid $C^*$-algebra $C^*(G_A^a\rtimes\Z)$
as the $C^*$-crossed product $C^*(G_A^a)\rtimes\Z$
in a natural way.
Let us denote by $\hat{\sigma}^A$ 
the dual action on $C^*(G_A^a)\rtimes\Z$.
In the following lemma, the $C^*$-algebra $\WRA$
is regarded as a $C^*$-subalgebra of $\OTA\otimes\OA$ as in 
Proposition \ref{prop:2.1} (ii).
\begin{lemma}
There exists an isomorphism 
$\Psi:C^*(G_A^a)\rtimes\Z\longrightarrow \RA$
such that 
\begin{equation*}
\Psi(C(\bar{X}_A))= C(\bar{X}_A) \quad
\text{ and }
\quad
\Psi \circ \hat{\sigma}^A_t = \rho^A_t \circ\Psi, \quad t \in \T. 
\end{equation*} 
\end{lemma}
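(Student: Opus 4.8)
The plan is to exhibit $\RA$ as a crossed product and match up the relevant structure explicitly. By Proposition \ref{prop:2.1}(i) and (ii), both $C^*(G_A^a)$ and $\RA$ are realized as concrete $C^*$-subalgebras of $\OTA\otimes\OA$ generated by elements of the form $T_{\bar{\xi}}T_{\bar{\eta}}^*\otimes S_\mu S_\nu^*$, the difference being that $C^*(G_A^a)$ imposes the two constraints $k=l$ and $m=n$, while $\RA$ imposes the single constraint $k-l=n-m$. Thus $C^*(G_A^a)$ sits inside $\RA$ as the subalgebra of elements with ``balanced degree'' $k-l=n-m=0$. First I would identify, inside $\RA$, a unitary (or more precisely a bilateral family of partial isometries assembling to a single degree-shift element) implementing the $\Z$-action, so that $\RA$ is generated by $C^*(G_A^a)$ together with this degree-shifting element.

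\textbf{Constructing the implementing element.}
The element I have in mind is built from the partial isometries $T_i\otimes S_i^*$, which the remark following Proposition \ref{prop:2.1} shows belong to $\RA$. Setting $u = \sum_{i=1}^N T_i\otimes S_i^*$ (restricted appropriately to the support projection $E_A$), one checks using \eqref{eq:CKA} and \eqref{eq:CKTA} that $u$ is a unitary in the corner $E_A(\OTA\otimes\OA)E_A$, or at least a partial isometry with the right domain and range projections, and that conjugation $\Ad(u)$ carries $C^*(G_A^a)$ into itself. Concretely, the index $k-l=n-m$ of a spanning element $T_{\bar{\xi}}T_{\bar{\eta}}^*\otimes S_\mu S_\nu^*$ is exactly the degree $d_A$-value, and one verifies that $u$ raises this degree by one. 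The generated $C^*$-algebra $C^*(C^*(G_A^a),u)$ then equals all of $\RA$ because every spanning element of $\RA$ can be written as a product of a balanced-degree element of $C^*(G_A^a)$ with a power of $u$ (or $u^*$), matching the single constraint $k-l=n-m$ against the decomposition into a balanced part and a shift.

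\textbf{Verifying the crossed-product structure and equivariance.}
With $u$ in hand I would invoke the universal property of the crossed product: the $\Z$-action on $C^*(G_A^a)$ coming from $\bar\sigma_A$ is implemented by $\Ad(u)$ inside $\RA$, so there is a canonical surjection $\Psi:C^*(G_A^a)\rtimes\Z\to\RA$ sending the canonical unitary to $u$ and restricting to the identity on $C^*(G_A^a)$. Injectivity follows either from simplicity of $\RA$ together with faithfulness of the restriction to $C^*(G_A^a)$, or more cleanly by a gauge-invariance / spectral-subspace argument: the dual action $\hat\sigma^A$ grades $C^*(G_A^a)\rtimes\Z$, and $\Psi$ intertwines it with the degree grading on $\RA$, which is exactly the restriction of $\rho^A$. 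Since $\Psi$ restricts to the canonical identification $C(\bar{X}_A)=C(\bar{X}_A)$ by construction (both arise from the diagonal elements $T_{\bar\xi}T_{\bar\xi}^*\otimes S_\mu S_\mu^*$ as in \cite[Lemma 9.5]{MaPre2017}), and since $u$ lies in the degree-one spectral subspace of $\rho^A$ while the generating unitary lies in the degree-one subspace of $\hat\sigma^A$, the intertwining relation $\Psi\circ\hat\sigma^A_t=\rho^A_t\circ\Psi$ holds on generators and hence everywhere.

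\textbf{Main obstacle.}
The delicate point I expect to be the crux is verifying that $u$ genuinely implements the shift on all of $C^*(G_A^a)$ as an \emph{honest} conjugation inside the corner $E_A(\OTA\otimes\OA)E_A$ — that is, that the relevant domain and range projections match so that $u^*u$ and $uu^*$ dominate the supports of the elements being conjugated, and that no ``loss'' occurs at the boundary words where the admissibility conditions $A(\xi_k,\mu_1)=1$ are invoked. Keeping careful track of these support projections, and confirming that $\Ad(u)$ matches the shift $\bar\sigma_A$ used to form the crossed product $C^*(G_A^a)\rtimes\Z$ (rather than some off-by-one variant), is where the real bookkeeping lies; once that is pinned down, the equivariance and the identification of $C(\bar{X}_A)$ are formal consequences of the degree grading.
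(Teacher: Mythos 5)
Your proposal is correct and follows essentially the same route as the paper: the paper's proof defines the unitary $U_A=\sum_{i=1}^N T_i^*\otimes S_i$ (the adjoint of your $u$) in $\RA$, cites \cite[Proposition 9.9]{MaPre2017} for the fact that $\Ad(U_A)$ implements the shift, and concludes from the one-line computation $\rho^A_t(U_A)=\exp(\sqrt{-1}t)U_A$ that the crossed-product identification intertwines $\hat{\sigma}^A$ with $\rho^A$. Your write-up merely spells out the universal-property and spectral-subspace details that the paper leaves implicit.
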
 
\begin{proof}
Let $U_A$ be the unitary in $\RA$
defined by 
$U_A = \sum_{i=1}^N T_i^* \otimes S_i$.
As in \cite[Proposition 9.9]{MaPre2017}, $\Ad(U_A)$ corresponds to the shift 
operation on $C(\bar{X}_A)$.
Since
\begin{equation*}
\rho^A_t(U_A) 
= \sum_{i=1}^N \alpha_{-\frac{t}{2}}(T_i^*)
  \otimes \alpha_{\frac{t}{2}}(S_i)
= \exp(\sqrt{-1}t)\sum_{i=1}^N T_i^* \otimes S_i
= \exp(\sqrt{-1}t)U_A,
 \end{equation*}
we have the assertion.
\end{proof}
We have the following main result of the paper.
\begin{theorem}\label{thm:main}
The following six conditions are equivalent.
\begin{enumerate}
\renewcommand{\theenumi}{\roman{enumi}}
\renewcommand{\labelenumi}{\textup{(\theenumi)}}
\item
Topological Markov shifts 
$(\bar{X}_A,\bar{\sigma}_A)$ and
$(\bar{X}_B,\bar{\sigma}_B)$
 are topologically conjugate.
\item
Topological Markov shifts 
$(\bar{X}_A,\bar{\sigma}_A)$ and
$(\bar{X}_B,\bar{\sigma}_B)$
 are asymptotically conjugate.
\item
There exists an isomorphism 
$\varphi: G_A^{a}\rtimes\Z \longrightarrow G_B^a\rtimes\Z$
of \'etale groupoids such that $d_B\circ\tilde{\varphi} = d_A$. 
\item
There exists an isomorphism 
$\tilde{\varphi}: G_A^{s,u}\rtimes\Z^2 \longrightarrow G_B^{s,u}\rtimes\Z^2$
of \'etale groupoids such that $c_B\circ\tilde{\varphi} = c_A$. 
\item
There exists an isomorphism 
$\Phi: \RA\longrightarrow\R_B$ of $C^*$-algebras
such that 
$\Phi(C(\bar{X}_A)) = C(\bar{X}_B)$
and
$\Phi\circ\rho^A_{t} =\rho^B_{t}\circ{\Phi}$
for
$t\in \T$.
\item
There exists an isomorphism 
$\tilde{\Phi}: \WRA\longrightarrow\widetilde{\R}_B$ of $C^*$-algebras
such that 
$\tilde{\Phi}(C(\bar{X}_A)) = C(\bar{X}_B)$
and
$\tilde{\Phi}\circ\gamma^A_{(r,s)} =\gamma^B_{(r,s)}\circ\tilde{\Phi}$
for
$(r,s)\in \T^2$.
\end{enumerate}
\end{theorem}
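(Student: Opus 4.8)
The plan is to build two new implications around the equivalences already established, rather than to reprove anything from scratch. By Theorem~\ref{thm:section2} we have (i)$\Leftrightarrow$(ii), and the equivalences (ii)$\Leftrightarrow$(iii)$\Leftrightarrow$(v) are quoted from \cite{MaPre2017}. It therefore suffices to fold conditions (iv) and (vi) into this web, and I would do so by proving the chain
\[
\text{(i)} \Longrightarrow \text{(iv)} \Longrightarrow \text{(vi)} \Longrightarrow \text{(v)},
\]
after which (v)$\Rightarrow$(ii)$\Rightarrow$(i) closes every loop and all six conditions become equivalent.

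For (i)$\Rightarrow$(iv): given a topological conjugacy $h:\bar{X}_A\to\bar{X}_B$, that is a homeomorphism with $h\circ\bar{\sigma}_A=\bar{\sigma}_B\circ h$, I would set
\[
\tilde{\varphi}(x,p,q,z)=(h(x),p,q,h(z)),\qquad (x,p,q,z)\in G_A^{s,u}\rtimes\Z^2 .
\]
The key point is that a conjugacy preserves the stable and unstable equivalence relations separately: since $h$ is uniformly continuous on the compact space $\bar{X}_A$ and commutes with the shifts, $(\bar{\sigma}_A^p(x),z)\in G_A^s$ forces $(\bar{\sigma}_B^p(h(x)),h(z))\in G_B^s$, and likewise for $G^u$ with backward iterates. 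Hence $\tilde{\varphi}$ is well defined and, being induced by the conjugating homeomorphism $h$, is an isomorphism of \'etale groupoids; by construction $c_B\circ\tilde{\varphi}=c_A$.

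For (iv)$\Rightarrow$(vi) I would invoke functoriality of the groupoid $C^*$-algebra construction: $\tilde{\varphi}$ induces an isomorphism $\tilde{\Phi}:\WRA\to\WRB$ whose restriction to the unit-space subalgebra carries $C(\bar{X}_A)$ onto $C(\bar{X}_B)$, and, because $\gamma^A$ is exactly the $\T^2$-action dual to the cocycle $c_A$ (and similarly for $B$), the relation $c_B\circ\tilde{\varphi}=c_A$ yields $\tilde{\Phi}\circ\gamma^A_{(r,s)}=\gamma^B_{(r,s)}\circ\tilde{\Phi}$. For (vi)$\Rightarrow$(v), specializing to $(r,s)=(t,t)$ gives $\tilde{\Phi}\circ\delta^A_t=\delta^B_t\circ\tilde{\Phi}$, so $\tilde{\Phi}$ maps the fixed-point algebra $(\WRA)^{\delta^A}=\RA$ isomorphically onto $(\WRB)^{\delta^B}=\RB$; setting $\Phi=\tilde{\Phi}|_{\RA}$ and using $\rho^A_t=\gamma^A_{(-\frac{t}{2},\frac{t}{2})}$ gives $\Phi\circ\rho^A_t=\rho^B_t\circ\Phi$, while $\Phi(C(\bar{X}_A))=C(\bar{X}_B)$ is inherited from $\tilde{\Phi}$.

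I expect the only step carrying genuine content to be (i)$\Rightarrow$(iv), where the delicate point is checking that $\tilde{\varphi}$ is not merely a bijective groupoid homomorphism but a homeomorphism compatible with the cocycle $c$, together with the precise identification of $\gamma^A$ as the dual action of $c_A$ that (iv)$\Rightarrow$(vi) relies on. The genuinely hard implication, (ii)$\Rightarrow$(i), is already handled by Theorem~\ref{thm:section2}, so the remaining verifications are essentially routine bookkeeping about fixed-point algebras and the functoriality of the groupoid construction.
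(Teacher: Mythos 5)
Your proposal is correct and follows essentially the same route as the paper: the same decomposition into the chain (i)$\Rightarrow$(iv)$\Rightarrow$(vi)$\Rightarrow$(v) grafted onto the known equivalences (i)$\Leftrightarrow$(ii) and (ii)$\Leftrightarrow$(iii)$\Leftrightarrow$(v), with the same map $\tilde{\varphi}(x,p,q,z)=(h(x),p,q,h(z))$ for (i)$\Rightarrow$(iv) and the same restriction-to-fixed-point-algebra argument for (vi)$\Rightarrow$(v). The only cosmetic difference is that where you invoke functoriality of the groupoid $C^*$-algebra construction for (iv)$\Rightarrow$(vi), the paper makes this concrete by writing the groupoid isomorphism in the form $(h(x),c(x,p,q,z),h(z))$, using $c_B\circ\tilde{\varphi}=c_A$ to force $c=c_A$, and implementing $\tilde{\Phi}$ as $\Ad(V_h)$ on the Hilbert $C^*$-modules $\ell^2(G^{s,u}\rtimes\Z^2)$.
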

\begin{proof}
The equivalence between (i) and (ii) is proved in Theorem \ref{thm:section2}.

The equivalences among (ii), (iii) and (v) are shown in \cite{MaPre2017}.


We will prove the three implications 
(i) $\Longrightarrow$ (iv), 
(iv) $\Longrightarrow$ (vi),
(vi) $\Longrightarrow$ (v).

(i) $\Longrightarrow$ (iv): 
Suppose that there exists a topological conjugacy
$h:\bar{X}_A\longrightarrow \bar{X}_B$ so that
$h\circ\bar{\sigma}_A = \bar{\sigma}_B\circ h$.  
For $(x,p,q,z) \in G_A^{s,u}\rtimes\Z^2$,
the conditions 
$(\bar{\sigma}^p_A(x),z)\in G_A^s$
and
$(\bar{\sigma}^q_A(x),z)\in G_A^u$
 imply
$(\bar{\sigma}^p_B(h(x)),h(z))\in G_B^s$
and
$(\bar{\sigma}^q_A(h(x)),h(z))\in G_B^u$,
so that 
we have
$(h(x),p,q,h(z))\in G_B^{s,u}\rtimes\Z^2$.
It is routine to show that the correspondence
$$
\tilde{\varphi}:(x,p,q,z) \in G_A^{s,u}\rtimes\Z^2
\longrightarrow
(h(x),p,q,h(z))\in G_B^{s,u}\rtimes\Z^2
$$
yields an isomorphism of \'etale groupoids.
It is then clear that 
$c_B\circ\tilde{\varphi}=c_A$. 
This shows the condition (iv).

(iv) $\Longrightarrow$ (vi):
Suppose that 
there exists an isomorphism 
$\tilde{\varphi}: G_A^{s,u}\rtimes\Z^2 \longrightarrow G_B^{s,u}\rtimes\Z^2$
of \'etale groupoids such that $c_B\circ\tilde{\varphi} = c_A$.
Since the both groupoids 
$G_A^{s,u}\rtimes\Z^2$ and $G_B^{s,u}\rtimes\Z^2$
are amenable and \'etale
by \cite[Proposition 7.2 and Lemma 7.3]{MaPre2017}, 
the $C^*$-algebras
$\WRA$ and $\widetilde{\R}_B$
are represented on the Hilbert $C^*$-modules 
$\ell^2(G_A^{s,u}\rtimes\Z^2)$
and
$\ell^2(G_B^{s,u}\rtimes\Z^2)$,
respectively as in \cite{MaPre2017}.
As $\tilde{\varphi}: G_A^{s,u}\rtimes\Z^2 \longrightarrow G_B^{s,u}\rtimes\Z^2$
is an isomorphism of \'etale groupoids,
there exist a homeomorphism
$h:\bar{X}_A\longrightarrow \bar{X}_B$
and
a continuous groupoid homomorphism 
$c: G_A^{s,u}\rtimes\Z^2 \longrightarrow \Z^2$
such that 
$$
\varphi(x,p,q,z) = (h(x), c(x,p,q,z),h(z)),\qquad (x,p,q,z) \in G_A^{s,u}\rtimes\Z^2.
$$
The condition
$c_B\circ\tilde{\varphi} = c_A$
forces us to hold the equality
$c(x,p,q,z) =(p,q)$
so that we have
$$
\varphi(x,p,q,z) = (h(x), p,q,h(z)),\qquad (x,p,q,z) \in G_A^{s,u}\rtimes\Z^2.
$$
Let us consider the unitaries 
$V_h: \ell^2(G_B^{s,u}\rtimes\Z^2) \longrightarrow \ell^2(G_A^{s,u}\rtimes\Z^2)
$
and
$V_{h^{-1}}: \ell^2(G_A^{s,u}\rtimes\Z^2) \longrightarrow \ell^2(G_B^{s,u}\rtimes\Z^2)
$
by setting
\begin{align*}
[V_h\zeta](x,p,q,z) 
=& \zeta(h(x),p,q, h(z)) \text{ for }\zeta \in  \ell^2(G_B^{s,u}\rtimes\Z^2),
(x,p,q,z) \in G_A^{s,u}\rtimes\Z^2, \\
[V_{h^{-1}}\xi](y,m,n,w) 
=& \xi(h^{-1}(y),m,n, h^{-1}(w)) \text{ for }\xi \in  \ell^2(G_A^{s,u}\rtimes\Z^2),
(y,m,n,w) \in G_B^{s,u}\rtimes\Z^2.
\end{align*}
Put
$ \tilde{\Phi}= \Ad(V_h)$
which satisfies
$\tilde{\Phi}(C_c(G_A^{s,u}\rtimes\Z^2)) = C_c(G_B^{s,u}\rtimes\Z^2)
$
so that 
$ \tilde{\Phi}(\WRA) =\widetilde{\R}_B.$
Since
$\bar{X}_A, \bar{X}_B$ are identified
with the unit spaces
\begin{align*}
(G_A^{s,u}\rtimes\Z^2)^\circ 
=&  \{(x,0,0,x) \in G_A^{s,u}\rtimes\Z^2\mid x \in \bar{X}_A \}, \\
(G_B^{s,u}\rtimes\Z^2)^\circ 
=&  \{(y,0,0,y) \in G_B^{s,u}\rtimes\Z^2\mid y \in \bar{X}_B \}, 
\end{align*}
respectively,
we easily knows that 
$\tilde{\Phi}(C(\bar{X}_A)) = C(\bar{X}_B).$
It is also direct to see that the identity 
$\tilde{\Phi}\circ\gamma^A_{(r,s)} =\gamma^B_{(r,s)}\circ\tilde{\Phi}$
for
$(r,s)\in \T^2$
holds,
because of the equality
$c_B\circ\tilde{\varphi} = c_A.$
 

(vi) $\Longrightarrow$ (v):
Suppose that 
there exists an isomorphism 
$\tilde{\Phi}: \WRA\longrightarrow\widetilde{\R}_B$ of $C^*$-algebras
such that 
$\tilde{\Phi}(C(\bar{X}_A)) = C(\bar{X}_B)$
and
$\tilde{\Phi}\circ\gamma^A_{(r,s)} =\gamma^B_{(r,s)}\circ\tilde{\Phi}$
for
$(r,s)\in \T^2$.
As the action $\delta^A_t = \gamma_{(t,t)}^A$ of $t \in \T$ act on $\WRA$
and its fixed point algebra
$(\WRA)^{\delta^A}$ is $\RA$.
Let us denote by
$\Phi$ the restriction of $\tilde{\Phi}$ 
to the fixed point algebra $\RA$.
It induces an isomorphism
$\Phi:\RA\longrightarrow \R_B$. 
Then it is clear that the action 
$\rho^A_t = \gamma_{(-\frac{t}{2},\frac{t}{2})}^A$ on $\RA$ satisfies
 $\Phi\circ\rho^A_t = \rho^B_t\circ\Phi$.
 This shows the condition (v). 
\end{proof}

\section{K-theoretic invariants}
By using Theorem \ref{thm:main}, 
 the isomorphism classes of the $C^*$-algebras 
$\RA$ and $\WRA$ are invariant under topological conjugacy of two-sided topological Markov shifts.
Concerning the asymptotic Ruelle algebra $\RA$,
its K-group formula has been obtained by Putnam 
\cite[p.129]{Putnam1} (cf. \cite{Holton}, \cite{KilPut}).
We focus on studying the K-group $K_0(\WRA)$
of the latter algebra $\WRA$.
Under the assumption that the matrix $A$ is irreducible and non-permutation,
the algebra $\WRA$ is a unital, simple, purely infinite, nuclear $C^*$-algebra satisfying UCT,
so that its isomorphism class is completely determined by its K-theory date by
a general classification theory of Kirchberg (\cite{Kirchberg}) and Phillips (\cite{Phillips}).
Hence the following is a corollary of Theorem \ref{thm:main}.
\begin{proposition}
The pair $(K_0(\WRA), [1_{\WRA}])$ of the $K_0$-group of $\WRA$ and
the position of the unit $1_{\WRA}$ of $\WRA$ in $K_0(\WRA)$
is invariant under topological conjugacy of two-sided topological Markov shift
$(\bar{X}_A,\bar{\sigma}_A)$.  
\end{proposition}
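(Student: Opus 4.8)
The plan is to read the assertion off directly from Theorem \ref{thm:main}, since an invariant of this form follows formally from the existence of a $C^*$-algebra isomorphism together with the functoriality of $K$-theory. Concretely, suppose $(\bar{X}_A,\bar{\sigma}_A)$ is topologically conjugate to a two-sided topological Markov shift $(\bar{X}_B,\bar{\sigma}_B)$ for some irreducible, non-permutation matrix $B$. By the implication (i) $\Longrightarrow$ (vi) of Theorem \ref{thm:main}, there is an isomorphism $\tilde{\Phi}: \WRA \longrightarrow \WRB$ of $C^*$-algebras. For the present statement I would discard the extra information that $\tilde{\Phi}$ intertwines the subalgebras $C(\bar{X}_A), C(\bar{X}_B)$ and the torus actions $\gamma^A, \gamma^B$; only the bare isomorphism class is needed here.

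First I would recall that $\WRA$ and $\WRB$ are unital, so that $\tilde{\Phi}$, being a $*$-isomorphism of unital $C^*$-algebras, automatically carries $1_{\WRA}$ to $1_{\WRB}$. Applying the functor $K_0$ then yields a group isomorphism $\tilde{\Phi}_* : K_0(\WRA) \longrightarrow K_0(\WRB)$ with $\tilde{\Phi}_*([1_{\WRA}]) = [\tilde{\Phi}(1_{\WRA})] = [1_{\WRB}]$. Hence $\tilde{\Phi}_*$ is an isomorphism of pairs $(K_0(\WRA), [1_{\WRA}]) \cong (K_0(\WRB), [1_{\WRB}])$, that is, an isomorphism of abelian groups sending distinguished element to distinguished element. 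This is exactly the statement that the pair is a topological conjugacy invariant.

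There is essentially no obstacle at this stage: the entire content has already been absorbed into Theorem \ref{thm:main}, and what remains is the standard fact that a unital $*$-isomorphism induces a pointed isomorphism on $K_0$. The one point worth flagging is that the invariant must be understood up to pointed group isomorphism rather than as a literally fixed object, since two conjugate shifts need not produce the same group, only isomorphic pairs. Should one wish to record a canonical representative, one may combine this with the identification $(K_0(\WRA), [1_{\WRA}]) \cong (K_0(\OTA \otimes \OA), [E_A])$ noted in the introduction, which reduces everything to the computable datum $e_A$ in $\Z^N/(\id-A)\Z^N \otimes \Z^N/(\id-A^t)\Z^N$ studied in the subsequent sections.
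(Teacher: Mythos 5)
Your proposal is correct and matches the paper's own reasoning: the paper states this proposition as an immediate corollary of Theorem \ref{thm:main}, exactly as you do, by extracting from topological conjugacy the $C^*$-isomorphism $\tilde{\Phi}: \WRA \longrightarrow \widetilde{\R}_B$ of condition (vi) and applying functoriality of $K_0$ together with the fact that a $*$-isomorphism of unital $C^*$-algebras preserves units. Your remark that the invariance is up to pointed group isomorphism, and the optional identification with $(K_0(\OTA\otimes\OA),[E_A])$, are both consistent with how the paper proceeds in Proposition \ref{prop:KunitEA} and Theorem \ref{thm:Kinvariant}.
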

Recall that the projection $E_A$ is defined in \eqref{eq:EA}. 
We have
\begin{proposition}\label{prop:KunitEA}
There exists an isomorphism
$\Phi: \WRA\otimes\K \longrightarrow\OTA\otimes\OA\otimes\K$
such that the induced isomorphism
$\Phi_*: K_0(\WRA)\longrightarrow K_0(\OTA\otimes\OA)$
satisfies
$\Phi_*([1_{\WRA}]) = [E_A].$
\end{proposition}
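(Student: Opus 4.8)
The plan is to exhibit $\WRA$ as a full corner of $\mathcal{B}:=\OTA\otimes\OA$, obtain the stable isomorphism from Brown's theorem, and compute the induced $K_0$-map through the resulting Morita equivalence. First I would note that, since $\WRA=E_A\mathcal{B}E_A$, the unit of $\WRA$ is the projection $E_A$ itself, so $[1_{\WRA}]\in K_0(\WRA)$ is the class of $E_A$ regarded as the unit of this corner. Next I would verify that $E_A$ is full in $\mathcal{B}$: both $\OTA$ and $\OA$ are simple and nuclear (being Cuntz--Krieger algebras of irreducible non-permutation matrices), hence their minimal tensor product $\mathcal{B}$ is simple, and in a simple $C^*$-algebra every nonzero projection generates the whole algebra as a closed two-sided ideal.

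Being a full corner, $\WRA=E_A\mathcal{B}E_A$ is Morita equivalent to $\mathcal{B}$ through the imprimitivity bimodule $\mathcal{B}E_A$, and Brown's stabilization theorem produces an isomorphism $\Phi:\WRA\otimes\K\longrightarrow\mathcal{B}\otimes\K$ (this is the stable isomorphism already recorded in the Preliminaries). To evaluate $\Phi_*$ I would use the standard fact that the inclusion $\iota:E_A\mathcal{B}E_A\hookrightarrow\mathcal{B}$ of a full corner induces an isomorphism $\iota_*:K_0(\WRA)\to K_0(\mathcal{B})$, and that under the stability identifications $K_0(\WRA\otimes\K)\cong K_0(\WRA)$ and $K_0(\mathcal{B}\otimes\K)\cong K_0(\mathcal{B})$ the isomorphism $\Phi$ induces this same map, $\Phi_*=\iota_*$. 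Since $\iota$ carries the projection $E_A$ to itself, functoriality gives $\iota_*([1_{\WRA}])=\iota_*([E_A])=[E_A]$, whence $\Phi_*([1_{\WRA}])=[E_A]$.

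The step I expect to require the most care is the identity $\Phi_*=\iota_*$: one must ensure that the abstract stable isomorphism furnished by Brown's theorem realizes the corner-inclusion map on $K$-theory rather than some other automorphism of $K_0(\mathcal{B})$. I would settle this by appealing to the functoriality of the Brown--Green--Rieffel correspondence, under which a stable isomorphism and its underlying imprimitivity bimodule $\mathcal{B}E_A$ induce the same homomorphism on $K$-theory; the computation then reduces to the elementary observation that $\mathcal{B}E_A\otimes_{E_A\mathcal{B}E_A}q\cong\mathcal{B}q$ for a projection $q\in E_A\mathcal{B}E_A$, so that the Morita map sends $[q]$ to the class of $q$ as a projection in $K_0(\mathcal{B})$, recovering $\Phi_*([1_{\WRA}])=[E_A]$.
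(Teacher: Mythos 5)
Your proof is correct and follows the same skeleton as the paper's: observe that $E_A$ is a full projection in the simple $C^*$-algebra $\OTA\otimes\OA$, invoke Brown's theorem, and identify the induced $K_0$-map with the corner-inclusion map. The only substantive difference is how the step you flag as delicate is handled. The paper sidesteps any appeal to functoriality of the Brown--Green--Rieffel correspondence by using Brown's theorem in its concrete form: it produces an isometry $v_A$ in the multiplier algebra $M(\OTA\otimes\OA\otimes\K)$ with $v_A^*v_A=1$ and $v_Av_A^*=E_A\otimes 1$, and defines $\Phi=\Ad(v_A^*)$ outright. With this choice the $K$-theory statement is a one-line computation: for a rank one projection $p_0\in\K$, the projections $v_A^*(E_A\otimes p_0)v_A$ and $E_A\otimes p_0$ are Murray--von Neumann equivalent via the partial isometry $v_A^*(E_A\otimes p_0)$, so $\Phi_*([1_{\WRA}])=[E_A\otimes p_0]=[E_A]$. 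Your Morita-theoretic argument that $\Phi_*=\iota_*$ is valid (the bimodule computation $\mathcal{B}E_A\otimes_{E_A\mathcal{B}E_A}q\cong\mathcal{B}q$ is the right one), but since the proposition only asserts the existence of some $\Phi$ with the stated property, constructing it explicitly as $\Ad(v_A^*)$ makes that entire issue evaporate and keeps the proof self-contained.
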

\begin{proof}
Since the $C^*$-algebra $\OTA\otimes\OA$ is unital and simple,
the projection $E_A$ in \eqref{eq:EA}
is a full projection in $\OTA\otimes\OA$,
Brown's theorem \cite{Brown} tells us that 
there exists an isometry
$v_A$ in the multiplier algebra $M(\OTA\otimes\OA\otimes\K)$
of $\OTA\otimes\OA\otimes\K$
such that 
$v_A^* v_A = 1$ and 
$v_A v_A^* = E_A\otimes 1.$
Define an isomorphism
$\Phi:\WRA\otimes\K \longrightarrow\OTA\otimes\OA\otimes\K$
by $\Phi =\Ad(v_A^*)$.
Let $p_0$ be a rank one projection in $\K$.
We then have
\begin{equation*}
\Phi_*([1_{\WRA}]) 
=\Phi_*([E_A\otimes p_0]) 
=[v_A^*(E_A\otimes p_0)v_A]
=[E_A\otimes p_0]  
=[E_A]  
\end{equation*}
in $K_0(\OTA\otimes\OA)$.
\end{proof}
Hence 
the position
$[E_A]$ in $K_0(\OTA\otimes\OA)$
as well as the group $K_0(\OTA\otimes\OA)$
is invariant under topological conjugacy of topological Markov shift
$(\bar{X}_A,\bar{\sigma}_A)$.
By the K\"{u}nneth formulas \cite{RS} of the K-groups of the tensor product $C^*$-algebras,
we know that 
\begin{gather*}
K_0(\OTA\otimes\OA) \cong
(K_0(\OTA)\otimes K_0(\OA)) \oplus (K_1(\OTA)\otimes K_1(\OA)), \\
K_1(\OTA\otimes\OA) \cong
(K_0(\OTA)\otimes K_1(\OA)) \oplus (K_1(\OTA)\otimes K_0(\OA))
\oplus 
\Tor^{\Z}_1(K_0(\OTA), K_0(\OA)).
\end{gather*}
By the universal coefficient theorem for KK-groups, 
the K-group $K_i(\OTA\otimes\OA)$ is isomorphic to the KK-group
$KK^{i+1}(\OTA\otimes\OA)$ for $i=0,1,$
we see that  
\begin{equation*}
K_0(\WRA) \cong KK^1(\OTA,\OA),\qquad K_1(\WRA) \cong KK(\OTA,\OA).
\end{equation*}
Since $K_0(\OTA)$ is isomorphic to $K_0(\OA)$ and
$K_1(\OA)$ is the torsion free part of $K_0(\OA),$
the groups 
$K_i(\OTA\otimes\OA), i=0,1$
do not have any further information  than the group $K_0(\OA)$
by the above K\"{u}nneth formulas.
As
$K_0(\OA)=\Z^N/(\id-A^t)\Z^N,$
it is a direct sum
$\Z^n \oplus T_A$
of its torsion free part $\Z^n$
and its torsion part
$T_A=\Z/m_1\Z\oplus\cdots\oplus\Z/m_k\Z$, where 
$m_i|m_{i+1}$ with $m_i \ge 2, \,i=1,\dots,k-1.$
It is easy to see that
\begin{align*}
 &\Z^N/(\id-A)\Z^N \otimes\Z^N/(\id-A^t)\Z^N \\
\cong & \Z^{n^2} \oplus (T_A)^{n} \oplus (T_A)^{n} \oplus (T_A\otimes T_A) \\
\cong & \Z^{n^2} \oplus (\Z/m_1\Z)^{2n+2k-1} \oplus (\Z/m_2\Z)^{2n+2k-3} \oplus
\cdots\oplus(\Z/m_k\Z)^{2n+1}.
\end{align*}
Hence the groups
$K_i(\OTA\otimes\OA), i=0,1$
give us the same information as the group $K_0(\OA)$.


The position
$[E_A]$ in $K_0(\OTA\otimes\OA)$
 however gives us more information than the group
$K_0(\OA)$.
In the above  K\"{u}nneth formula for 
$K_0(\OTA\otimes\OA)$, the element
$[E_A]$ lives in $K_0(\OTA)\otimes K_0(\OA)$
as the element
$\sum_{i=1}^N[T_i^*T_i]\otimes [S_iS_i^*]
$
by definition of $E_A$.
Therefore  
the position $[E_A]$ of the projection $E_A$ in 
$K_0(\OTA)\otimes K_0(\OA)$ is invariant under topological conjugacy of
$(\bar{X}_A,\bar{\sigma}_A)$.
We set
the vector
$e_i = [0,\dots,0,\overset{i}{1},0,\dots,0]
$ for $i=1,\dots,N$.
We rephrase the above fact with the following theorem.
\begin{theorem}\label{thm:Kinvariant}
The position $[E_A]$ of the projection $E_A$ in 
$K_0(\OTA)\otimes K_0(\OA)$ is invariant under topological conjugacy of
$(\bar{X}_A,\bar{\sigma}_A)$.
Hence the position
$\sum_{i=1}^N[e_i]\otimes [e_i]$ in the group
$\Z^N/(\id-A)\Z^N \otimes
\Z^N/(\id-A^t)\Z^N
$
is invariant under topological conjugacy of
$(\bar{X}_A,\bar{\sigma}_A)$.
\end{theorem}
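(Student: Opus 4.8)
The plan is to obtain both assertions directly from the machinery already in place. For the invariance in the first sentence, I would combine Theorem \ref{thm:main} with Proposition \ref{prop:KunitEA}. A topological conjugacy between $(\bar{X}_A,\bar{\sigma}_A)$ and $(\bar{X}_B,\bar{\sigma}_B)$ gives condition (vi) of Theorem \ref{thm:main}, hence a unital $C^*$-isomorphism $\tilde{\Phi}\colon\WRA\to\WRB$. Being unital, $\tilde{\Phi}_*$ carries $[1_{\WRA}]$ to $[1_{\WRB}]$ in $K_0$; and by Proposition \ref{prop:KunitEA} the stable isomorphisms $\WRA\otimes\K\cong\OTA\otimes\OA\otimes\K$ and $\WRB\otimes\K\cong\OTB\otimes\OB\otimes\K$ identify these two classes with $[E_A]$ and $[E_B]$ respectively. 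The K\"unneth decomposition recalled above places $[E_A]$ in the direct summand $K_0(\OTA)\otimes K_0(\OA)$, since $E_A=\sum_{i=1}^N T_i^*T_i\otimes S_iS_i^*$ is a sum of tensor products of projections; hence its position in that summand is an invariant.

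The substance of the theorem is the explicit identification of $[E_A]$, which I would carry out using the standard Cuntz--Krieger $K$-theory isomorphisms $K_0(\OA)\cong\Z^N/(\id-A^t)\Z^N$ and $K_0(\OTA)\cong\Z^N/(\id-A)\Z^N$, under which the range projections go to the standard basis classes, $[S_iS_i^*]\mapsto[e_i]$ and $[T_iT_i^*]\mapsto[e_i]$. The consistency check I would record is that applying $K_0$ to $S_i^*S_i=\sum_{j=1}^N A(i,j)S_jS_j^*$, together with $[S_i^*S_i]=[S_iS_i^*]$ since $S_i$ is a partial isometry, yields $[S_iS_i^*]=\sum_{j=1}^N A(i,j)[S_jS_j^*]$, which is exactly the defining relation $[e_i]=\sum_{j=1}^N A(i,j)[e_j]$ of the quotient $\Z^N/(\id-A^t)\Z^N$; the analogous check for $A^t$ gives the identification for $\OTA$. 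Since $T_i$ is a partial isometry we also have $[T_i^*T_i]=[T_iT_i^*]=[e_i]$, so $[E_A]=\sum_{i=1}^N[T_i^*T_i]\otimes[S_iS_i^*]=\sum_{i=1}^N[e_i]\otimes[e_i]$ in $\Z^N/(\id-A)\Z^N\otimes\Z^N/(\id-A^t)\Z^N$. Combined with the invariance from the first paragraph, this is precisely the second sentence.

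The main obstacle I anticipate is purely bookkeeping: keeping the two transpose conventions straight, so that $[S_iS_i^*]$ is matched to the quotient by $\id-A^t$ (not $\id-A$) and $[T_i^*T_i]$ to the quotient by $\id-A$. The relation-matching check above is exactly what pins down the orientation and rules out an accidental transposition; once it is in place, everything is immediate, and no further analytic input is required, since nuclearity, the UCT and the K\"unneth formula have already supplied the isomorphism $K_0(\WRA)\cong K_0(\OTA\otimes\OA)$ together with the splitting of its summands.
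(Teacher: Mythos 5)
Your second paragraph --- the identification $[E_A]=\sum_{i=1}^N[T_i^*T_i]\otimes[S_iS_i^*]=\sum_{i=1}^N[e_i]\otimes[e_i]$ via $\epsilon_A([S_iS_i^*])=[e_i]$ and the relation-matching check that pins the quotient $\Z^N/(\id-A^t)\Z^N$ to $\OA$ and $\Z^N/(\id-A)\Z^N$ to $\OTA$ --- is correct and agrees with the paper. The first half of your first paragraph (conjugacy gives condition (vi) of Theorem \ref{thm:main}, hence a unital isomorphism $\tilde{\Phi}\colon\WRA\to\WRB$, and Proposition \ref{prop:KunitEA} converts $[1_{\WRA}]$ into $[E_A]$) is also fine and reproduces the paper's preliminary discussion. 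The gap is the sentence ``hence its position in that summand is an invariant.'' What your argument produces is an abstract isomorphism $\Lambda\colon K_0(\OTA\otimes\OA)\to K_0(\OTB\otimes\OB)$ with $\Lambda([E_A])=[E_B]$; the theorem asserts invariance of the pair $\bigl(K_0(\OTA)\otimes K_0(\OA),\,[E_A]\bigr)$, equivalently of $\bigl(\Z^N/(\id-A)\Z^N\otimes\Z^N/(\id-A^t)\Z^N,\,e_A\bigr)$, and for that one needs an isomorphism between the \emph{summands} carrying $e_A$ to $e_B$. Your $\Lambda$ comes from $\tilde{\Phi}$ composed with Brown-theorem stabilizations; it is not of the form $\phi_*\otimes\psi_*$ for maps of the tensor factors, and the K\"unneth splitting is not natural, so nothing forces $\Lambda$ to carry $K_0(\OTA)\otimes K_0(\OA)$ onto $K_0(\OTB)\otimes K_0(\OB)$. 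The point is vacuous only when $\det(\id-A)\ne 0$, for then $K_1(\OA)=K_1(\OTA)=0$ and the summand is the whole group; but when $\det(\id-A)=0$ both K\"unneth summands contain free parts of equal rank, an abstract isomorphism can mix them, and invariance of the position inside the distinguished summand simply does not follow from invariance of the position in $K_0(\OTA\otimes\OA)$. Knowing that both $[E_A]$ and $[E_B]$ happen to lie in their respective summands does not produce an isomorphism of the summand pairs.

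The paper's proof supplies exactly this missing ingredient by an entirely different, matrix-theoretic route. By Williams' theorem a topological conjugacy yields a strong shift equivalence, reduced to elementary steps $A=CD$, $B=DC$; by \cite[Theorem 4.6]{MaDocMath2017} there are isomorphisms $\SOA\cong\SOB$ and $\SOTA\cong\SOTB$ whose induced $K_0$-maps are the explicit multiplication maps $m_{C^t}\colon\Z^N/(\id-A^t)\Z^N\to\Z^M/(\id-B^t)\Z^M$ and $m_D\colon\Z^N/(\id-A)\Z^N\to\Z^M/(\id-B)\Z^M$. The tensor product $m_D\otimes m_{C^t}$ is then \emph{by construction} an isomorphism between the two tensor-product groups (so the summand issue never arises), and the computation
\begin{equation*}
\sum_{i=1}^N De_i\otimes C^te_i-\sum_{j=1}^M f_j\otimes f_j=\sum_{j=1}^M(B-\id)f_j\otimes f_j
\end{equation*}
shows that $(m_D\otimes m_{C^t})(e_A)=e_B$. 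Some construction of a decomposition-respecting isomorphism together with this computation is the actual content of Theorem \ref{thm:Kinvariant}; it is absent from your proposal, so the proof as written establishes only the weaker invariance of $\bigl(K_0(\OTA\otimes\OA),[E_A]\bigr)$.
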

\begin{proof}
We give its precise proof by  matrix method.
Let $A=[A(i,j)]_{i,j=1}^N, B=[B(i,j)]_{i,j=1}^M$
be irreducible non-permutation matrices such that the two-sided topological Markov shifts
$(\bar{X}_A,\bar{\sigma}_A)$ and
$(\bar{X}_A,\bar{\sigma}_A)$ are topological conjugate.
By William's theorem \cite{Williams},
the matrices $A,B$ are strong shift equivalent,
and hence we may assume that 
there exists two rectangular nonnegative integer matrices $C, D$ such that 
$A = CD, B = DC$.
By \cite[Theorem 4.6]{MaDocMath2017},
there exists an isomorphism
$\Phi:\SOA\longrightarrow \SOB$  
of $C^*$-algebras
such that the diagram
\begin{equation*}
\begin{CD}
K_0(\OA) @>\Phi_* >> K_0(\OB) \\
@V{\epsilon_A }VV  @VV{\epsilon_B}V \\
\Z^N/{(\id - A^{t})\Z^N} @> m_{C^t} >> \Z^M/{(\id - B^{t})\Z^M} 
\end{CD} 
\end{equation*}
commutes,
where $m_{C^t}$ is the isomorphism induced by multiplying the matrix
$C^t$ from the left
and $\epsilon_A: K_0(\OA) \rightarrow \Z^N/{(\id - A^{t})\Z^N}$
is an isomorphism defined by
$\epsilon_A([S_iS_i^*] ) = [e_i]$ the class of the vector $e_i$ in $\Z^N$.
Since the identities 
$A^t = D^t C^t, B^t = C^tD^t$
also hold, we similarly have  
an isomorphism
$\Phi^t:\SOTA\longrightarrow \SOTB$  
of $C^*$-algebras
such that the diagram 
\begin{equation*}
\begin{CD}
K_0(\OTA) @>\Phi^t_* >> K_0(\OTB) \\
@V{\epsilon_{A^t} }VV  @VV{\epsilon_{B^t}}V \\
\Z^N/{(\id - A)\Z^N} @> m_{D} >> \Z^M/{(\id - B)\Z^M} 
\end{CD} 
\end{equation*}
commutes.
We then have a commutative diagram:
\begin{equation*}
\begin{CD}
K_0(\OTA) \otimes K_0(\OA) @>\Phi^t_*\otimes \Phi_* >> K_0(\OTB)\otimes K_0(\OB) \\
@V{\epsilon_{A^t}\otimes\epsilon_A }VV  @VV{\epsilon_{B^t}\otimes\epsilon_B}V \\
\Z^N/{(\id - A)\Z^N}\otimes\Z^N/{(\id - A^{t})\Z^N} @> m_{D}\otimes m_{C^t} >> 
\Z^M/{(\id - B)\Z^M} \otimes\Z^M/{(\id - B^{t})\Z^M}.
\end{CD} 
\end{equation*}
We note that 
\begin{align*}
 \sum_{i=1}^N \epsilon_{A^t}([T_i^*T_i]) \otimes \epsilon_A([S_i S_i^*]) 
=&
\sum_{i=1}^N \epsilon_{A^t}([T_iT_i^*]) \otimes \epsilon_A([S_i S_i^*])\\
=&
\sum_{i=1}^N [e_i] \otimes [e_i] \quad 
\text{ in } \Z^N/{(\id - A)\Z^N}\otimes\Z^N/{(\id - A^{t})\Z^N},
\end{align*}
and set the specific element as 
\begin{equation}
e_A =\sum_{i=1}^N [e_i] \otimes [e_i] \quad 
\text{ in } \Z^N/{(\id - A)\Z^N}\otimes\Z^N/{(\id - A^{t})\Z^N}. \label{eq:eA}
\end{equation}
We will show that 
$(m_{D}\otimes m_{C^t})(e_A) = e_B.$
In the computation below, 
the vectors $e_i$, and $f_j$ 
denote the $N\times 1$ matrix
in $\Z^N$
whose $i$th component is one and zero elsewhere,
and
the $M\times 1$ matrix
in $\Z^M$
whose $j$th component is one and zero elsewhere,
respectively. 
We have
\begin{align*}
& \sum_{i=1}^N De_i \otimes C^t e_i \\
=& \sum_{i=1}^N 
{
\begin{bmatrix}
D(1,i)\\
D(2,i)\\
\vdots\\
D(M,i)
\end{bmatrix}
\otimes
\begin{bmatrix}
C(i,1)\\
C(i,2)\\
\vdots\\
C(i,M)
\end{bmatrix}}
 = \sum_{i=1}^N 
{\begin{bmatrix}
D(1,i)\\
D(2,i)\\
\vdots\\
D(M,i)
\end{bmatrix}
\otimes
\sum_{j=1}^M
C(i,j)f_j
} \\
=& 
\sum_{i=1}^N 
\sum_{j=1}^M
{\begin{bmatrix}
D(1,i)C(i,j)\\
D(2,i)C(i,j)\\
\vdots\\
D(M,i)C(i,j)
\end{bmatrix}
\otimes
f_j
} 
= 
\sum_{j=1}^M
{\begin{bmatrix}
\sum_{i=1}^N D(1,i)C(i,j)\\
\sum_{i=1}^N D(2,i)C(i,j)\\
\vdots\\
\sum_{i=1}^N D(N,i)C(i,j)
\end{bmatrix}
\otimes
f_j
} \\
=& 
\sum_{j=1}^M
{
\begin{bmatrix}
B(1,j)\\
B(2,j)\\
\vdots\\
B(N,j)
\end{bmatrix}
\otimes
f_j
= \sum_{j=1}^M Bf_j \otimes f_j.
} 
\end{align*}
Hence we have
\begin{equation}
 \sum_{i=1}^N De_i \otimes C^t e_i 
  -
  \sum_{j=1}^M f_j\otimes f_j 
=
\sum_{j=1}^M
(B -\id)f_j
\otimes f_j  \label{eq:DCB}
\end{equation}
so that 
\begin{equation*}
 (m_{D}\otimes m_{C^t})(e_A)  
= \sum_{i=1}^N [De_i] \otimes [C^te_i]
= \sum_{j=1}^M [f_j] \otimes [f_j] = e_B
\end{equation*}
thus proving the theorem.
\end{proof}
\begin{remark} \hspace{10cm}
\begin{enumerate}
\renewcommand{\theenumi}{\roman{enumi}}
\renewcommand{\labelenumi}{\textup{(\theenumi)}}
\item
The pair 
$
(\Z^N/(\id-A)\Z^N \otimes
\Z^N/(\id-A^t)\Z^N, e_A)
$
is a complete invariant for the isomorphism class of the 
$C^*$-algebra $\WRA$,
because the group structure of
$
\Z^N/(\id-A)\Z^N \otimes
\Z^N/(\id-A^t)\Z^N$
determines the groups $K_i(\OA), K_i(\OTA), i=0,1$
and also the pair determines the position
$[E_A]$ in $K_0(\OTA\otimes\OA)$.
Hence by Proposition \ref{prop:KunitEA},
the pair
$(K_0(\WRA), [E_A])$ and the group
$K_1(\WRA)$ are determined by
the pair 
$
(\Z^N/(\id-A)\Z^N \otimes
\Z^N/(\id-A^t)\Z^N, e_A).
$
\item
Since the projection $E_A$ is regarded as an element of the $C^*$-algebra
$\FTA\otimes\FA$ such that 
$C^*(G_A^a) = E_A(\FTA\otimes\FA)E_A$,
we have another topological conjugacy invariant
$(K_0(\FTA)\otimes K_0(\FA), [E_A])$
the position $[E_A]$ in the group $K_0(\FTA)\otimes K_0(\FA)$.
We will discuss this kind of invariants in 
\cite{MaPre2018a}.
\item
J. Cuntz in \cite{Cu4} 
studied the homotopy groups 
$\pi_n(\End(\OA\otimes\K))$
of the space of endomorphisms
$\End(\OA\otimes\K)$
of the $C^*$-algebras $\OA\otimes\K.$
He proved that  natural maps 
$\epsilon_n : \pi_n(\End(\OA\otimes\K))\longrightarrow KK^n(\OA,\OA)$ 
yield  isomorphisms,
and
defined an element denoted by $\epsilon_1(\lambda^A)$
in $\Ext(\OA)\otimes K_0(\OA),$ where
$\lambda^A$ denotes the gauge action $\alpha^A$ on $\OA.$
By the Kaminker--Putnam's K-theoretic duality between
$\Ext(\OA)$ and $K_0(\OTA)$ (\cite{KamPut}),
the element $\epsilon_1(\lambda^A)$ is regarded as an element in 
$K_0(\OTA)\otimes K_0(\OA)$.
Cuntz's observation in \cite{Cu4} shows that the element $\epsilon_1(\lambda^A)$
is noting but the above element $e_A$ under the identification
between 
$\Ext(\OA)\otimes K_0(\OA)$
and
$\Z^N/(\id-A)\Z^N \otimes\Z^N/(\id-A^t)\Z^N.$
He already states in \cite{Cu4} that 
the position $\epsilon_1(\lambda^A)$ in 
$\Z^N/(\id-A)\Z^N \otimes\Z^N/(\id-A^t)\Z^N$
is invariant under topological conjugacy of the topological Markov shift
$(\bar{X}_A,\bar{\sigma}_A)$. 
\end{enumerate}
\end{remark}

In \cite{Williams}, 
Williams introduced an equivalence relation in matrices called shift equivalence.
It is weaker than strong shift equivalence.
The shift equivalence relation  has been playing crucial r\^ole in the classification theory 
of symbolic dynamical systems (cf. \cite{LM}).   
Two matrices $A, B$ are said to be shift equivalent  
if there exist a positive integer  $\ell$ and rectangular nonnegative integer matrices $R,S$ such that 
\begin{equation}
AR = RB,\qquad SA = BS,\qquad A^{\ell} = RS,\qquad B^{\ell} = SR. \label{eq:SE}
\end{equation}
In the proof of the above theorem, we notice that 
 the following proposition holds.
\begin{proposition}\label{prop:SEinvariant}
The pair
$
(\Z^N/(\id-A)\Z^N \otimes
\Z^N/(\id-A^t)\Z^N, e_A)
$
is invariant under shift equivalence.
\end{proposition}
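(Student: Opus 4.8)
The plan is to reuse the matrix computation from the proof of Theorem \ref{thm:Kinvariant}, but with the shift equivalence data $R,S$ of \eqref{eq:SE} in place of the strong shift equivalence factorization $A=CD$, $B=DC$. Write $A$ as $N\times N$ and $B$ as $M\times M$, so that $R$ is $N\times M$ and $S$ is $M\times N$; let $e_i$ and $f_j$ denote the standard basis vectors of $\Z^N$ and $\Z^M$ as in the theorem. First I would check that left multiplication descends to the Bowen--Franks groups: from $SA=BS$ one gets $S(\id-A)=(\id-B)S$, so $m_S$ carries $(\id-A)\Z^N$ into $(\id-B)\Z^M$ and induces a homomorphism $m_S:\Z^N/(\id-A)\Z^N\to\Z^M/(\id-B)\Z^M$; symmetrically $AR=RB$ gives $m_R$ in the reverse direction. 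Since $A\equiv\id$ modulo $(\id-A)$, the relation $RS=A^\ell$ yields $m_R\circ m_S=m_{A^\ell}=\id$, and dually $m_S\circ m_R=m_{B^\ell}=\id$, so $m_S$ is an isomorphism with inverse $m_R$.

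Next I would transpose the four relations \eqref{eq:SE}. They assert precisely that $A^t$ and $B^t$ are shift equivalent via the pair $(S^t,R^t)$, because $A^tS^t=S^tB^t$, $R^tA^t=B^tR^t$, $(A^t)^\ell=S^tR^t$, and $(B^t)^\ell=R^tS^t$. The argument of the previous paragraph (applied to $\tilde S=R^t$ in the role of $S$) then produces an isomorphism $m_{R^t}:\Z^N/(\id-A^t)\Z^N\to\Z^M/(\id-B^t)\Z^M$ with inverse $m_{S^t}$. Tensoring gives the isomorphism
\[
m_S\otimes m_{R^t}:\Z^N/(\id-A)\Z^N\otimes\Z^N/(\id-A^t)\Z^N
\longrightarrow
\Z^M/(\id-B)\Z^M\otimes\Z^M/(\id-B^t)\Z^M,
\]
which is the exact analogue of the map $m_D\otimes m_{C^t}$ used in the strong shift equivalence case.

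It then remains to verify that this isomorphism sends $e_A$ to $e_B$. Evaluating on $e_A=\sum_{i=1}^N[e_i]\otimes[e_i]$ and writing $Se_i$ and $R^te_i$ in coordinates, the cross terms assemble the entries of $SR=B^\ell$, giving
\[
(m_S\otimes m_{R^t})(e_A)=\sum_{j,k=1}^M B^\ell(j,k)\,[f_j]\otimes[f_k].
\]
The key reduction, which plays the role of equation \eqref{eq:DCB}, is the observation that $\sum_{k}B^\ell(j,k)f_k=(B^t)^\ell f_j$ together with the congruence $(B^t)^\ell\equiv\id$ modulo $(\id-B^t)$, valid because $B^t\equiv\id$ in $\Z^M/(\id-B^t)\Z^M$. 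Applying this in the second tensor factor collapses the double sum to $\sum_{j}[f_j]\otimes[(B^t)^\ell f_j]=\sum_{j}[f_j]\otimes[f_j]=e_B$.

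The only genuinely delicate point is bookkeeping: one must keep the transposes and the left/right multiplications consistent so that the two tensor factors really carry the mutually transposed shift equivalences $A\sim B$ and $A^t\sim B^t$. Once that is arranged, the diagonal-collapse identity of the last paragraph is short, and it is exactly the mechanism that upgrades the invariance of the ambient group to invariance of the distinguished element. Since $m_S\otimes m_{R^t}$ is an isomorphism carrying $e_A$ to $e_B$, the pair $(\Z^N/(\id-A)\Z^N\otimes\Z^N/(\id-A^t)\Z^N,\,e_A)$ is a shift equivalence invariant, as claimed.
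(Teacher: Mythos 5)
Your proposal is correct and follows essentially the same route as the paper: both use the tensored left-multiplication maps $m_S\otimes m_{R^t}$ and reduce the verification $(m_S\otimes m_{R^t})(e_A)=e_B$ to the relation $SR=B^\ell$ together with the factorization of $B^\ell-\id$ by $B-\id$ (the paper collapses $\sum_j(B^\ell-\id)f_j\otimes f_j$ in the first tensor factor, whereas you collapse $\sum_j f_j\otimes((B^t)^\ell-\id)f_j$ in the second, a purely cosmetic difference). Your added verification that $m_S$ and $m_{R^t}$ are well-defined isomorphisms, via the intertwining relations and $m_{A^\ell}=m_{B^\ell}=\id$ on the quotients, is a welcome elaboration of what the paper merely asserts.
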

\begin{proof}
Suppose that matrices 
$A=[A(i,j)]_{i,j=1}^N, B=[B(i,j)]_{i,j=1}^M$ are shift equivalent.
Let $\ell$ be a positive integer and 
$R,S$ rectangular nonnegative integer matrices satisfying \eqref{eq:SE}.
Then the map
$m_S:\Z^N/(\id-A)\Z^N\longrightarrow \Z^M/(\id-B)\Z^M$
defined by the left multiplication of the matrix $S$ 
yields an isomorphism of the abelian groups.
We similarly see that 
$m_{R^t}:\Z^N/(\id-A^t)\Z^N\longrightarrow \Z^M/(\id-B^t)\Z^B$
defined by the left multiplication of the matrix $R^t$ 
yields an isomorphism of the abelian groups.
A similar computation proving the equality
\eqref{eq:DCB} in the proof of the preceding theorem
shows that the equality
\begin{equation*}
 \sum_{i=1}^N Se_i \otimes R^t e_i 
  -
  \sum_{j=1}^M f_j\otimes f_j 
=
\sum_{j=1}^M
(SR -\id)f_j
\otimes
f_j
=
\sum_{j=1}^M
(B^{\ell} -\id)f_j
\otimes
f_j
\end{equation*}
holds.
As $B^{\ell} -\id = (B-\id)(B^{\ell-1} +\cdots+ B + \id)$, 
we know that
$(m_S\otimes m_{R^t})(e_A) = e_B$
so that the map
$$
m_S\otimes m_{R^t}:\Z^N/(\id-A)\Z^N \otimes\Z^N/(\id-A^t)\Z^N
\longrightarrow
\Z^M/(\id-B)\Z^M \otimes\Z^M/(\id-B^t)\Z^M
$$
gives rise to an isomorphism between
$(\Z^N/(\id-A)\Z^N \otimes\Z^N/(\id-A^t)\Z^N, e_A)
$
and
$(\Z^M/(\id-B)\Z^M \otimes\Z^M/(\id-B^t)\Z^M, e_B).
$
\end{proof}

We will present an example showing that the invariant
 in the group
$(\Z^N/(\id-A)\Z^N \otimes
\Z^N/(\id-A^t)\Z^N, 
e_A)
$
is strictly finer than 
the K-group
$K_0(\OA)$.
 We note that Enomoto--Fujii--Watatani in \cite{EFW1981} 
listed a complete classification table of
Cuntz--Krieger algebras $\OA$ 
in terms of its K-groups for which its sizes of matrices are three.

Let $A =
\begin{bmatrix}
1 & 1& 1 \\
1 & 1& 1 \\
1 & 1& 1 
\end{bmatrix}.$
Since
$(\id - A) 
\begin{bmatrix}
l\\
m\\
n
\end{bmatrix}
=
\begin{bmatrix}
-m-n\\
-l-n\\
-l-m
\end{bmatrix},
$
the map
$$
\varphi:
\begin{bmatrix}
a\\
b\\
c
\end{bmatrix}
\in \Z^3
\longrightarrow
[a + b+ c] \in \Z/2\Z
$$
 induces an isomorphism
$\bar{\varphi}: \Z^3/(\id - A)\Z^3 \longrightarrow \Z/2\Z.$
Hence we have an isomorphism
\begin{equation*}
\tilde{\varphi}:=\bar{\varphi}\otimes\bar{\varphi}:
\Z^3/(\id - A)\Z^3 \otimes \Z^3/(\id - A^t)\Z^3 \longrightarrow
 \Z/2\Z\otimes\Z/2\Z \cong\Z/2\Z.
\end{equation*}
Since 
$\tilde{\varphi}(e_i\otimes e_i) 
=\bar{\varphi}(e_i) \otimes\bar{\varphi}(e_i)= 1\otimes 1,$
we then have
\begin{equation*}
\tilde{\varphi}(e_A) 
= [1\otimes 1] +[1\otimes 1] +[1\otimes 1] 
= [1] \quad \text{ in }\Z/2\Z 
\end{equation*}
so that  
$$
(\Z^3/(\id - A)\Z^3 \otimes \Z^3/(\id - A^t)\Z^3, e_A) \cong (\Z/2\Z, [1]).
$$

On the other hand,
let
$B =
\begin{bmatrix}
1 & 1& 1 \\
1 & 1& 0 \\
1 & 1& 0 
\end{bmatrix}$
and hence
$B^t =
\begin{bmatrix}
1 & 1& 1 \\
1 & 1& 1 \\
1 & 0& 0 
\end{bmatrix}.$
Since
$$
(\id - B) 
\begin{bmatrix}
l\\
m\\
n
\end{bmatrix}
=
\begin{bmatrix}
-m-n\\
-l\\
-l-m+n
\end{bmatrix},
\qquad
(\id - B^t) 
\begin{bmatrix}
l\\
m\\
n
\end{bmatrix}
=
\begin{bmatrix}
-m-n\\
-l-n\\
-l+n
\end{bmatrix},
$$
the maps
$$
\psi:
\begin{bmatrix}
a\\
b\\
c
\end{bmatrix}
\in \Z^3
\longrightarrow
[a + b+ c] \in \Z/2\Z,
\qquad
\psi^t:
\begin{bmatrix}
a\\
b\\
c
\end{bmatrix}
\in \Z^3
\longrightarrow
[b+ c] \in \Z/2\Z
$$
satisfy
$$
\psi(
(\id - B)
\begin{bmatrix}
l\\
m\\
n
\end{bmatrix}
)
=2(-l-m),
\qquad 
\psi^t(
(\id - B^t)
\begin{bmatrix}
l\\
m\\
n
\end{bmatrix}
)
= -2 l
$$
so that they induce isomorphisms
$$
\bar{\psi}: \Z^3/(\id - B)\Z^3 \longrightarrow \Z/2\Z,\qquad
 \bar{\psi}^t: \Z^3/(\id - B^t)\Z^3 \longrightarrow \Z/2\Z
$$
and
\begin{equation*}
\tilde{\psi}:=\bar{\psi}\otimes\bar{\psi}^t:
\Z^3/(\id - B)\Z^3 \otimes \Z^3/(\id - B^t)\Z^3 
\longrightarrow
 \Z/2\Z\otimes\Z/2\Z \cong\Z/2\Z.
\end{equation*}
Since 
$$
\tilde{\psi}(e_i\otimes e_i)
= 
\bar{\psi}(e_i)\otimes\bar{\psi}^t(e_i)
=
\begin{cases}
[ 1\otimes 0] =[0] & \text{ if } i=1,\\
[ 1\otimes 1 ] =[1] & \text{ if } i=2,3,
\end{cases}
$$
we then have
\begin{equation*}
\tilde{\psi}(e_A) 
= [1\otimes 0] +[1\otimes 1] +[1\otimes 1] = [0] \quad \text{ in }\Z/2\Z 
\end{equation*}
so that  
$$
(\Z^3/(\id - B)\Z^3 \otimes \Z^3/(\id - B^t)\Z^3, e_B) \cong (\Z/2\Z, [0]).
$$
\begin{proposition}\label{prop:example}
Let
$A =
\begin{bmatrix}
1 & 1& 1 \\
1 & 1& 1 \\
1 & 1& 1 
\end{bmatrix}$
and
$B =
\begin{bmatrix}
1 & 1& 1 \\
1 & 1& 0 \\
1 & 1& 0  
\end{bmatrix}.$
They satisfy
$K_0(\OA) \cong K_0(\OB)(\cong \Z/2\Z) $
and 
$\det(\id -A) = \det(\id -B) (= -2)$.
However 
\begin{gather*}
(\Z^3/(\id - A)\Z^3 \otimes \Z^3/(\id - A^t)\Z^3, e_A) \cong (\Z/2\Z, [1]), \\
(\Z^3/(\id - B)\Z^3 \otimes \Z^3/(\id - B^t)\Z^3, e_B) \cong (\Z/2\Z, [0]).
\end{gather*}
\end{proposition}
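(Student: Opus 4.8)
The plan is to recognize that most of the work for Proposition~\ref{prop:example} has already been carried out in the computations immediately preceding the statement, so that the proof amounts to assembling those computations together with two short verifications. Specifically, the two pairs $(\Z^3/(\id-A)\Z^3 \otimes \Z^3/(\id-A^t)\Z^3, e_A)$ and $(\Z^3/(\id-B)\Z^3 \otimes \Z^3/(\id-B^t)\Z^3, e_B)$ have been identified via the explicit isomorphisms $\tilde{\varphi}=\bar{\varphi}\otimes\bar{\varphi}$ and $\tilde{\psi}=\bar{\psi}\otimes\bar{\psi}^t$ onto $\Z/2\Z\otimes\Z/2\Z\cong\Z/2\Z$, where it is shown that $\tilde{\varphi}(e_A)=[1]$ while $\tilde{\psi}(e_B)=[0]$. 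Thus the two pointed groups are already known to be $(\Z/2\Z,[1])$ and $(\Z/2\Z,[0])$, which are nonisomorphic. What remains is only to verify the two claimed agreements: that the $K_0$-groups coincide and that the determinants of $\id-A$ and $\id-B$ coincide.

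For the $K_0$-groups I would use $K_0(\OA)=\Z^3/(\id-A^t)\Z^3$ and $K_0(\OB)=\Z^3/(\id-B^t)\Z^3$. Since $A$ is symmetric, $A^t=A$, so $K_0(\OA)$ is precisely the group that $\bar{\varphi}$ was shown to carry onto $\Z/2\Z$; likewise $K_0(\OB)$ is exactly the group that $\bar{\psi}^t$ was shown to carry onto $\Z/2\Z$. Both are therefore isomorphic to $\Z/2\Z$, giving $K_0(\OA)\cong K_0(\OB)$. For the determinants I would simply expand the two $3\times 3$ matrices $\id-A$ and $\id-B$ directly; in each case the value is $-2$, so $\det(\id-A)=\det(\id-B)=-2$.

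Assembling these pieces finishes the proof. The \emph{content} of the example, and the only step that carries real information, is that although the underlying groups and the Bowen--Franks determinants agree, the position of the distinguished element $e_A=\sum_{i=1}^3[e_i]\otimes[e_i]$ differs: summing the images $\bar{\varphi}(e_i)\otimes\bar{\varphi}(e_i)$ lands at $[1]$ for $A$, whereas summing $\bar{\psi}(e_i)\otimes\bar{\psi}^t(e_i)$ lands at $[0]$ for $B$ because the class at $i=1$ vanishes. There is no genuine obstacle beyond bookkeeping; the only step requiring care is tracking the images of the standard basis vectors through the chosen isomorphisms and summing correctly in $\Z/2\Z\otimes\Z/2\Z$, since it is exactly this sum that decides whether the distinguished class is trivial. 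Because $(\Z/2\Z,[1])\not\cong(\Z/2\Z,[0])$ as pointed groups, the finer invariant separates $A$ and $B$ even though $K_0$ and the determinant do not.
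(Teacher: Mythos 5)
Your proposal is correct and matches the paper's treatment: the paper likewise presents Proposition \ref{prop:example} as a summary of the explicit computations carried out immediately before it, where $\bar{\varphi}\otimes\bar{\varphi}$ sends $e_A$ to $[1]$ and $\bar{\psi}\otimes\bar{\psi}^t$ sends $e_B$ to $[0]$ in $\Z/2\Z$, and it gives no separate proof beyond those computations. Your two added verifications (that $K_0(\mathcal{O}_A)\cong K_0(\mathcal{O}_B)\cong\Z/2\Z$ via $K_0(\mathcal{O}_A)=\Z^3/(\id-A^t)\Z^3$, and that both determinants equal $-2$ by direct expansion) are exactly the routine bookkeeping the paper leaves implicit.
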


In the rest of this section, we will deal with square matrices 
with entries in nonnegative integers.
Such matrices are called nonnegative integral matrices.
A nonnegative integral matrix is said to be essential 
if none of its rows or columns is zero vector.
Let $A =[A(i,j)]_{i,j=1}^N$
be an $N\times N$ essential nonnegative integral matrix.
The matrix defines a finite directed graph $G_A=(V_A, E_A)$ with $N$ vertices
$V_A=\{v_1,\dots,v_N\}$
and $A(i,j)$ directed edges from the vertex $v_i$ to the vertex $v_j$
for $i,j=1,\dots,N$.
The directed edges are denoted by
$\{a_1,\dots,a_{N_A}\} =E_A$.
For an edge $a_k\in E_A$, denote by 
$s(a_k), t(a_k)$ its source vertex, terminal vertex, respectively.
The directed graph $G_A$ has the $N_A\times N_A$ transition matrix
$A^G=[A^G(i,j)]_{i,j=1}^{N_A}$ of edges defined by
\begin{equation*}
A^G(i,j) =
\begin{cases}
1 & \text{ if } t(a_i) = s(a_j),\\
0 & \text{ otherwise, }
\end{cases}
\qquad
i, j=1,\dots,N_A.
\end{equation*}
As in \cite[Remark 2.16]{CK} and \cite[Section 4]{Ro},
the Cuntz--Krieger algebra $\OA$
for the nonnegative integral matrix $A$
is defined to be the Cuntz--Krieger algebra 
$\mathcal{O}_{A^G}$
for the matrix $A^G$ with entries in $\{0,1\}$.
It is well-known that there exist rectangular nonnegative integral
matrices $R, S$ such that $A = RS, A^G = SR$ (cf. \cite{LM}).
As in \cite[Lemma 4.5]{MaDocMath2017},
the left multiplication of the matrix $S^t$
induces an isomorphism
$m_{S^t}: \Z^{N_A}/(\id-(A^G)^t)\Z^{N_A} 
\longrightarrow
\Z^N/(\id-A^t)\Z^N$
such that 
$m_{S^t}([1_{N_A}]) = [1_N]$,
where 
$ 1_{N_A} =[1,\dots,1] \in \Z^{N_A}, 
  1_N =[1,\dots,1]\in \Z^N$.
Let $1_{\OA}$ be the unit of the Cuntz--Krieger algebra $\OA$.
By \cite[Proposition 3.1]{Cu3},
there exists an isomorphism from $K_0(\mathcal{O}_{A^G})$ to
$\Z^{N_A}/(\id-(A^G)^t)\Z^{N_A} $
that sends the class $[1_{\OA}]$ of 
$1_{\OA}$ to the class  $[1_{N_A}]$ of $1_{N_A}$.
Hence for a nonnegative integral matrix $A$,
there exists an isomorphism from
$K_0(\OA)$ to  $\Z^N/(\id-A^t)\Z^N$
that sends the class of the unit $[1_{\OA}]$ 
of $\OA$ to the class $[1_N]$ of $ 1_N$.
We define the element
$[e_A]$ in the group
$\Z^N/(\id-A)\Z^N \otimes
\Z^N/(\id-A^t)\Z^N
$
by the same formula 
\eqref{eq:eA}
as that for  matrices with entries in $\{0,1\}$.
We notice the following lemma.
\begin{lemma}
There exists an isomorphism $\Phi$ of groups
from 
$\Z^{N_A}/(\id-A^G)\Z^{N_A} \otimes
\Z^{N_A}/(\id-(A^G)^t)\Z^{N_A}
$
onto
$\Z^N/(\id-A)\Z^N \otimes
\Z^N/(\id-A^t)\Z^N
$
such that 
$\Phi(e_{A^G}) = e_A$. 
\end{lemma}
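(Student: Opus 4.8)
The plan is to realise $\Phi$ as a tensor product of two Bowen--Franks isomorphisms and to recognise that the identity $\Phi(e_{A^G}) = e_A$ is exactly an instance of Proposition \ref{prop:SEinvariant}. The crucial preliminary observation is that the factorisation $A = RS$, $A^G = SR$ exhibits $A$ and $A^G$ as shift equivalent with lag $\ell = 1$. Indeed, one checks directly that
\begin{equation*}
A^G S = (SR)S = S(RS) = SA, \qquad R A^G = R(SR) = (RS)R = AR,
\end{equation*}
and these two intertwining relations, together with $(A^G)^1 = SR$ and $A^1 = RS$, are precisely the four relations \eqref{eq:SE} defining shift equivalence of the pair $(A^G, A)$ with lag $1$.

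Granting this, I would set $\Phi = m_R \otimes m_{S^t}$, where $m_R$ is left multiplication by the $N\times N_A$ matrix $R$ and $m_{S^t}$ is left multiplication by $S^t$; the latter is exactly the isomorphism $m_{S^t}\colon \Z^{N_A}/(\id - (A^G)^t)\Z^{N_A} \to \Z^N/(\id - A^t)\Z^N$ recalled just before the statement (the one sending $[1_{N_A}]$ to $[1_N]$). The relation $R A^G = AR$ gives $R(\id - A^G) = (\id - A)R$, so $m_R$ descends to a homomorphism $\Z^{N_A}/(\id - A^G)\Z^{N_A} \to \Z^N/(\id - A)\Z^N$, and transposing $A^G S = SA$ shows likewise that $m_{S^t}$ is well defined. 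That both are \emph{isomorphisms} is elementary: since $v \equiv Av \pmod{(\id - A)\Z^N}$ the map $m_A$ is the identity on $\Z^N/(\id - A)\Z^N$, whence $m_R m_S = m_{RS} = m_A = \id$ and $m_S m_R = m_{A^G} = \id$ (similarly for the transposes), so $m_R$ and $m_{S^t}$ have inverses $m_S$ and $m_{R^t}$. Although Proposition \ref{prop:SEinvariant} is phrased for matrices with entries in $\{0,1\}$, its proof is purely linear-algebraic and never uses that hypothesis, so it applies verbatim to the nonnegative integral matrices $A$ and $A^G$ and already yields that $\Phi$ carries $e_{A^G}$ to $e_A$.

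For completeness I would reproduce the single computation behind the conclusion. Writing $e_k$ for the standard basis of $\Z^{N_A}$ and $f_j$ for that of $\Z^N$, so that $e_A = \sum_j [f_j]\otimes[f_j]$, one has
\begin{equation*}
\sum_{k=1}^{N_A} R e_k \otimes S^t e_k
= \sum_{i,j=1}^N (RS)(i,j)\, f_i \otimes f_j
= \sum_{j=1}^N (A f_j)\otimes f_j,
\end{equation*}
whence $\sum_{k} R e_k \otimes S^t e_k - \sum_{j} f_j \otimes f_j = \sum_{j}(A - \id)f_j \otimes f_j$. Each $(A - \id)f_j$ lies in $(\id - A)\Z^N$, so the right-hand side vanishes in the quotient and $\Phi(e_{A^G}) = \sum_j [f_j]\otimes[f_j] = e_A$. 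There is no serious obstacle here; the only point needing attention is the isomorphism property of $m_R$ and $m_{S^t}$, which is the standard fact that shift equivalent matrices induce isomorphic cokernels and is in any case subsumed in Proposition \ref{prop:SEinvariant}.
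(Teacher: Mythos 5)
Your proof is correct and is essentially the paper's own argument: the paper also takes $R,S$ with $A=RS$, $A^G=SR$ and defines the isomorphism $m_R\otimes m_{S^t}$, with $\Phi(e_{A^G})=e_A$ following from the same computation $\sum_k Re_k\otimes S^te_k-\sum_j f_j\otimes f_j=\sum_j(A-\id)f_j\otimes f_j$ (the paper cites it as ``as in the proof of Theorem \ref{thm:Kinvariant}'', which is the lag-one instance of the shift-equivalence argument you invoke). Your additional verifications (well-definedness, and invertibility via $m_Rm_S=m_A=\id$ on the quotient) simply make explicit what the paper leaves implicit.
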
 
\begin{proof}
Let $R,S$ be  rectangular nonnegative integral
matrices $R, S$ satisfying $A = RS, A^G = SR$.
As in the proof of Theorem \ref{thm:Kinvariant},
the isomorphism
$
m_R\otimes m_{S^t}: \Z^{N_A}/(\id-A^G)\Z^{N_A} \otimes
\Z^{N_A}/(\id-(A^G)^t)\Z^{N_A}
\longrightarrow\Z^N/(\id-A)\Z^N \otimes
\Z^N/(\id-A^t)\Z^N
$
satisfies
$m_R\otimes m_{S^t}(e_{A^G}) = e_A$.
\end{proof}
We may obtain the following proposition
in a similar way to the proof of Proposition \ref{prop:SEinvariant}.
 \begin{proposition}\label{prop:NNSEinvariant}
Let $A =[A(i,j)]_{i,j=1}^N$
be an $N\times N$ essential nonnegative integral matrix.
The pair
$
(\Z^N/(\id-A)\Z^N \otimes
\Z^N/(\id-A^t)\Z^N, e_A)
$
is invariant under shift equivalence.
\end{proposition}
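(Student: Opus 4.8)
The plan is to reproduce the proof of Proposition \ref{prop:SEinvariant} essentially verbatim, the crucial observation being that that argument never used the hypothesis that the matrices have entries in $\{0,1\}$: it relied only on the purely algebraic shift-equivalence relations \eqref{eq:SE} together with the matrix identity computed in \eqref{eq:DCB}. Both of these are available for arbitrary essential nonnegative integral matrices, and the distinguished element $e_A$ has been defined for such matrices by the same formula \eqref{eq:eA}. So suppose $A$ is shift equivalent to an $M\times M$ essential nonnegative integral matrix $B$, and fix a lag $\ell$ and rectangular nonnegative integral matrices $R,S$ satisfying \eqref{eq:SE}.

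First I would record that the left multiplications $m_S$ and $m_{R^t}$ descend to homomorphisms
\begin{gather*}
m_S:\Z^N/(\id-A)\Z^N \longrightarrow \Z^M/(\id-B)\Z^M, \\
m_{R^t}:\Z^N/(\id-A^t)\Z^N \longrightarrow \Z^M/(\id-B^t)\Z^M,
\end{gather*}
using $S(\id-A)=(\id-B)S$ (from $SA=BS$) and $R^t(\id-A^t)=(\id-B^t)R^t$ (from $AR=RB$, transposed). These are isomorphisms with inverses induced by $m_R$ and $m_{S^t}$: indeed $m_R m_S=m_{RS}=m_{A^\ell}$ and $m_S m_R=m_{SR}=m_{B^\ell}$, and since $Av-v\in(\id-A)\Z^N$ for every $v$ the multiplication $m_A$ acts as the identity on $\Z^N/(\id-A)\Z^N$, whence so does $m_{A^\ell}$; likewise $m_{B^\ell}=\id$ on $\Z^M/(\id-B)\Z^M$, so $m_S$ is an isomorphism. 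The transpose case is symmetric.

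Next I would carry out the same computation as in \eqref{eq:DCB}, now with $S,R^t$ in place of $D,C^t$. Since $SR=B^\ell$ by \eqref{eq:SE}, the computation yields
\begin{align*}
\sum_{i=1}^N Se_i\otimes R^t e_i - \sum_{j=1}^M f_j\otimes f_j
&= \sum_{j=1}^M (SR-\id)f_j\otimes f_j \\
&= \sum_{j=1}^M (B^\ell-\id)f_j\otimes f_j.
\end{align*}
Because $B^\ell-\id=(B-\id)(B^{\ell-1}+\cdots+B+\id)$, each vector $(B^\ell-\id)f_j$ lies in $(\id-B)\Z^M$ and hence vanishes in the first tensor factor; thus every summand on the right is $0$ in $\Z^M/(\id-B)\Z^M\otimes\Z^M/(\id-B^t)\Z^M$. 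Consequently $(m_S\otimes m_{R^t})(e_A)=\sum_{j=1}^M[f_j]\otimes[f_j]=e_B$, and $m_S\otimes m_{R^t}$ is the desired isomorphism of pairs.

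I do not expect a genuine obstacle here; the content is entirely formal once one notices that the proof of Proposition \ref{prop:SEinvariant} is insensitive to the entries being $0$ or $1$. The only point deserving care is bookkeeping: one must confirm that the $e_A$ entering the computation --- defined by \eqref{eq:eA} directly in terms of the standard basis of $\Z^N$ --- is the same element transported by the preceding lemma. Since the computation above refers solely to $R,S$ and the relations \eqref{eq:SE}, and never to the edge matrix $A^G$, this compatibility is automatic, and the proposition follows.
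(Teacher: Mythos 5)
Your proposal is correct and is exactly the paper's argument: the paper proves Proposition \ref{prop:NNSEinvariant} simply by invoking the proof of Proposition \ref{prop:SEinvariant}, which (as you observe) is purely formal in the relations \eqref{eq:SE} and the computation \eqref{eq:DCB}, and never uses that the entries lie in $\{0,1\}$. Your extra verifications --- that $m_S$ and $m_{R^t}$ descend to the quotients and are isomorphisms because $m_{A^\ell}$ and $m_{B^\ell}$ act as the identity there, and that the edge-matrix lemma plays no role --- only make explicit what the paper leaves implicit.
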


 We will  present an example of nonnegative integral matrix $A$
such that 
the two $C^*$-algebras
$\WRA$ and $\OTA\otimes\OA$ are not isomorphic.

Let $A =
\begin{bmatrix}
4 & 1 \\
1 & 0 
\end{bmatrix}.$
Since
$(\id - A) 
\begin{bmatrix}
l\\
m
\end{bmatrix}
=
\begin{bmatrix}
-3l -m\\
-l+m
\end{bmatrix},
$
the map
$
\varphi:
\begin{bmatrix}
l\\
m
\end{bmatrix}
\in \Z^2
\longrightarrow
[l +m] \in \Z/4\Z
$
 induces  isomorphisms
$\bar{\varphi}: \Z^2/(\id - A)\Z^2 \longrightarrow \Z/4\Z$
and
\begin{equation*}
\tilde{\varphi}:=\bar{\varphi}\otimes\bar{\varphi}:
\Z^2/(\id - A)\Z^2 \otimes \Z^2/(\id - A^t)\Z^2 \longrightarrow
 \Z/4\Z\otimes\Z/4\Z \cong\Z/4\Z.
\end{equation*}
Since $\tilde{\varphi}(e_i\otimes e_i) 
=\bar{\varphi}(e_i) \otimes\bar{\varphi}(e_i)= 1\otimes 1,$
we have
\begin{equation*}
\tilde{\varphi}(e_A) 
= [1\otimes 1] +[1\otimes 1] 
= [2] \quad \text{ in }\Z/4\Z. 
\end{equation*}
On the other hand,
we have
$\tilde{\varphi}([1_2]\otimes [1_2]) 
=\varphi(
\begin{bmatrix}
1\\
1
\end{bmatrix}
)
\otimes
\varphi^t(
\begin{bmatrix}
1\\
1
\end{bmatrix}
) 
= [2\otimes 2]
=[0]$
in $\Z/4\Z$.
We thus have
\begin{gather*}
(\Z^2/(\id - A)\Z^2 \otimes \Z^2/(\id - A^t)\Z^2, e_A) \cong (\Z/4\Z, [2]), \\
(\Z^2/(\id - A)\Z^2 \otimes \Z^2/(\id - A^t)\Z^2, [1_2]\otimes[1_2]) \cong (\Z/4\Z, [0]),
\end{gather*}
so that 
the algebras
$\WRA$ and $\OTA\otimes\OA$ are not isomorphic
by classification theorem of 
unital, purely infinite, simple nuclear $C^*$-algebras 
(\cite{Kirchberg}, \cite{Phillips}).

 
\section{KMS states on $\WRA$}
In this section, we will study KMS states on the $C^*$-algebra $\WRA$
for the diagonal action $\delta^A$. 
Following after \cite{BR}, 
 we will define KMS states in the following way.
For a one-parameter automorphism group $\alpha_t, t \in {\mathbb{R}}$
on a $C^*$-algebra $\mathcal{A}$ and a real number $\gamma \in {\mathbb{R}},$
 a state $\psi$ on $\mathcal{A}$ is called a KMS state for the action $\alpha$
if $\psi$ satisfies 
\begin{equation}
\psi(X \alpha_{i\gamma}(Y)) = \psi(Y X) \label{eq:KMSstate}
\end{equation}
for all $X, Y$ in a norm dense $\alpha$-invariant $*$-subalgebra of the set of entire analytic elements for $\alpha$ in $\mathcal{A}$.
The value $\gamma$ is called the inverse temperature and
the condition \eqref{eq:KMSstate} is called the KMS condition.

Let $\beta$ 
be the Perron--Frobenius eigenvalue for an irreducible matrix $A$
with entries in $\{0,1\}$.
It has been shown in \cite{EFW1984} that 
KMS states for gauge action on Cuntz--Krieger algebra $\OA$
exists if and only if its inverse temperature is $\log \beta$,
and the admitted KMS state is unique.
Let us denote by
$\varphi$ the unique KMS state for gauge action on $\OA$.  
Similarly we denote by 
$\varphi^t$ the unique KMS state for gauge action on $\OTA$
As in \cite{EFW1984}, the vector
$\begin{bmatrix}
\varphi(S_1S_1^*)\\
\vdots \\
\varphi(S_NS_N^*)
\end{bmatrix}
$
gives rise to the unique normalized positive eigenvector 
of $A$ for the eigenvalue $\beta$.
Hence we have
\begin{equation*}
\beta
\begin{bmatrix}
\varphi(S_1S_1^*)\\
\vdots \\
\varphi(S_NS_N^*)
\end{bmatrix}
=
\begin{bmatrix}
A(1,1)&\cdots&A(1,N)\\
\vdots&        &\vdots\\
A(N,1)&\cdots&A(N,N)
\end{bmatrix}
\begin{bmatrix}
\varphi(S_1S_1^*)\\
\vdots \\
\varphi(S_NS_N^*)
\end{bmatrix}
=
\begin{bmatrix}
\varphi(S_1^*S_1)\\
\vdots \\
\varphi(S_N^*S_N)
\end{bmatrix}
\end{equation*}
so that 
$\beta\varphi(S_i S_i^*) = \varphi(S_i^* S_i), i=1,\dots,N$
and more generally
\begin{equation*}
\beta^m\varphi(S_{\mu_1 \cdots \mu_m}S_{\mu_1 \cdots \mu_m}^*) 
= \varphi(S_{\mu_1 \cdots \mu_m}^*S_{\mu_1 \cdots \mu_m}),
\quad (\mu_1,\dots,\mu_m) \in B_m(\bar{X}_A).
\end{equation*}
Therefore we have
\begin{equation}
\varphi(S_{\mu_m} S_{\mu_m}^*) 
=\frac{1}{\beta}\varphi(S_{\mu_m}^* S_{\mu_m})
=\frac{1}{\beta}\varphi(S_{\mu_1\cdots\mu_m}^* S_{\mu_1\cdots\mu_m})
=\beta^{m-1}\varphi(S_{\mu_1\cdots\mu_m} S_{\mu_1\cdots\mu_m}^*)\label{eq:varphis}
\end{equation}
and similarly
\begin{equation}
\varphi^t(T_{\xi_1}T_{\xi_1}^*) 
=\beta^{k-1}\varphi^t(T_{\xi_k\cdots\xi_1}T_{\xi_k\cdots\xi_1}^*),
\qquad (\xi_k,\dots,\xi_1)\in B_k(\bar{X}_{A^t}).
\label{eq:varphit}
\end{equation}
Let
$[a_i]_{i=1}^N
$
and
$[b_i]_{i=1}^N
$
be the positive eigenvectors of $A$ and $A^t$ for the eigenvalue $\beta$,
respectively
satisfying 
\begin{equation*}
\sum_{i=1}^N a_i b_i =1.
\end{equation*}
For  admissible words
$\xi=(\xi_1,\dots,\xi_k)\in B_k(\bar{X}_A)$
and
$\nu=(\nu_1,\dots,\nu_n) \in B_n(\bar{X}_A)$,
put
$\xi\nu =(\xi_1, \dots, \xi_k,\nu_1,\dots,\nu_n) \in B_{k+n}(\bar{X}_A)$.
For $i\in \Z$,
let us denote by $U_{[\xi\nu]_i^{i+k+n-1}}$ the cylinder set of $\bar{X}_A$
such that 
 \begin{equation*}
U_{[\xi\nu]_i^{i+k+n-1}} 
= \{ (x_j)_{j\in \Z}\in \bar{X}_A \mid
x_i  =\xi_1, \dots, x_{i+k-1} =\xi_k, 
x_{i+k} = \nu_1,\dots, x_{i+k+n-1} =\nu_n\}. 
\end{equation*}
 In  \cite{Parry}, W. Parry proved that there exists a unique invariant measure $\mu$ on 
$\bar{X}_A$ 
of maximal entropy. 
It is called the Parry measure, which satisfies the following equality
\begin{equation}
\mu(U_{[\xi\nu]_i^{i+k+n-1}}) = b_{\xi_1}a_{\nu_n}\beta^{-(k+n-1)}, \quad i \in \Z.
\label{eq:Parry}
\end{equation} 
Let
$C^*(G_A^a)$ be the groupoid $C^*$-algebra
 for the groupoid $G_A^a$.
As in the Putnam's paper \cite{Putnam1} and his lecture note \cite{Putnam2},
the algebra is an AF-algebra with a tracial state
$\Tr$ defined by
\begin{equation*}
\Tr(f) = \int_{\bar{X}_A} f(x,x) d\mu(x) \qquad 
\text{ for } f \in C_c(G_A).
\end{equation*}
 Let us define a state $\tilde{\varphi}$ on $\WRA$ by setting
\begin{equation*}
\tilde{\varphi} 
=\frac{1}{\sum_{j=1}^N\varphi^t(T_jT_j^*)\varphi(S_j^*S_j)}\varphi^t\otimes\varphi
\quad
\text{ on }
\WRA =E_A(\OTA\otimes\OA)E_A.
\end{equation*}
Since
$(\varphi^t\otimes\varphi)(E_A) =
\sum_{j=1}^N\varphi^t(T_jT_j^*)\varphi(S_j^*S_j),
$
we know that $\tilde{\varphi}$
gives rise to a state on $\WRA$. 
We know more about $\tilde{\varphi}$ in the following way.
\begin{proposition}\label{prop:kms}
\begin{enumerate}
\renewcommand{\theenumi}{\roman{enumi}}
\renewcommand{\labelenumi}{\textup{(\theenumi)}}
\item
The state $\tilde{\varphi}$ is a KMS state on $\WRA$ for the diagonal action $\delta^A$
at the inverse temperature $\log\beta$.
\item
The restriction of  $\tilde{\varphi}$ to the subalgebra $C(\bar{X}_A)$ coincides with the Parry measure $\mu$ on $\bar{X}_A$.
\item The formula
\begin{equation}
\tilde{\varphi}(Y) = \Tr\left(\iint_{\T^2} \gamma_{r,s}^A(Y)dr ds\right)  \qquad
\text{ for } Y \in \WRA \label{eq:trace}
\end{equation}
holds.
\end{enumerate}
\end{proposition}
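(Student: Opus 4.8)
The plan is to treat the three parts in order, reducing (i) and (iii) to standard facts about KMS states, conditional expectations and tensor products, and reserving the genuine computation for (ii). For (i), the idea is that $\tilde\varphi$ inherits the KMS property from $\varphi$ and $\varphi^t$ through two reductions. Since $\varphi$ is KMS for the gauge action $\alpha^A$ at inverse temperature $\log\beta$ and $\varphi^t$ is KMS for $\alpha^{A^t}$ at the same inverse temperature, the product state $\varphi^t\otimes\varphi$ is KMS for $\delta^A=\alpha^{A^t}\otimes\alpha^A$ on $\OTA\otimes\OA$ at $\log\beta$; I would check this on elementary tensors $Y_1\otimes Y_2$ of analytic elements, where $\delta^A_{i\log\beta}(Y_1\otimes Y_2)=\alpha^{A^t}_{i\log\beta}(Y_1)\otimes\alpha^A_{i\log\beta}(Y_2)$, so that the KMS identity factors into the two given ones. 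Because $\delta^A_t(E_A)=E_A$ and $(\varphi^t\otimes\varphi)(E_A)>0$, the normalized restriction $\tilde\varphi$ to the invariant hereditary subalgebra $\WRA=E_A(\OTA\otimes\OA)E_A$ is again KMS for $\delta^A$ at $\log\beta$: for $X=E_AXE_A$ and $Y=E_AYE_A$ one has $\delta^A_t(Y)=E_A\delta^A_t(Y)E_A$, so the ambient KMS identity restricts verbatim, the common factor $(\varphi^t\otimes\varphi)(E_A)^{-1}$ cancelling on both sides.

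For (ii), I would evaluate $\tilde\varphi$ on the diagonal projections $T_{\bar\xi}T_{\bar\xi}^*\otimes S_\mu S_\mu^*$ generating $C(\bar X_A)$, where $\bar\xi=(\xi_k,\dots,\xi_1)$, $\mu=(\mu_1,\dots,\mu_m)$ and $A(\xi_k,\mu_1)=1$; these correspond to the cylinder cut out by the word $\xi_1\cdots\xi_k\mu_1\cdots\mu_m$ of length $k+m$. Collapsing the words by \eqref{eq:varphis} and \eqref{eq:varphit} gives $\varphi(S_\mu S_\mu^*)=\beta^{-(m-1)}\varphi(S_{\mu_m}S_{\mu_m}^*)$ and $\varphi^t(T_{\bar\xi}T_{\bar\xi}^*)=\beta^{-(k-1)}\varphi^t(T_{\xi_1}T_{\xi_1}^*)$, while $[\varphi(S_iS_i^*)]$ and $[\varphi^t(T_iT_i^*)]$ are the Perron eigenvectors of $A$ and $A^t$ normalized to sum $1$, hence equal to $a_i/(\sum_j a_j)$ and $b_i/(\sum_j b_j)$. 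The only real work is the normalizing constant: using $\varphi(S_j^*S_j)=\beta\varphi(S_jS_j^*)$ and $\sum_j a_jb_j=1$, a direct computation gives $(\varphi^t\otimes\varphi)(E_A)=\sum_j\varphi^t(T_jT_j^*)\varphi(S_j^*S_j)=\beta/((\sum_j a_j)(\sum_j b_j))$. Substituting, the two sum-normalizations cancel and the single factor $\beta^{-1}$ shifts the exponent, so that $\tilde\varphi(T_{\bar\xi}T_{\bar\xi}^*\otimes S_\mu S_\mu^*)=b_{\xi_1}a_{\mu_m}\beta^{-(k+m-1)}$, which is exactly $\mu(U_{[\xi\mu]})$ by \eqref{eq:Parry}. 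I expect this normalization bookkeeping, reconciling the two distinct normalizations of the eigenvectors, to be the main obstacle.

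For (iii), I would first identify the double integral as a conditional expectation. Comparing parts (i) and (iii) of Proposition \ref{prop:2.1}, the fixed-point algebra $(\WRA)^{\gamma^A}$ of the full $\T^2$-action is $C^*(G_A^a)$, and $P(Y):=\iint_{\T^2}\gamma^A_{r,s}(Y)\,dr\,ds$ is the canonical conditional expectation of $\WRA$ onto $C^*(G_A^a)$, since on a generator $T_{\bar\xi}T_{\bar\eta}^*\otimes S_\mu S_\nu^*$ it multiplies by $\iint_{\T^2}\exp(\sqrt{-1}((k-l)r+(m-n)s))\,dr\,ds$, which is $1$ when $k=l,m=n$ and $0$ otherwise. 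As $\varphi$ and $\varphi^t$ are invariant under their modular (gauge) flows, $\varphi^t\otimes\varphi$ is $\gamma^A$-invariant, hence so is $\tilde\varphi$, giving $\tilde\varphi(Y)=\tilde\varphi(P(Y))$. It then remains to show $\tilde\varphi|_{C^*(G_A^a)}=\Tr$. Both are traces on the AF-algebra $C^*(G_A^a)$: $\Tr$ by definition, and $\tilde\varphi|_{C^*(G_A^a)}$ because $C^*(G_A^a)\subset\RA=(\WRA)^{\delta^A}$ and a KMS state restricts to a trace on the fixed-point algebra of its modular flow. A trace on this AF-algebra vanishes on every off-diagonal matrix unit $T_{\bar\xi}T_{\bar\eta}^*\otimes S_\mu S_\nu^*$ with $k=l,m=n$ but $(\xi,\mu)\ne(\eta,\nu)$, and so is determined by its values on the diagonal projections generating $C(\bar X_A)$; since these agree by part (ii), the two traces coincide, yielding $\tilde\varphi(Y)=\Tr(P(Y))$ and hence \eqref{eq:trace}.
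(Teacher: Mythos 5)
Your proposal is correct, and for parts (i) and (ii) it is essentially the paper's own argument: the paper proves the KMS identity by exactly your tensor-factorization computation, carried out on the spanning elements $T_{\bar{\xi}}T_{\bar{\eta}}^*\otimes S_\mu S_\nu^*$ (a dense, invariant $*$-subalgebra of analytic elements), and its part (ii) is the same normalization bookkeeping you describe, merely run in the opposite direction: it starts from the Parry formula \eqref{eq:Parry}, substitutes $\varphi(S_iS_i^*)=a_i/\sum_j a_j$, $\varphi^t(T_iT_i^*)=b_i/\sum_j b_j$ and $(\varphi^t\otimes\varphi)(E_A)=\sum_j\varphi^t(T_jT_j^*)\varphi(S_j^*S_j)$, and collapses words via \eqref{eq:varphis} and \eqref{eq:varphit} to land on $\tilde{\varphi}(T_{\bar{\xi}}T_{\bar{\xi}}^*\otimes S_\nu S_\nu^*)$. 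Where you genuinely diverge is (iii). The paper evaluates both sides of \eqref{eq:trace} directly on each generator $T_{\bar{\xi}}T_{\bar{\eta}}^*\otimes S_\mu S_\nu^*$, implicitly using that the Enomoto--Fujii--Watatani states kill off-diagonal elements ($\varphi(S_\mu S_\nu^*)=0$ unless $\mu=\nu$, and likewise for $\varphi^t$), so that both sides reduce to the Parry measure of a cylinder set or to $0$, and then extends by linearity and density. You instead identify $P(Y)=\iint_{\T^2}\gamma^A_{(r,s)}(Y)\,dr\,ds$ as the conditional expectation onto $(\WRA)^{\gamma^A}=C^*(G_A^a)$, deduce $\tilde{\varphi}=\tilde{\varphi}\circ P$ from flow-invariance of KMS states, note that $\tilde{\varphi}$ restricts to a trace on $C^*(G_A^a)\subset\RA=(\WRA)^{\delta^A}$, and conclude by the fact that a trace on this AF algebra vanishes on off-diagonal matrix units and is therefore determined by its restriction to $C(\bar{X}_A)$, where (ii) gives agreement with $\Tr$. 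Your route is more structural: the off-diagonal vanishing that the paper takes for granted about $\varphi$ and $\varphi^t$ is derived (from invariance plus traciality) rather than cited, at the cost of invoking the AF matrix-unit structure of $C^*(G_A^a)$; the paper's version is shorter and entirely computational. Both are complete proofs.
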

\begin{proof}
(i) 
For
\begin{gather*}
\mu =(\mu_1,\dots,\mu_m), \, \nu =(\nu_1,\dots,\nu_n), \,
\mu' =(\mu'_1,\dots,\mu'_{m'}), \, \nu' =(\nu'_1,\dots,\nu'_{n'})
 \in B_*(\bar{X}_A),\\
\bar{\xi} =(\xi_k,\dots,\xi_1), \, \bar{\eta} =(\eta_l,\dots,\eta_1), \,
\bar{\xi}' =(\xi'_{k'},\dots,\xi'_1), \, \bar{\eta}' =(\eta'_{l'},\dots,\eta'_1)
 \in B_*(\bar{X}_{A^t})
\end{gather*}
with
$A(\xi_k,\mu_1) = A(\eta_l,\nu_1) =
A(\xi'_{k'}, \mu'_1) = A(\eta'_{l'},\nu'_1) =1,
$
put
$$
x =T_{\bar{\xi}}T_{\bar{\eta}}^*\otimes S_\mu S_\nu^*,\qquad
x' =T_{\bar{\xi}'}T_{\bar{\eta}'}^*\otimes S_{\mu'} S_{\nu'}^*
 \in \WRA. 
$$
It then follows that 
\begin{align*}
& (\varphi^t\otimes\varphi)(E_A)\cdot \tilde{\varphi}(x'\delta^A_{i\log\beta}(x)) \\
=&(\varphi^t\otimes\varphi)(
(T_{\bar{\xi}'}T_{\bar{\eta}'}^*\otimes S_{\mu'} S_{\nu'}^*)
(\alpha_{i\log\beta}^{A^t}(T_{\bar{\xi}}T_{\bar{\eta}}^*)
\otimes \alpha_{i\log\beta}^A(S_\mu S_\nu^*))) \\
=&\varphi^t(
T_{\bar{\xi}'}T_{\bar{\eta}'}^*\alpha_{i\log\beta}^{A^t}(T_{\bar{\xi}}T_{\bar{\eta}}^*))
\varphi(
S_{\mu'} S_{\nu'}^*\alpha_{i\log\beta}^A(S_\mu S_\nu^*)) \\
=& \varphi^t(
T_{\bar{\xi}}T_{\bar{\eta}}^*T_{\bar{\xi}'}T_{\bar{\eta}'}^*)
\varphi(S_\mu S_\nu^*S_{\mu'} S_{\nu'}^*) \\
=&(\varphi^t\otimes\varphi)(
(T_{\bar{\xi}}T_{\bar{\eta}}^*\otimes S_\mu S_\nu^*)
(T_{\bar{\xi}'}T_{\bar{\eta}'}^*\otimes S_{\mu'} S_{\nu'}^*))\\
=&(\varphi^t\otimes\varphi)(xx') \\
=
& (\varphi^t\otimes\varphi)(E_A)\cdot \tilde{\varphi}(xx'),
\end{align*}
thus proving that 
$\tilde{\varphi}$ is a KMS state $\WRA$ for the diagonal action $\delta^A$
at the inverse temperature $\log\beta$.

(ii)
Put 
$$
\bar{a}_i = \frac{a_i}{\sum_{i=1}^N a_i}  = \varphi(S_i S_i^*),
\qquad
\bar{b}_i = \frac{b_i}{\sum_{i=1}^N b_i}  = \varphi^t(T_i T_i^*)
$$
so that 
\begin{equation*}
\sum_{i=1}^N\varphi(S_i S_i^*)\varphi^t(T_i T_i^*)
 = \frac{1}{(\sum_{i=1}^N a_i)\cdot(\sum_{i=1}^N b_i)}.
\end{equation*}
It then follows that 
\begin{align*}
\mu(U_{[\xi\nu]_i^{i+k+n-1}})
=& \bar{b}_{\xi_1} \cdot ({\sum_{i=1}^N b_i})\cdot
     \bar{a}_{\nu_n}(\sum_{i=1}^N a_i) \cdot \beta^{-(k+n-1)} \\
=& \varphi^t(T_{\xi_1} T_{\xi_1}^*) \cdot ({\sum_{i=1}^N b_i})\cdot
     \varphi(S_{\nu_n} S_{\nu_n}^*)(\sum_{i=1}^N a_i) \cdot \beta^{-(k+n-1)} \\
=&\frac{1}{\sum_{i=1}^N\varphi^t(T_i T_i^*)\varphi(S_i S_i^*)}\cdot
\varphi^t(T_{\xi_1} T_{\xi_1}^*) \varphi(S_{\nu_n} S_{\nu_n}^*) \cdot \beta^{-(k+n-1)}. 
\end{align*}
By using \eqref{eq:varphis} and \eqref{eq:varphit}
we thus have
\begin{align*}
\mu(U_{[\xi\nu]_i^{i+k+n-1}})
=&\frac{1}{\sum_{i=1}^N\varphi^t(T_i T_i^*)\varphi(S_i S_i^*)}\cdot
\frac{1}{\beta}\cdot
\varphi^t(T_{\xi_k\cdots\xi_1} T_{\xi_k\cdots\xi_1}^*)
 \varphi(S_{\nu_1\cdots\nu_n} S_{\nu_1\cdots\nu_n}^*) \\
=&\frac{1}{\sum_{i=1}^N\varphi^t(T_i T_i^*)\varphi(S_i^* S_i)}\cdot
\varphi^t(T_{\bar{\xi}} T_{\bar{\xi}}^*)
 \varphi(S_{\nu} S_{\nu}^*) \\ 
=&\frac{1}{(\varphi^t\otimes\varphi)(E_A)}\cdot
\varphi^t(T_{\bar{\xi}} T_{\bar{\xi}}^*)
 \varphi(S_{\nu} S_{\nu}^*) \\
=&\tilde{\varphi}(T_{\bar{\xi}} T_{\bar{\xi}}^*\otimes S_{\nu} S_{\nu}^*).
\end{align*}

(iii)
For 
\begin{gather*}
 \mu =(\mu_1,\dots,\mu_m), \quad \nu =(\nu_1,\dots,\nu_n) \in B_*(\bar{X}_A),\\
 \bar{\xi} =(\xi_k,\dots,\xi_1), \quad \bar{\eta} =(\eta_l,\dots,\eta_1) \in B_*(\bar{X}_{A^t}),
\end{gather*}
satisfying
$ 
 A(\xi_k,\mu_1) = A(\eta_l,\nu_1) =1,
$
 it is direct to see the following equalities
\begin{align*}
 \tilde{\varphi}(T_{\bar{\xi}}T_{\bar{\eta}}^*\otimes S_\mu S_\nu^*) 
=&\varphi^t(T_{\bar{\xi}}T_{\bar{\eta}}^* )\varphi(S_\mu S_\nu^*) \\
=&
{\begin{cases}
\varphi^t(T_{\bar{\xi}}T_{\bar{\xi}}^*) \varphi(S_\nu S_\nu^*) 
& \text{ if }
 \bar{\xi}=\bar{\eta}, \mu = \nu, \\
0 & \text{ otherwise } 
\end{cases} }\\
=& 
{\begin{cases}
\mu(U_{[\xi\nu]_i^{i+k+n-1}})
& \text{ if }
 \bar{\xi}=\bar{\eta}, \mu = \nu, \\
0 & \text{ otherwise. } 
\end{cases} }
\end{align*}
Since the above value coincides with
$$
 \Tr\left(
\iint_{\T^2} \gamma_{r,s}^A(T_{\bar{\xi}}T_{\bar{\eta}}^*\otimes S_\mu S_\nu^*)dr ds
\right), 
$$
we know the formula \eqref{eq:trace}.
\end{proof}

\medskip

We finally prove that  a KMS state on $\WRA$ for the diagonal action
$\delta^A$ exists only if  at  the inverse temperature $\log\beta$.
We will further know that the admitted KMS state is unique. 
In order to avoid non essential difficulty, 
we assume that the irreducible matrix $A$ with entries in $\{0,1\}$
is aperiodic so that 
there exists $n_0\in \N$ such that 
$A^{n_0}(i,j)\ge 1$ for all $i,j = 1,\dots,N$.
Let $\psi$ be a KMS state on $\WRA$ for the diagonal action
$\delta^A$ at the inverse temperature 
$\log\gamma$ for $1<\gamma\in {\mathbb{R}}.$
We will prove that $\gamma = \beta$: the Perron--Frobenius eigenvalue of $A$
and $\psi = \tilde{\varphi}.$

For $i,j \in \{1,\dots,N\}$ and
$\mu =(\mu_1,\dots,\mu_m),\, 
 \nu =(\nu_1,\dots,\nu_n)$ such that 
$(i,\mu_1,\dots,\mu_m, j) \in B_{m+2}(\bar{X}_A),\,
  (i,\nu_1,\dots,\nu_n, j) \in B_{n+2}(\bar{X}_A)$,  
we set a partial isometry
\begin{equation}
V_{\nu, \mu}(i,j) = T_i^*T_{\nu_1}^*\cdots T_{\nu_n}^*T_j^*\otimes
S_i S_{\mu_1}\cdots S_{\mu_m} S_j \label{eq:Vnmij}
\end{equation}
Since $T_i^*\otimes S_i, T_j^*\otimes S_j \in \WRA$, we know that 
$V_{\nu,\mu}(i,j)$ belongs to $\WRA$.
We then have the identities
\begin{align*}
V_{\nu, \mu}(i,j)V_{\nu, \mu}(i,j)^*
= & T_i^*T_{\nu_1}^*\cdots T_{\nu_n}^*T_j^*
T_j T_{\nu_n}\cdots T_{\nu_1}T_i
\otimes
S_i S_{\mu_1}\cdots S_{\mu_m} S_j
S_j^*S_{\mu_m}^*\cdots S_{\mu_1}^*S_i^* \\
=& T_i^*T_i
\otimes
S_i S_{\mu_1}\cdots S_{\mu_m} S_j
S_j^*S_{\mu_m}^*\cdots S_{\mu_1}^*S_i^*\\
\intertext{and}
V_{\nu, \mu}(i,j)^*V_{\nu, \mu}(i,j)
= & T_j T_{\nu_n}\cdots T_{\nu_1}T_i T_i^*T_{\nu_1}^*\cdots T_{\nu_n}^*T_j^*
\otimes
S_j^*S_{\mu_m}^*\cdots S_{\mu_1}^*S_i^*
S_i S_{\mu_1}\cdots S_{\mu_m} S_j
 \\
=& T_j T_{\nu_n}\cdots T_{\nu_1}T_i T_i^*T_{\nu_1}^*\cdots T_{\nu_n}^*T_j^*
\otimes
S_j^* S_j
\end{align*}
For $p \in \Z$, denote by 
$\WRA^{\delta^A}(p)$ the $p$th spectral subspace of $\WRA$ for the action $\delta^A$.
\begin{lemma}\label{lem:6.2}
Suppose that $X\in \WRA$ 
belongs to $\WRA^{\delta^A}(p)$ for some $p\ne 0$.
Then we have
$\psi(X) =0$
\end{lemma}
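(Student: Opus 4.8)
The plan is to extract the value $\psi(X)$ directly from the KMS condition \eqref{eq:KMSstate} applied to the single pair $(1_{\WRA}, X)$, exploiting that an element of a nonzero spectral subspace is rescaled by a nontrivial scalar under the analytic continuation $\delta^A_{i\log\gamma}$. The computation is one line; the only real issue is justifying that the KMS condition may be used on this particular pair.

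First I would record the spectral behaviour of $X$. The hypothesis $X \in \WRA^{\delta^A}(p)$ means $\delta^A_t(X) = \exp(\sqrt{-1}\,pt)\,X$ for all $t \in \T$, and since the right-hand side is entire in $t$, the map $z \mapsto \delta^A_z(X) = \exp(\sqrt{-1}\,pz)\,X$ extends to all $z \in \mathbb{C}$; in particular $X$ is an entire analytic element for $\delta^A$. Evaluating at $z = i\log\gamma$ and using $\sqrt{-1}\cdot i = -1$ gives $\delta^A_{i\log\gamma}(X) = \exp(-p\log\gamma)\,X = \gamma^{-p}X$. Likewise $1_{\WRA} = E_A$ is fixed by $\delta^A$ and so is an entire analytic element.

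Next I would invoke the KMS condition for the pair $(X',Y') = (1_{\WRA}, X)$, which reads $\psi(\delta^A_{i\log\gamma}(X)) = \psi(X)$. Substituting the scaling computed above yields $\gamma^{-p}\psi(X) = \psi(X)$, that is $(\gamma^{-p}-1)\psi(X) = 0$. Since $1 < \gamma$ and $p \neq 0$, we have $\gamma^{-p}\neq 1$, and therefore $\psi(X) = 0$, as desired.

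The step requiring care, and the main obstacle, is the legitimacy of applying \eqref{eq:KMSstate} to $(1_{\WRA}, X)$, since the condition is postulated only on a fixed norm-dense $\delta^A$-invariant $*$-subalgebra of analytic elements rather than on all entire analytic elements. I would handle this concretely by taking that dense subalgebra to be the algebraic span of the generators $T_{\bar{\xi}}T_{\bar{\eta}}^*\otimes S_\mu S_\nu^*$, which is $\delta^A$-invariant and consists of entire analytic elements; applying the degree-$p$ Fourier projection of the $\T$-action shows that $X \in \WRA^{\delta^A}(p)$ is a norm limit of finite combinations of such generators with $k-l+m-n=p$, each lying simultaneously in this subalgebra and in the spectral subspace. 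The identity $\psi(\delta^A_{i\log\gamma}(\,\cdot\,)) = \psi(\,\cdot\,)$ then holds for each approximant and passes to the limit by continuity of $\psi$. (Alternatively one may simply cite the standard fact that the KMS condition extends automatically from a core to all entire analytic elements.) Everything else is the elementary scaling argument above.
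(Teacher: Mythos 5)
Your proof is correct, but it takes a genuinely different route from the paper's. The paper never applies the KMS condition to the pair $(1_{\WRA},X)$; instead, assuming without loss of generality $p>0$, it decomposes the unit as
$E_A=\sum_{i,j=1}^N\sum_{\mu\in B_{n_0+p}(\bar{X}_A)}V_{\nu,\mu}(i,j)V_{\nu,\mu}(i,j)^*$,
where the partial isometries $V_{\nu,\mu}(i,j)$ of \eqref{eq:Vnmij} are homogeneous of degree $p$ for $\delta^A$, and then cycles each $V_{\nu,\mu}(i,j)$ through the KMS condition twice: once producing the factor $\gamma^{-p}$ from $\delta^A_{i\log\gamma}(V_{\nu,\mu}(i,j))=\gamma^{-p}V_{\nu,\mu}(i,j)$, and once using that $V_{\nu,\mu}(i,j)^*X$ has degree $0$ and is therefore fixed by $\delta^A_{i\log\gamma}$; this yields $\psi(X)=\psi(E_AX)=\gamma^{-p}\psi(E_AX)=\gamma^{-p}\psi(X)$. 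Your single application of the KMS condition to $(1_{\WRA},X)$ reaches the same equation $(1-\gamma^{-p})\psi(X)=0$ in one step. What your route buys: it needs neither the reduction to $p>0$ nor the aperiodicity of $A$ (the paper's construction of $V_{\nu,\mu}(i,j)$ for every pair $(i,j)$ requires admissible words of the prescribed lengths $n_0$ and $n_0+p$ joining $i$ to $j$, which is exactly where $A^{n_0}(i,j)\ge 1$ enters). What the paper's route buys: the partial isometries $V_{\nu,\mu}(i,j)$ and the decomposition of $E_A$ are not throwaway; they are reused immediately afterwards to define and compute the quantities $p_{n,\nu}(i,j)$ and hence to identify the restriction of $\psi$ to $C(\bar{X}_A)$ with the Parry measure. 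Finally, the domain issue you flag is genuine but affects both arguments equally: the paper also applies \eqref{eq:KMSstate} to pairs such as $(V_{\nu,\mu}(i,j)^*X,\,V_{\nu,\mu}(i,j))$, in which one entry is an arbitrary element of a spectral subspace rather than a member of a prescribed dense subalgebra, so both proofs ultimately rest on the standard fact that the KMS identity extends from a core to all pairs $(Y,Z)$ with $Z$ entire analytic --- a point you address explicitly and the paper leaves implicit.
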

\begin{proof}
We may assume $p>0$. 
For $i,j =1,\dots,N$, 
let
$ 
\mu =(\mu_1,\dots,\mu_{n_0 +p})
$
be  an admissible  word 
such that 
$(i,\mu_1,\dots,\mu_{n_0 +p},j) \in B_{n_0 + p +2}(\bar{X}_A)$.
Take $ 
\nu =(\nu_1,\dots,\nu_{n_0})$ with
$(i,\nu_1,\dots,\nu_{n_0},j) \in B_{n_0}(\bar{X}_A)$
and consider the partial isometry
\begin{equation*}
V_{\nu, \mu}(i,j) = T_i^*T_{\nu_1}^*\cdots T_{\nu_{n_0}}^*T_j^*
\otimes
S_i S_{\mu_1}\cdots S_{\mu_{n_0 +p}} S_j.
\end{equation*}
The partial isometry $V_{\nu, \mu}(i,j) $ belongs to 
$\WRA^{\delta^A}(p)$ and satisfies
\begin{equation*}
V_{\nu, \mu}(i,j)V_{\nu, \mu}(i,j)^*
=  T_i^*T_i
\otimes
S_i S_{\mu_1}\cdots S_{\mu_{n_0 +p}} S_j
S_j^*S_{\mu_{n_0 +p}}^*\cdots S_{\mu_1}^*S_i^*.
\end{equation*}
We then have
\begin{equation*}
E_A = 
\sum_{i,j=1}^N \sum_{\mu \in B_{n_0 +p}(\bar{X}_A)}
V_{\nu, \mu}(i,j)V_{\nu, \mu}(i,j)^*.
\end{equation*}
It then follows that 
\begin{align*}
\psi(X) 
= & \psi(E_A X) \\
= & \sum_{i,j=1}^N \sum_{\mu \in B_{n_0 +p}(\bar{X}_A)}
\psi(V_{\nu, \mu}(i,j)V_{\nu, \mu}(i,j)^* X) \\
= & \sum_{i,j=1}^N \sum_{\mu \in B_{n_0 +p}(\bar{X}_A)}
\psi(V_{\nu, \mu}(i,j)^* X \delta^A_{i\log\gamma}(V_{\nu, \mu}(i,j))) \\
= & \frac{1}{\gamma^p} \sum_{i,j=1}^N \sum_{\mu \in B_{n_0 +p}(\bar{X}_A)}
\psi(V_{\nu, \mu}(i,j)^* X V_{\nu, \mu}(i,j)) \\
= & \frac{1}{\gamma^p} \sum_{i,j=1}^N \sum_{\mu \in B_{n_0 +p}(\bar{X}_A)}
\psi( V_{\nu, \mu}(i,j) \delta^A_{i\log\gamma}(V_{\nu, \mu}(i,j)^* X)) \\
= & \frac{1}{\gamma^p} \sum_{i,j=1}^N \sum_{\mu \in B_{n_0 +p}(\bar{X}_A)}
\psi( V_{\nu, \mu}(i,j) V_{\nu, \mu}(i,j)^* X) \\
=&\frac{1}{\gamma^p}\psi(E_A X).
\end{align*}
Since $\gamma>1$, we have $\psi(X)=0$. 
\end{proof}
Since 
$\RA$ is the fixed point algebra $(\WRA)^{\delta^A}$ of $\WRA$
under $\delta^A$, we may define a conditional expectation 
${\mathcal{E}}_A: \WRA \longrightarrow \RA$
 by
\begin{equation}
\mathcal{E}_A(X) = \int_{\mathbb{T}}\delta^A_t(X) dt, \qquad X \in \WRA.
\end{equation}
The preceding lemma implies the following lemma.
\begin{lemma}\label{lem:6.3}
Let $\psi_0$ be the restriction of $\psi$ to the subalgebra $(\WRA)^{\delta^A}$.
Then $\psi_0$ is a tracial state on $\RA$ such that 
$\psi = \psi_0 \circ {\mathcal{E}}_A.$
\end{lemma}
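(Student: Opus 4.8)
The plan is to derive both assertions directly from Lemma \ref{lem:6.2} and the KMS relation for $\psi$, treating the trace property and the factorization $\psi = \psi_0\circ\mathcal{E}_A$ in turn.

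First I would verify that $\psi_0$ is a tracial state. Since $\delta^A_t(1_{\WRA}) = 1_{\WRA}$, the unit of $\WRA$ lies in $\RA = (\WRA)^{\delta^A}$, so the restriction $\psi_0$ of the state $\psi$ to $\RA$ is again a state. For the trace property, note that every $Y \in \RA$ is fixed by $\delta^A$, hence the map $t\mapsto \delta^A_t(Y)=Y$ is constant; thus $Y$ is an entire analytic element for $\delta^A$ whose analytic continuation satisfies $\delta^A_{i\log\gamma}(Y) = Y$. Substituting this into the KMS condition $\psi(X\delta^A_{i\log\gamma}(Y)) = \psi(YX)$ for $X, Y \in \RA$ gives $\psi_0(XY) = \psi(XY) = \psi(YX) = \psi_0(YX)$, so $\psi_0$ is tracial.

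Next I would establish $\psi = \psi_0\circ\mathcal{E}_A$. The key point is that $\mathcal{E}_A$ is exactly the projection onto the zeroth spectral subspace: for $X \in \WRA^{\delta^A}(p)$ one has $\delta^A_t(X) = \exp(\sqrt{-1}pt)X$, so $\mathcal{E}_A(X) = \int_{\T}\delta^A_t(X)\,dt$ equals $X$ when $p=0$ and vanishes otherwise. By Proposition \ref{prop:2.1}\,(iii) the linear span of the generators $T_{\bar{\xi}}T_{\bar{\eta}}^*\otimes S_\mu S_\nu^*$ is dense in $\WRA$, and each such generator lies in the single spectral subspace $\WRA^{\delta^A}(p)$ with $p = k - l + m - n$. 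If $p\neq 0$, then $\psi$ vanishes on it by Lemma \ref{lem:6.2}, while $\mathcal{E}_A$ annihilates it, so both sides are zero; if $p = 0$, the element already belongs to $\RA$, where $\mathcal{E}_A$ acts as the identity and $\psi_0$ agrees with $\psi$. Hence the two functionals coincide on a dense subspace, and since $\mathcal{E}_A$ is norm-contractive while $\psi$ and $\psi_0$ are bounded, both sides are norm-continuous and the identity extends to all of $\WRA$.

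The algebraic content—vanishing of $\psi$ on the nonzero spectral subspaces and the triviality of the analytic continuation of $\delta^A$ on fixed points—is immediate from Lemma \ref{lem:6.2} and the KMS relation, so I do not expect any genuine obstacle. The only step requiring a little care will be the passage from the dense span of generators to all of $\WRA$, which rests on the norm-continuity of $\mathcal{E}_A$ and of the states involved.
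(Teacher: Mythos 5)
Your proof is correct and follows exactly the route the paper intends: the paper states this lemma with no written proof beyond the remark that it follows from Lemma \ref{lem:6.2}, and your argument---vanishing of $\psi$ on the nonzero spectral subspaces, the identification of $\mathcal{E}_A$ as the projection onto the zeroth spectral subspace, density of the span of the generators $T_{\bar{\xi}}T_{\bar{\eta}}^*\otimes S_\mu S_\nu^*$, and traciality from the KMS condition via $\delta^A_{i\log\gamma}(Y)=Y$ for fixed points---is precisely the intended derivation, carried out with appropriate care about continuity.
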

Hence the value of KMS state is determined on the subalgebra $\RA$. 
Recall that  $U_A$ denotes the unitary
 $U_A = \sum_{i=1}^N T_i^* \otimes S_i$ which belongs to $\RA$.
\begin{lemma}\label{lem:6.4}
$\psi(U_AXU_A^*)=\psi(X)$ for all $X \in \WRA$.
\end{lemma}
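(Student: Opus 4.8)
The plan is to exploit two structural facts already established: that $U_A$ lies in the fixed point algebra $\RA = (\WRA)^{\delta^A}$, and that $\psi = \psi_0 \circ \mathcal{E}_A$ with $\psi_0$ a tracial state on $\RA$ (Lemma \ref{lem:6.3}).

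First I would verify that $U_A$ is invariant under the diagonal action. Since $\delta^A_t = \alpha^{A^t}_t \otimes \alpha^A_t$ scales each $T_i^*$ by $\exp(-\sqrt{-1}t)$ and each $S_i$ by $\exp(\sqrt{-1}t)$, these factors cancel in every summand of $U_A = \sum_{i=1}^N T_i^* \otimes S_i$, giving $\delta^A_t(U_A) = U_A$; hence $U_A \in \RA$. This is the one point that really matters, and it is where the special form of $U_A$ enters.

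The remaining steps are formal. Because $U_A$ is $\delta^A$-fixed and each $\delta^A_t$ is an automorphism, conjugation by $U_A$ commutes with $\delta^A_t$, so it passes through the averaging that defines $\mathcal{E}_A$:
\begin{equation*}
\mathcal{E}_A(U_A X U_A^*) = \int_{\T} U_A \,\delta^A_t(X)\, U_A^*\, dt = U_A \,\mathcal{E}_A(X)\, U_A^*.
\end{equation*}
Applying $\psi = \psi_0 \circ \mathcal{E}_A$, then the traciality of $\psi_0$ on $\RA$ (note $\mathcal{E}_A(X) \in \RA$), together with the unitarity $U_A^* U_A = 1$, I would conclude
\begin{equation*}
\psi(U_A X U_A^*) = \psi_0\bigl(U_A \,\mathcal{E}_A(X)\, U_A^*\bigr) = \psi_0\bigl(U_A^* U_A \,\mathcal{E}_A(X)\bigr) = \psi_0(\mathcal{E}_A(X)) = \psi(X).
\end{equation*}

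There is no genuine obstacle here: the whole argument hinges on recognizing $U_A$ as a $\delta^A$-fixed unitary, after which the conditional expectation transports conjugation by $U_A$ to itself and the trace property of $\psi_0$ finishes the job.
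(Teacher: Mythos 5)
Your proof is correct, but it follows a genuinely different route from the paper's. The paper proves Lemma \ref{lem:6.4} in one line by applying the KMS condition directly: since $U_A$ is fixed by $\delta^A$, its analytic continuation is trivial, and
\begin{equation*}
\psi(X) = \psi(U_A^* \cdot U_A X) = \psi\bigl(U_A X\, \delta^A_{i\log\gamma}(U_A^*)\bigr) = \psi(U_A X U_A^*).
\end{equation*}
You instead route everything through Lemma \ref{lem:6.3}: the decomposition $\psi = \psi_0\circ{\mathcal{E}}_A$ with $\psi_0$ tracial on $\RA$, combined with the (correct) observation that conjugation by a $\delta^A$-fixed unitary commutes with the averaging defining ${\mathcal{E}}_A$, so that ${\mathcal{E}}_A(U_A X U_A^*) = U_A{\mathcal{E}}_A(X)U_A^*$, after which traciality of $\psi_0$ finishes the argument. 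Both proofs turn on the identical key fact, which you verify: $\delta^A_t(U_A)=U_A$ because the phases of $\alpha^{A^t}_t$ and $\alpha^A_t$ cancel in each summand $T_i^*\otimes S_i$. Your route is not logically independent of the paper's, since Lemma \ref{lem:6.3} is itself extracted from the KMS condition; but it buys a cleaner final step: once Lemma \ref{lem:6.3} is in hand, the conclusion is purely formal, holds verbatim for any $\delta^A$-fixed unitary in place of $U_A$, and requires no further contact with analytic elements or the continuation $\delta^A_{i\log\gamma}$, whereas the paper's one-liner implicitly extends the KMS identity from the dense analytic subalgebra to arbitrary $X\in\WRA$ by continuity.
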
 
\begin{proof}
Since $U_A$ is fixed under the action $\delta^A$, we have 
\begin{equation*} 
\psi(X) = \psi(U_A^* U_A X) 
          = \psi(U_A X\delta^A_{i\log\gamma}(U_A^*)) 
          = \psi(U_A X U_A^*).
\end{equation*}
\end{proof}
As in \cite[Proposition 9.9]{MaPre2017}, the automorphism
$\Ad(U_A)$ behaves like the  shift on $\WRA$.
Lemma \ref{lem:6.4} tells us that the KMS state is invariant nuder the shift.
The following lemma is crucial in our discussions. 
\begin{lemma}\label{lem:6.5}
Let  
$X =T_{\bar{\xi}}T_{\bar{\eta}}^*\otimes S_\mu S_\nu^* \in \RA 
$
where
\begin{gather*}
 \mu =(\mu_1,\dots,\mu_m), \nu =(\nu_1,\dots,\nu_n) \in B_*(\bar{X}_A),\\
 \bar{\xi} =(\xi_k,\dots,\xi_1), \bar{\eta} =(\eta_l,\dots,\eta_1) \in B_*(\bar{X}_{A^t}).
\end{gather*}
Suppose that $\psi(X) \ne 0$.
Then  we have
$k=l, m=n$ and $\mu = \nu, \bar{\xi} = \bar{\eta}$.
\end{lemma}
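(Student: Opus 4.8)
The plan is to prove the contrapositive in two independent steps: first that $\psi(X)\neq 0$ forces $k=l$ (equivalently, since $X\in\RA$ already gives $k-l=n-m$, that $m=n$), and then that, once the lengths agree, $\psi(X)\neq 0$ forces $\bar\xi=\bar\eta$ and $\mu=\nu$. Throughout I use that $X\in\RA=(\WRA)^{\delta^A}$, so that $k-l=n-m$ as recorded earlier, and that $\psi_0=\psi|_{\RA}$ is a tracial state with $\psi=\psi_0\circ\mathcal{E}_A$ by Lemma \ref{lem:6.3}.

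First I would dispatch the second (easier) step. Assume $k=l$ and $m=n$, so that $X$ lies in the AF-subalgebra $C^*(G_A^a)$ of Proposition \ref{prop:2.1}(i). Let $e=T_{\bar\xi}T_{\bar\xi}^*\otimes S_\mu S_\mu^*\in C(\bar{X}_A)\subset\RA$ be the source projection of $X$, so that $eX=X$. By traciality, $\psi(X)=\psi(eX)=\psi(Xe)$, and $Xe=T_{\bar\xi}(T_{\bar\eta}^*T_{\bar\xi})T_{\bar\xi}^*\otimes S_\mu(S_\nu^*S_\mu)S_\mu^*$. Since the words now have equal lengths, the Cuntz--Krieger relations force $T_{\bar\eta}^*T_{\bar\xi}=0$ unless $\bar\xi=\bar\eta$ and $S_\nu^*S_\mu=0$ unless $\mu=\nu$, so $Xe=0$ outside the diagonal case. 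Hence $\psi(X)\neq 0$ yields $\bar\xi=\bar\eta$ and $\mu=\nu$.

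The heart of the matter is the first step. The idea is that although $X$ has $\delta^A$-degree $0$, it carries the nonzero partial degree $k-l$ under the first circle action $\alpha^{A^t}$ (and the opposite degree $m-n=-(k-l)$ under $\alpha^A$), and the KMS condition for $\delta^A$ should convert this into a genuine scaling. Concretely, I would like to factor $X=ab$ with $a$ supported on the $\OTA$-leg and $b$ on the $\OA$-leg; such $a,b$ commute, so $ab=ba=X$, while $a$ is $\delta^A$-homogeneous of degree $k-l$ and $b$ of degree $-(k-l)$. The KMS identity then reads $\psi(a\,\delta^A_{i\log\gamma}(b))=\psi(ba)$, that is $\gamma^{\,k-l}\psi(X)=\psi(X)$, whence $\psi(X)=0$ as soon as $k\neq l$ (because $\gamma>1$ makes $\gamma^{\,k-l}\neq 1$). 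Thus $\psi(X)\neq 0$ forces $k=l$ and therefore $m=n$, reducing matters to the second step.

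The hard part is that the naive commuting factors $a=T_{\bar\xi}T_{\bar\eta}^*\otimes 1$ and $b=1\otimes S_\mu S_\nu^*$ are not elements of $\WRA=E_A(\OTA\otimes\OA)E_A$, since $E_A$ couples the two tensor legs; so the separation of the $\alpha^{A^t}$- and $\alpha^A$-gradings must be carried out inside the corner. I expect this to be the delicate point, and I would resolve it in one of two ways. Either one exhibits a resolution of the unit $E_A=\sum_\alpha c_\alpha c_\alpha^*$ by $\delta^A$-homogeneous elements $c_\alpha\in\WRA$ of a fixed nonzero degree $d$ together with the algebraic identity $\sum_\alpha c_\alpha^*Xc_\alpha=X$, which gives $\psi(X)=\gamma^{-d}\psi(X)$ exactly as in the proof of Lemma \ref{lem:6.2}; or one passes to the stabilization $\WRA\otimes\K\cong\OTA\otimes\OA\otimes\K$, where $\alpha^{A^t}$ and $\alpha^A$ act independently and the commuting factorization is literally available, and then transports the vanishing back to $\psi$. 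In either route the only thing to be checked is the independence of the two gradings inside $\WRA$; once that is in hand, the scaling $\gamma^{\,k-l}\psi(X)=\psi(X)$ and hence the full statement follow at once.
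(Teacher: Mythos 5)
Your ``step 2'' is correct: once $k=l$ and $m=n$, multiplying by the diagonal projection $e=T_{\bar{\xi}}T_{\bar{\xi}}^*\otimes S_\mu S_\mu^*\in C(\bar{X}_A)$ and invoking traciality of $\psi|_{\RA}$ (Lemma \ref{lem:6.3}) does force $\bar{\xi}=\bar{\eta}$ and $\mu=\nu$; that part is a clean alternative to the way the paper extracts the same conclusion. The genuine gap is your ``step 1,'' which is the heart of the lemma: you have correctly located the difficulty (the commuting factors $T_{\bar{\xi}}T_{\bar{\eta}}^*\otimes 1$ and $1\otimes S_\mu S_\nu^*$ do not lie in the corner $\WRA=E_A(\OTA\otimes\OA)E_A$) but neither of your two routes closes it. For route (a), you need \emph{both} $\sum_\alpha c_\alpha c_\alpha^*=E_A$ and $\sum_\alpha c_\alpha^* X c_\alpha = X$ with all $c_\alpha$ homogeneous of one fixed degree $d\neq 0$. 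The second condition is indispensable: with only the first, the two applications of the KMS condition cancel (since $c_\alpha^*X$ has degree $-d$, one gets $\psi(X)=\gamma^{-d}\sum_\alpha\psi(c_\alpha^*Xc_\alpha)=\gamma^{-d}\cdot\gamma^{d}\sum_\alpha\psi(c_\alpha c_\alpha^*X)=\psi(X)$, a tautology, in contrast to Lemma \ref{lem:6.2} where $c_\alpha^*X$ has degree zero). Moreover such a family cannot exist generically --- it would force $\psi(X)=0$ for $X\in C(\bar{X}_A)$ as well --- so its construction must exploit the nonzero bidegree of $X$, and you give no construction. The obstruction is concrete: membership in $\WRA$ is governed by the admissibility conditions $A(\xi_k,\mu_1)=A(\eta_l,\nu_1)=1$ of Proposition \ref{prop:2.1}, which couple the two tensor legs. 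Even the natural corrected factor $a'=T_{\bar{\xi}}T_{\bar{\eta}}^*\otimes S_{\mu_1}S_{\nu_1}^*$, which \emph{does} lie in $\WRA$ and has degree $k-l$, admits no complement $b'\in\WRA$ with $a'b'=b'a'=X$: that would require the word $(\nu_1,\mu_2,\dots,\mu_m)$ to be admissible, i.e.\ $A(\nu_1,\mu_2)=1$, which is not implied by the hypotheses (indeed $a'a'^*X=0$ whenever $A(\nu_1,\mu_2)=0$). Route (b) is equally unfinished: the isomorphism $\WRA\otimes\K\cong\OTA\otimes\OA\otimes\K$ of Proposition \ref{prop:KunitEA} comes from Brown's theorem and is not equivariant; to transport the KMS condition you would need an isometry $v$ in the multiplier algebra \emph{fixed by the circle action} with $vv^*=E_A\otimes 1$, together with the machinery of KMS weights on a nonunital algebra, and neither is established --- the existence of such a $v$ is essentially the same ``decoupling'' problem in another guise.

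It is instructive to compare with what the paper actually does, because it shows the two steps you tried to separate cannot be performed in that order. The paper's proof interleaves them: using the KMS condition against the diagonal projection $T_{\eta_l}T_{\eta_l}^*\otimes S_{\nu_1}S_{\nu_1}^*$ and invariance of $\psi$ under $\Ad(U_A)$ (Lemma \ref{lem:6.4}), it first matches letters one at a time, obtaining $\xi_k=\eta_l$, $\mu_1=\nu_1$, then shifts and repeats until $\eta_j=\xi_{k-l+j}$ for all $j$. Only \emph{after} this matching does the element $W=T_{\xi_{k-l}}\cdots T_{\xi_1}T_{\eta_1}^*\otimes S_{\eta_1}$, of $\delta^A$-degree $k-l$, belong to $\WRA$: the required admissibility $A(\xi_{k-l},\eta_1)=1$ holds precisely because $\eta_1=\xi_{k-l+1}$. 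The KMS condition applied to $W$ then yields $\psi(X)=\gamma^{-(k-l)}\psi(X)$, hence $k=l$. In other words, the letter matching is not an easier second step done after the scaling; it is the mechanism that \emph{creates}, inside $\WRA$, the homogeneous element of nonzero degree whose existence your step 1 simply presupposes. As written, your proposal identifies the right scaling mechanism but leaves its key prerequisite --- the ``independence of the two gradings inside $\WRA$,'' which in the naive sense is actually false --- unproved.
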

\begin{proof}
Since
$X$ belongs to $\RA$, we have 
 $ A(\xi_k,\mu_1) = A(\eta_l,\nu_1) =1$ and $k-l = n-m$.
We may assume that $k\ge l$ and hence $n\ge m$.
It then follows that 
\begin{align*}
\psi(X)
=& \psi(T_{\xi_k}\cdots T_{\xi_1}T_{\eta_1}^*\cdots T_{\eta_l}^*
\otimes S_{\mu_1}\cdots S_{\mu_m} S_{\nu_n}^*\cdots S_{\nu_1}^* )\\
=& \psi((T_{\xi_k}\cdots T_{\xi_1}T_{\eta_1}^*\cdots T_{\eta_l}^*
\otimes S_{\mu_1}\cdots S_{\mu_m} S_{\nu_n}^*\cdots S_{\nu_1}^*)
\cdot (T_{\eta_l}T_{\eta_l}^*\otimes S_{\nu_1}S_{\nu_1}^*)
 )\\
=& \psi((T_{\eta_l}T_{\eta_l}^*\otimes S_{\nu_1}S_{\nu_1}^*)\cdot
\delta^A_{i\log\gamma}(T_{\xi_k}\cdots T_{\xi_1}T_{\eta_1}^*\cdots T_{\eta_l}^*
\otimes S_{\mu_1}\cdots S_{\mu_m} S_{\nu_n}^*\cdots S_{\nu_1}^*)
 )\\
=& \frac{1}{\gamma^{k+m-l-n}}
\psi((T_{\eta_l}T_{\eta_l}^*\otimes S_{\nu_1}S_{\nu_1}^*)\cdot
(T_{\xi_k}\cdots T_{\xi_1}T_{\eta_1}^*\cdots T_{\eta_l}^*
\otimes S_{\mu_1}\cdots S_{\mu_m} S_{\nu_n}^*\cdots S_{\nu_1}^*)
 )\\
=& 
\psi(T_{\eta_l}T_{\eta_l}^*\cdot
T_{\xi_k}\cdots T_{\xi_1}T_{\eta_1}^*\cdots T_{\eta_l}^*
\otimes 
S_{\nu_1}S_{\nu_1}^*\cdot S_{\mu_1}\cdots S_{\mu_m} S_{\nu_n}^*\cdots S_{\nu_1}^*).
 \end{align*}
By the assumption $\psi(X) \ne 0$, 
we get $\eta_l = \xi_k$ and $\nu_1 = \mu_1$,
and we have 
\begin{equation*}
\psi(X) =
\psi(T_{\eta_l}
T_{\xi_{k-1}}\cdots T_{\xi_1}T_{\eta_1}^*\cdots T_{\eta_l}^*
\otimes 
 S_{\mu_1}\cdots S_{\mu_m} S_{\nu_n}^*\cdots S_{\nu_1}^*).
 \end{equation*}
As
\begin{align*}
& U_A\cdot T_{\eta_l}
T_{\xi_{k-1}}\cdots T_{\xi_1}T_{\eta_1}^*\cdots T_{\eta_l}^*
\otimes 
 S_{\mu_1}\cdots S_{\mu_m} S_{\nu_n}^*\cdots S_{\nu_1}^*\cdot U_A^*\\
=
& 
T_{\xi_{k-1}}\cdots T_{\xi_1}T_{\eta_1}^*\cdots T_{\eta_{l-1}}^*
\otimes 
  S_{\eta_l}S_{\mu_1}\cdots S_{\mu_m} S_{\nu_n}^*\cdots S_{\nu_1}^*S_{\eta_l}^*,
\end{align*}
Lemma \ref{lem:6.4} shows us the equality
\begin{equation}
\psi(X) =
\psi(
T_{\xi_{k-1}}\cdots T_{\xi_1}T_{\eta_1}^*\cdots T_{\eta_{l-1}}^*
\otimes 
  S_{\eta_l}S_{\mu_1}\cdots S_{\mu_m} S_{\nu_n}^*\cdots S_{\nu_1}^*S_{\eta_l}^*).
\label{eq:6.5.2}
 \end{equation}
We apply the same argument above to the right hand side of \eqref{eq:6.5.2},
and continue these procedures
so that we finally get  
$$
\eta_{l-1} = \xi_{k-1},\, \eta_{l-2} = \xi_{k-2},\, \dots,\,  \eta_{1} = \xi_{k-l+1}
$$
and the identity
\begin{equation*}
\psi(X) =
\psi(
T_{\xi_{k-l}}\cdots T_{\xi_1}T_{\eta_1}^*T_{\eta_1}
\otimes 
S_{\eta_1} S_{\eta_2}\cdots 
  S_{\eta_l}S_{\mu_1}\cdots S_{\mu_m} S_{\nu_n}^*\cdots S_{\nu_1}^*S_{\eta_l}^*
\cdots S_{\eta_2}^*S_{\eta_1}^*).
 \end{equation*}
As $\xi_{k-l+1} =\eta_1$, we see that 
$A(\xi_{k-l},\eta_1) =1$ and hence
$T_{\xi_{k-l}}\cdots T_{\xi_1}T_{\eta_1}^*\otimes 
S_{\eta_1}$
 belongs to the algebra $\WRA$
such that 
\begin{equation*}
\delta^A_{i\log{\gamma}}(
T_{\xi_{k-l}}\cdots T_{\xi_1}T_{\eta_1}^*\otimes S_{\eta_1})
 =
\frac{1}{\gamma^{k-l}}
T_{\xi_{k-l}}\cdots T_{\xi_1}T_{\eta_1}^*\otimes S_{\eta_1}.
\end{equation*}
Hence we have
\begin{align*}
\psi(X) 
=&
\psi(
(T_{\xi_{k-l}}\cdots T_{\xi_1}T_{\eta_1}^* \otimes S_{\eta_1})\cdot (T_{\eta_1}
\otimes 
 S_{\eta_2}\cdots 
  S_{\eta_l}S_{\mu_1}\cdots S_{\mu_m} S_{\nu_n}^*\cdots S_{\nu_1}^*S_{\eta_l}^*
\cdots S_{\eta_2}^*S_{\eta_1}^*)) \\
=&
\psi(
 (T_{\eta_1}\otimes 
S_{\eta_2}\cdots S_{\eta_l}S_{\mu_1}\cdots S_{\mu_m} 
S_{\nu_n}^*\cdots S_{\nu_1}^*S_{\eta_l}^*\cdots S_{\eta_2}^*S_{\eta_1}^*)
\cdot \delta^A_{i\log\gamma}(T_{\xi_{k-l}}\cdots T_{\xi_1}T_{\eta_1}^* \otimes S_{\eta_1})
) \\
=&
\frac{1}{\gamma^{k-l}}
\psi(T_{\eta_1}T_{\xi_{k-l}}\cdots T_{\xi_1}T_{\eta_1}^*
\otimes 
S_{\eta_2}\cdots S_{\eta_l}S_{\mu_1}\cdots S_{\mu_m} 
S_{\nu_n}^*\cdots S_{\nu_1}^*S_{\eta_l}^*\cdots S_{\eta_2}^*S_{\eta_1}^*S_{\eta_1}). 
 \end{align*}
Since 
$S_{\eta_2}^*S_{\eta_1}^*S_{\eta_1} = S_{\eta_2}^*$, we have  
\begin{align*}
\psi(X) 
=&
\frac{1}{\gamma^{k-l}}
\psi(T_{\eta_1}T_{\xi_{k-l}}\cdots T_{\xi_1}T_{\eta_1}^*
\otimes 
S_{\eta_2}\cdots S_{\eta_l}S_{\mu_1}\cdots S_{\mu_m} 
S_{\nu_n}^*\cdots S_{\nu_1}^*S_{\eta_l}^*\cdots S_{\eta_2}^*) \\
=&
\frac{1}{\gamma^{k-l}}
\psi({U_A^*}^{l-1}
 (T_{\eta_1}T_{\xi_{k-l}}\cdots T_{\xi_1}T_{\eta_1}^*
\otimes 
S_{\eta_2}\cdots S_{\eta_l}S_{\mu_1}\cdots S_{\mu_m} 
S_{\nu_n}^*\cdots S_{\nu_1}^*S_{\eta_l}^*\cdots S_{\eta_2}^* U_A^{l-1}) \\
=&
\frac{1}{\gamma^{k-l}}
\psi(
 (T_{\eta_l}\cdots T_{\eta_2}T_{\eta_1}T_{\xi_{k-l}}\cdots T_{\xi_1}T_{\eta_1}^*
T_{\eta_2}^*\cdots T_{\eta_l}^* \otimes 
S_{\mu_1}\cdots S_{\mu_m} S_{\nu_n}^*\cdots S_{\nu_1}^*).  
 \end{align*}
Since $(\eta_l,\dots,\eta_1)= (\xi_k,\dots,\xi_{k-l+1})$, 
we finally obtain that 
\begin{equation*}
\psi(X) 
=
\frac{1}{\gamma^{k-l}}
\psi(X)
\end{equation*}
so that 
$k=l$ and hence $\eta = \xi$.
We similarly see that 
$\mu = \nu$.
\end{proof}
Since any element $X$ of $\RA$ is approximated 
by finite linear combinations of elements of the form
$T_{\bar{\xi}}T_{\bar{\eta}}^*\otimes S_\mu S_\nu^* \in \RA, 
$
we have the following proposition by using Lemma \ref{lem:6.3}.
\begin{proposition}
If an element $X \in \WRA$ satisfies
$\psi(X) \ne 0$,
then $X$ belongs to $C(\bar{X}_A)$.
\end{proposition}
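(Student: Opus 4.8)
The plan is to reduce the proposition to the already-proven Lemma \ref{lem:6.5}, by first passing to the fixed point algebra $\RA = (\WRA)^{\delta^A}$ and then exploiting the rigidity that lemma provides. Throughout I would work with the dense family of spanning elements $T_{\bar\xi}T_{\bar\eta}^*\otimes S_\mu S_\nu^*$ of $\WRA$ described in Proposition \ref{prop:2.1}(iii). First I would observe that such a spanning element $X = T_{\bar\xi}T_{\bar\eta}^*\otimes S_\mu S_\nu^*$ satisfies $\delta^A_t(X) = \exp(\sqrt{-1}(k-l+m-n)t)X$, so that $X$ lies in the spectral subspace $\WRA^{\delta^A}(k-l+m-n)$. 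By Lemma \ref{lem:6.2}, $\psi$ vanishes on every spectral subspace $\WRA^{\delta^A}(p)$ with $p\ne 0$; equivalently, by Lemma \ref{lem:6.3} one has $\psi(X) = \psi_0(\mathcal{E}_A(X))$ with $\mathcal{E}_A(X)=0$ unless $k-l+m-n=0$. Hence the hypothesis $\psi(X)\ne 0$ forces $k-l+m-n=0$, which is exactly the condition $k-l = n-m$ placing $X$ in $\RA$.

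Next, with $X\in\RA$ and $\psi(X)\ne 0$ in hand, I would apply Lemma \ref{lem:6.5} verbatim: it yields $k=l$, $m=n$, $\bar\xi=\bar\eta$ and $\mu=\nu$, so that $X = T_{\bar\xi}T_{\bar\xi}^*\otimes S_\mu S_\mu^*$. Under the identification fixed at the end of Section 2, elements of precisely this diagonal form generate the commutative subalgebra $C(\bar{X}_A)$, and therefore $X\in C(\bar{X}_A)$, which is the assertion for spanning elements.

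For a general $X\in\WRA$ I would approximate it in norm by finite linear combinations of spanning elements and invoke the linearity and continuity of $\psi$ together with Lemma \ref{lem:6.3}: each spanning element on which $\psi$ does not vanish is diagonal and hence lies in $C(\bar{X}_A)$, so $\psi$ annihilates the complementary off-diagonal part and factors through the canonical conditional expectation of $\WRA$ onto $C(\bar{X}_A)$. I do not expect a genuine obstacle in this argument, since the entire analytic content — the iterated shift-and-KMS manipulation driven by Lemma \ref{lem:6.4} — has already been discharged in Lemma \ref{lem:6.5}; what remains is the spectral bookkeeping of the first step and the recognition that the surviving spanning elements are exactly the diagonal generators of $C(\bar{X}_A)$. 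The one point that warrants care is reading the statement at the level of spanning elements: an arbitrary $X$ with $\psi(X)\ne 0$ need not itself lie in $C(\bar{X}_A)$, but $\psi$ is supported on $C(\bar{X}_A)$ in the sense that it kills every off-diagonal summand, and it is this support statement that the subsequent uniqueness argument will use.
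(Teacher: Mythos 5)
Your proof is correct and follows essentially the same route as the paper: the paper likewise reduces to the fixed point algebra via Lemma \ref{lem:6.3} (equivalently the spectral-subspace vanishing of Lemma \ref{lem:6.2}), invokes Lemma \ref{lem:6.5} on the spanning elements $T_{\bar{\xi}}T_{\bar{\eta}}^*\otimes S_\mu S_\nu^*$, and concludes by density of their linear combinations. Your closing caveat is also exactly right and matches the paper's intended reading: the assertion is literally a statement about spanning elements (equivalently, that $\psi$ kills every off-diagonal summand and is thus supported on $C(\bar{X}_A)$), which is precisely the form in which the subsequent uniqueness argument uses it.
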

We will next show that the restriction of the KMS state $\psi$
to the commutative subalgebra $C(\bar{X}_A)$
coincides with the state defined by the Parry measure on $\bar{X}_A$.

Recall that the partial isometry
$V_{\nu,\mu}(i,j)$ for
 $i,j \in \{1,\dots,N\}$ and
$\mu =(\mu_1,\dots,\mu_m),\, 
 \nu =(\nu_1,\dots,\nu_n)$ such that 
$(i,\mu_1,\dots,\mu_m, j) \in B_{m+2}(\bar{X}_A),\,
  (i,\nu_1,\dots,\nu_n, j) \in B_{n+2}(\bar{X}_A)$
is defined by 
\eqref{eq:Vnmij}.
We set 
\begin{equation*}
p_{m,\mu}(i,j) =\psi(T_i T_i^*
\otimes
S_{\mu_1}\cdots S_{\mu_m} S_j
S_j^*S_{\mu_m}^*\cdots S_{\mu_1}^*).
\end{equation*}
The following lemma holds.
\begin{lemma}\label{lem 6.7}
\hspace{6cm}
\begin{enumerate}
\renewcommand{\theenumi}{\roman{enumi}}
\renewcommand{\labelenumi}{\textup{(\theenumi)}}
\item
$\psi(V_{\nu, \mu}(i,j)V_{\nu, \mu}(i,j)^*) = p_{m,\mu}(i,j).$
\item
$\psi(V_{\nu, \mu}(i,j)^*V_{\nu, \mu}(i,j)) = p_{n,\nu}(i,j).$
\item
$\psi(V_{\nu, \mu}(i,j)V_{\nu, \mu}(i,j)^*)
=\gamma^{n-m}\psi(V_{\nu, \mu}(i,j)^*V_{\nu, \mu}(i,j)).$
\end{enumerate}
\end{lemma}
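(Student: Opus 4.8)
The plan is to derive all three parts from the two structural facts already established: the shift-invariance $\psi(U_A X U_A^\ast)=\psi(X)$ of Lemma \ref{lem:6.4} and the KMS condition for $\delta^A$ at inverse temperature $\log\gamma$. First I would record the local rule governing $\Ad(U_A)$ on monomials. Writing $U_A=\sum_{k=1}^N T_k^\ast\otimes S_k$ and using the orthogonality relation $T_k^\ast T_a=\delta_{k,a}T_a^\ast T_a$, a direct computation gives
\begin{equation*}
\Ad(U_A)(T_a X T_a^\ast\otimes Y)=T_a^\ast T_a\,X\,T_a^\ast T_a\otimes S_a Y S_a^\ast
\end{equation*}
for $X\in\OTA$, $Y\in\OA$. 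For part (i) I would apply this with $a=i$, $X=1$ and $Y=P$, where $P=S_{\mu_1}\cdots S_{\mu_m}S_j S_j^\ast S_{\mu_m}^\ast\cdots S_{\mu_1}^\ast$, so that the right-hand side is exactly $T_i^\ast T_i\otimes S_i P S_i^\ast=V_{\nu,\mu}(i,j)V_{\nu,\mu}(i,j)^\ast$. Since $T_i T_i^\ast\otimes P$ lies in $\WRA$ (admissibility of $(i,\mu_1,\dots,\mu_m,j)$ guarantees this), Lemma \ref{lem:6.4} yields $\psi(V_{\nu,\mu}(i,j)V_{\nu,\mu}(i,j)^\ast)=\psi(T_i T_i^\ast\otimes P)=p_{m,\mu}(i,j)$.

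For part (ii), the idea is to transport $V_{\nu,\mu}(i,j)^\ast V_{\nu,\mu}(i,j)=Q\otimes S_j^\ast S_j$, with $Q=T_j T_{\nu_n}\cdots T_{\nu_1}T_i T_i^\ast T_{\nu_1}^\ast\cdots T_{\nu_n}^\ast T_j^\ast$, back to the normal form $T_i T_i^\ast\otimes R$ by applying the displayed rule $n+1$ times. The first application (with $a=j$, $X=T_{\nu_n}\cdots T_{\nu_1}T_i T_i^\ast T_{\nu_1}^\ast\cdots T_{\nu_n}^\ast$) peels off the outermost pair $T_j\,\cdot\,T_j^\ast$ and converts the second tensor factor $S_j^\ast S_j$ into $S_j S_j^\ast$; each subsequent application peels off the next letter $\nu_n,\nu_{n-1},\dots,\nu_1$ and wraps a conjugation $S_{\nu_k}(\,\cdot\,)S_{\nu_k}^\ast$ around the second factor. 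At every stage the support identity $T_a^\ast T_a X T_a^\ast T_a=X$ holds, precisely because the relevant transitions $A(\nu_k,\nu_{k+1})$, $A(\nu_n,j)$ and $A(i,\nu_1)$ all equal $1$ by admissibility of $(i,\nu_1,\dots,\nu_n,j)$. After the $(n+1)$-st step the first factor has been reduced to $T_i T_i^\ast$ and the second to $R=S_{\nu_1}\cdots S_{\nu_n}S_j S_j^\ast S_{\nu_n}^\ast\cdots S_{\nu_1}^\ast$, so repeated use of Lemma \ref{lem:6.4} gives $\psi(V_{\nu,\mu}(i,j)^\ast V_{\nu,\mu}(i,j))=\psi(T_i T_i^\ast\otimes R)=p_{n,\nu}(i,j)$.

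Part (iii) I expect to be immediate from the KMS condition. Counting the factors $T$ and $S$ shows that $V_{\nu,\mu}(i,j)$ is homogeneous of degree $m-n$ for $\delta^A$, that is $\delta^A_t(V_{\nu,\mu}(i,j))=\exp(\sqrt{-1}(m-n)t)V_{\nu,\mu}(i,j)$; being homogeneous it is entire analytic, and $\delta^A_{i\log\gamma}(V_{\nu,\mu}(i,j)^\ast)=\gamma^{m-n}V_{\nu,\mu}(i,j)^\ast$. Applying the KMS condition \eqref{eq:KMSstate} with $X=V_{\nu,\mu}(i,j)$ and $Y=V_{\nu,\mu}(i,j)^\ast$ gives $\gamma^{m-n}\psi(V_{\nu,\mu}(i,j)V_{\nu,\mu}(i,j)^\ast)=\psi(V_{\nu,\mu}(i,j)^\ast V_{\nu,\mu}(i,j))$, which rearranges to the asserted identity.

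The only genuinely delicate step is part (ii): one must verify that the intermediate elements remain in $\WRA$ at each peeling and that the support conditions licensing each simplification are exactly the admissibility constraints built into $V_{\nu,\mu}(i,j)$. Once the local rule for $\Ad(U_A)$ is in place, parts (i) and (iii) are essentially one-line applications of Lemma \ref{lem:6.4} and the KMS condition respectively, while part (ii) is an iteration of the same rule with careful bookkeeping of the Cuntz--Krieger relations.
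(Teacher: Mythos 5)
Your proof is correct and follows essentially the same route as the paper: part (ii) is exactly the paper's conjugation of $V_{\nu,\mu}(i,j)^*V_{\nu,\mu}(i,j)$ by $U_A^{n+1}$ combined with Lemma \ref{lem:6.4}, and part (iii) is the paper's application of the KMS condition to the $\delta^A$-homogeneous element $V_{\nu,\mu}(i,j)^*$ (your exponent $\gamma^{m-n}$ for $\delta^A_{i\log\gamma}(V_{\nu,\mu}(i,j)^*)$ is in fact the correct one, and it yields the stated identity). The only cosmetic difference is in (i), where the paper swaps the degree-zero element $T_i^*\otimes S_i$ directly via the KMS condition instead of conjugating by $U_A$; since Lemma \ref{lem:6.4} is itself precisely this KMS swap applied to the fixed unitary $U_A$, the two computations are identical in substance.
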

\begin{proof}
(i)
Since $T_i^* \otimes S_i$ belongs to $\WRA$ such that 
$\delta^A_{i\log\gamma}(T_i^* \otimes S_i) = T_i^* \otimes S_i,$
we have 
\begin{align*}
\psi(
V_{\nu, \mu}(i,j)V_{\nu, \mu}(i,j)^*)
=& \psi(T_i^*T_i
\otimes
S_i S_{\mu_1}\cdots S_{\mu_m} S_j
S_j^*S_{\mu_m}^*\cdots S_{\mu_1}^*S_i^*)\\
=& \psi((T_i^*\otimes S_i ) \cdot(T_i
\otimes
S_{\mu_1}\cdots S_{\mu_m} S_j
S_j^*S_{\mu_m}^*\cdots S_{\mu_1}^*S_i^*))\\
=& \psi((T_i
\otimes
S_{\mu_1}\cdots S_{\mu_m} S_j
S_j^*S_{\mu_m}^*\cdots S_{\mu_1}^*S_i^*)
\cdot (T_i^*\otimes S_i ) )\\
=& \psi(T_i T_i^*
\otimes
S_{\mu_1}\cdots S_{\mu_m} S_j
S_j^*S_{\mu_m}^*\cdots S_{\mu_1}^*S_i^* S_i  )\\
=& p_{m,\mu}(i,j).
\end{align*}

(ii)
We have 
$
V_{\nu, \mu}(i,j)^*V_{\nu, \mu}(i,j)
= T_j T_{\nu_n}\cdots T_{\nu_1}T_i T_i^*T_{\nu_1}^*\cdots T_{\nu_n}^*T_j^*
\otimes
S_j^* S_j
$
and hence
$$
U_A^{n+1} V_{\nu, \mu}(i,j)^*V_{\nu, \mu}(i,j) U_A^{* n+1}
= T_i T_i^*
\otimes
S_{\nu_1} \cdots S_{\nu_n} S_jS_j^* S_{\nu_n}^*\cdots S_{\nu_1}^*.
$$
By Lemma \ref{lem:6.4},
we have the desired identity.

(iii)
As
$\delta^A_{i\log\gamma}( V_{\nu, \mu}(i,j) ^*)
=\gamma^{n-m} V_{\nu, \mu}(i,j)^*, 
$
the KMS condition for $\psi$ ensures us the desired identity.
\end{proof}
The preceding lemma tells us that the values
$p_{m,\mu}(i,j)$ and $p_{n,\nu}(i,j)$ coincide each other 
for $m=n$ as long as 
$(i,\mu_1,\dots,\mu_m, j) \in B_{m+2}(\bar{X}_A),\,
  (i,\nu_1,\dots,\nu_n, j) \in B_{n+2}(\bar{X}_A).$
Hence the value 
$p_{n,\nu}(i,j)$ does not depend on the choice of the word $\nu$
as long as 
the length of $\nu$ is $n$ and 
$(i,\nu_1,\dots,\nu_n, j) \in B_{n+2}(\bar{X}_A).$
We may thus define 
$p_n(i,j)$ by  $p_{n,\nu}(i,j)$ for some $\nu$ with
$(i,\nu_1,\dots,\nu_n, j) \in B_{n+2}(\bar{X}_A).$
If there are  no word $\nu$ such as 
  $(i,\nu_1,\dots,\nu_n, j) \in B_{n+2}(\bar{X}_A),$
then we define $p_n(i,j)$ to be zero.

\begin{lemma}\label{lem:6.8}
Let $i,j=1,\dots,n$ and $n\in \Zp$.
\begin{enumerate}
\renewcommand{\theenumi}{\roman{enumi}}
\renewcommand{\labelenumi}{\textup{(\theenumi)}}
\item
Assume $A^{n+1}(i,j) >0$ and  $A^{n+2}(i,j) >0.$
Then we have $p_n(i,j) = \gamma p_{n+1}(i,j).$
\item
Assume $A^{n+1}(i,j) >0$. Then we have
\begin{equation*}
p_n(i,j) = \sum_{k=1}^N A(j,k) p_{n+1}(i,k)  
         = \sum_{h=1}^N A(h,i) p_{n+1}(h,j).  
\end{equation*}
\item
Assume $A^{n}(i,j) >0$ and  $A^{n+1}(i,j) >0.$
Then we have 
\begin{equation*}
\gamma p_n(i,j) = \sum_{k=1}^N A(j,k) p_n(i,k)  
         = \sum_{h=1}^N A(h,i) p_n(h,j).  
\end{equation*}
\end{enumerate}
\end{lemma}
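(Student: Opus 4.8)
The plan is to derive all three statements from the single relation furnished by the preceding lemma (Lemma~\ref{lem 6.7}) together with the Cuntz--Krieger relations \eqref{eq:CKA} and \eqref{eq:CKTA}, keeping careful track of which hypotheses on powers of $A$ guarantee that the words involved actually exist (so that the relevant values $p_n(\cdot,\cdot)$ are the genuine, well-defined ones rather than the default value $0$).

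For (i) I would choose an admissible word $\mu$ of length $n$ with $(i,\mu_1,\dots,\mu_n,j)$ admissible (available since $A^{n+1}(i,j)>0$) and an admissible word $\nu$ of length $n+1$ with $(i,\nu_1,\dots,\nu_{n+1},j)$ admissible (available since $A^{n+2}(i,j)>0$), and form the single partial isometry $V=V_{\nu,\mu}(i,j)\in\WRA$. By Lemma~\ref{lem 6.7}(i),(ii) the two relevant values are $\psi(VV^*)=p_{n,\mu}(i,j)=p_n(i,j)$ and $\psi(V^*V)=p_{n+1,\nu}(i,j)=p_{n+1}(i,j)$, while Lemma~\ref{lem 6.7}(iii) gives $\psi(VV^*)=\gamma^{(n+1)-n}\psi(V^*V)$. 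Combining these yields $p_n(i,j)=\gamma\,p_{n+1}(i,j)$ at once.

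For (ii) the two equalities are obtained by inserting a Cuntz--Krieger relation into the two available descriptions of $p_n(i,j)$. Fix an admissible $\nu$ of length $n$ with $(i,\nu,j)$ admissible. Multiplying $S_j^*S_j=\sum_k A(j,k)S_kS_k^*$ by $S_j$ on the left and $S_j^*$ on the right gives $S_jS_j^*=\sum_k A(j,k)S_jS_kS_k^*S_j^*$; substituting this into the defining expression $p_n(i,j)=\psi(T_iT_i^*\otimes S_{\nu_1}\cdots S_{\nu_n}S_jS_j^*S_{\nu_n}^*\cdots S_{\nu_1}^*)$ decomposes the $S$-projection into the orthogonal pieces indexed by the $k$ with $A(j,k)=1$, each of which is precisely the projection computing $p_{n+1}(i,k)$ for the length-$(n+1)$ word $(\nu_1,\dots,\nu_n,j)$. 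This gives $p_n(i,j)=\sum_k A(j,k)p_{n+1}(i,k)$. For the second equality I would use instead the dual description $p_n(i,j)=\psi(T_jT_{\nu_n}\cdots T_{\nu_1}T_iT_i^*T_{\nu_1}^*\cdots T_{\nu_n}^*T_j^*\otimes S_j^*S_j)$ coming from Lemma~\ref{lem 6.7}(ii), and expand the innermost factor by $T_iT_i^*=\sum_h A(h,i)T_iT_hT_h^*T_i^*$, the analogue of the previous identity for \eqref{eq:CKTA} (using $A^t(i,h)=A(h,i)$). Each summand is then the dual projection computing $p_{n+1}(h,j)$ for the length-$(n+1)$ word $(i,\nu_1,\dots,\nu_n)$, whence $p_n(i,j)=\sum_h A(h,i)p_{n+1}(h,j)$. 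In both cases the appeal to the well-definedness of $p_{n+1}(\cdot,\cdot)$ is legitimate because the extended words are admissible exactly when $A(j,k)=1$ (resp.\ $A(h,i)=1$).

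Finally, (iii) follows by feeding (i) into (ii). For every $k$ with $A(j,k)=1$ the two hypotheses combine: $A^n(i,j)>0$ followed by the edge $j\to k$ gives $A^{n+1}(i,k)>0$, and $A^{n+1}(i,j)>0$ followed by the same edge gives $A^{n+2}(i,k)>0$, so (i) applies to the pair $(i,k)$ and yields $p_{n+1}(i,k)=\gamma^{-1}p_n(i,k)$; substituting into the first equality of (ii) gives $\gamma p_n(i,j)=\sum_k A(j,k)p_n(i,k)$, and the symmetric computation (replacing the edge out of $j$ by an edge into $i$) gives the other equality. The point where care is most needed is exactly this bookkeeping of powers of $A$: one must verify that for every index appearing in the sums the value $p_{n+1}$ is governed by a genuinely admissible word, which is precisely why both $A^n(i,j)>0$ and $A^{n+1}(i,j)>0$ are assumed in (iii). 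The Cuntz--Krieger expansions in (ii) are then routine once the correct ``primal'' and ``dual'' forms of $p_n(i,j)$ are used.
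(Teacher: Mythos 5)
Your proof is correct and takes essentially the same route as the paper: part (i) is the same single-partial-isometry KMS computation (the paper rederives the content of Lemma 6.7(iii) from the KMS condition rather than citing it), part (ii) is the same Cuntz--Krieger expansion, and part (iii) supplies exactly the power-of-$A$ bookkeeping that the paper compresses into ``follows from (i) and (ii)''. The only cosmetic difference is in the second equality of (ii), where you expand $T_iT_i^*$ inside the dual form $\psi(V^*V)$ furnished by Lemma 6.7(ii), while the paper expands it in the primal form $\psi(T_iT_i^*\otimes S_{\nu}S_jS_j^*S_{\nu}^*)$ and then re-identifies each summand as $p_{n+1}(h,j)$ using the shift invariance of $\psi$ (Lemma 6.4); both variants rest on the same underlying lemmas.
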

\begin{proof}
(i) Since $A^{n+1}(i,j), \, A^{n+2}(i,j) >0,$
we may find
$\nu =(\nu_1,\dots,\nu_n),\, 
 \mu =(\mu_1,\dots,\mu_{n+1})$ 
such that 
$(i,\nu_1,\dots,\nu_n, j) \in B_{n+2}(\bar{X}_A),\,
  (i,\mu_1,\dots,\mu_{n+1}, j) \in B_{n+3}(\bar{X}_A)$.
Consider 
$V_{\nu,\mu}(i,j) = T_i^*T_{\nu_1}^*\cdots T_{\nu_n}^*T_j^*\otimes
S_i S_{\mu_1}\cdots S_{\mu_{n+1}} S_j.
$
It then follows that 
\begin{align*}
p_n(i,j)
=& p_{n,\nu}(i,j) \\
= & \psi(V_{\nu, \mu}(i,j)^*V_{\nu, \mu}(i,j)) \\
= & \psi(V_{\nu, \mu}(i,j) \delta^A_{i\log\gamma}(V_{\nu, \mu}(i,j)^*)) \\
= & \gamma \psi(V_{\nu, \mu}(i,j) V_{\nu, \mu}(i,j)^*) \\
= & \gamma p_{n+1,\mu}(i,j) =\gamma p_{n+1}(i,j).
\end{align*}
(ii) 
Since $A^{n+1}(i,j) >0,$ 
we may find
$\nu =(\nu_1,\dots,\nu_n)$ such that 
$(i,\nu_1,\dots,\nu_n, j) \in B_{n+2}(\bar{X}_A)$.
It then follows that 
\begin{align*}
p_n(i,j)
=& p_{n,\nu}(i,j) \\
= & \psi(T_i T_i^*
\otimes
S_{\nu_1}\cdots S_{\nu_n} S_j
S_j^*S_{\nu_n}^*\cdots S_{\nu_1}^*) \\
= & \sum_{k=1}^N A(j,k)
\psi(T_i T_i^*
\otimes
S_{\nu_1}\cdots S_{\nu_n} S_jS_kS_k^*
S_j^*S_{\nu_n}^*\cdots S_{\nu_1}^*) 
 \\
= & \sum_{k=1}^N A(j,k) p_{n+1,\nu j}(i,k)
=  \sum_{k=1}^N A(j,k) p_{n+1}(i,k).
\end{align*}
We also see that 
\begin{align*}
p_n(i,j)
= & \sum_{h=1}^N A^t(i,h)
\psi(T_i T_h T_i^*T_h^*
\otimes
S_{\nu_1}\cdots S_{\nu_n} S_j S_j^*S_{\nu_n}^*\cdots S_{\nu_1}^*) 
 \\
= & \sum_{h=1}^N A(h,i) p_{n+1,i \nu j}(h,j)
=  \sum_{h=1}^N A(h,i) p_{n+1}(h,j).
\end{align*}
The assertion (iii) follows from (i) and (ii).
\end{proof}
\begin{lemma}\label{lem:6.9}
For $i=1,\dots,N$ and $n \in \Zp$, we have
\begin{enumerate}
\renewcommand{\theenumi}{\roman{enumi}}
\renewcommand{\labelenumi}{\textup{(\theenumi)}}
\item
$
\sum_{j=1}^N A^{n+1}(i,j) p_n(i,j) = \psi(T_i^*T_i \otimes S_i S_i^*)
$
and hence 
$
\sum_{i,j=1}^N A^{n+1}(i,j) p_n(i,j) = 1.
$
\item
$
\sum_{j=1}^N A^{n+1}(j,i) p_n(i,j) = \psi(T_iT_i^* \otimes S_i^* S_i)
$
and hence
$
\sum_{i,j=1}^N A^{n+1}(j,i) p_n(i,j) = 1.
$
\end{enumerate}
\end{lemma}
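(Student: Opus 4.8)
The plan is to prove (i) by collapsing the weighted sum into a single telescoping sum of range projections and then reading off the value; (ii) will then follow by the same device with the roles of the $S_i$ and the $T_i$ (equivalently, of $A$ and $A^t$) interchanged. For (i) I would start from
\[
p_n(i,j)=p_{n,\nu}(i,j)=\psi\bigl(T_iT_i^*\otimes S_{\nu_1}\cdots S_{\nu_n}S_jS_j^*S_{\nu_n}^*\cdots S_{\nu_1}^*\bigr),
\]
which by Lemma~\ref{lem 6.7} is independent of the admissible word $\nu$. Since for each $j$ there are exactly $A^{n+1}(i,j)$ words $\nu$ with $(i,\nu_1,\dots,\nu_n,j)\in B_{n+2}(\bar{X}_A)$, the coefficient $A^{n+1}(i,j)$ merely counts these words, so that
\[
\sum_{j=1}^N A^{n+1}(i,j)\,p_n(i,j)=\psi\Bigl(T_iT_i^*\otimes\textstyle\sum_{\rho}S_\rho S_\rho^*\Bigr),
\]
where $\rho=(\nu_1,\dots,\nu_n,j)$ ranges over all words in $B_{n+1}(\bar{X}_A)$ with $A(i,\rho_1)=1$.

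The key computation is then the telescoping identity $\sum_{\rho\in B_{n+1}(\bar{X}_A),\,A(i,\rho_1)=1}S_\rho S_\rho^*=S_i^*S_i$, which I would prove by induction on the length, peeling off the leading letter $\rho_1$ and using the Cuntz--Krieger relation \eqref{eq:CKA} together with $S_{\rho_1}S_{\rho_1}^*S_{\rho_1}=S_{\rho_1}$. This gives $\sum_j A^{n+1}(i,j)p_n(i,j)=\psi(T_iT_i^*\otimes S_i^*S_i)$. To recover the stated right-hand side I would then apply the KMS condition to the two degree-zero elements $T_i^*\otimes S_i$ and $T_i\otimes S_i^*$, both of which lie in $\RA\subset\WRA$ and are fixed by $\delta^A$; the KMS relation $\psi(XY)=\psi(YX)$ for these fixed elements yields $\psi(T_i^*T_i\otimes S_iS_i^*)=\psi(T_iT_i^*\otimes S_i^*S_i)$, which is exactly $\psi(T_i^*T_i\otimes S_iS_i^*)$. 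Finally, summing over $i$ and using $\sum_{i=1}^N T_i^*T_i\otimes S_iS_i^*=E_A=1_{\WRA}$ with $\psi(1_{\WRA})=1$ proves $\sum_{i,j}A^{n+1}(i,j)p_n(i,j)=1$.

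For (ii) the plan is to run the mirror argument, telescoping now on the $\OTA$-factor. Here I would use the second expression for $p_n$ furnished by Lemma~\ref{lem 6.7}(ii), namely $\psi(V_{\nu,\mu}(i,j)^*V_{\nu,\mu}(i,j))=p_n(i,j)$ with $V^{*}V$ of the form $T_\tau T_\tau^*\otimes S^{*}S$, where $\tau\in B_{n+2}(\bar X_{A^t})$; in the orientation relevant to the incoming sum the fixed $S$-factor becomes $S_i^*S_i$ and $\tau$ begins at $i$. Telescoping $\sum_\tau T_\tau T_\tau^*=T_iT_i^*$ by induction from \eqref{eq:CKTA} (using $A^t(a,b)=A(b,a)$, so that the relevant multiplicity is the incoming count to $i$, i.e.\ $A^{n+1}(\cdot\,,i)$) produces $\psi(T_iT_i^*\otimes S_i^*S_i)$, and summing over $i$ again collapses to $\psi(E_A)=1$ via the second form $E_A=\sum_i T_iT_i^*\otimes S_i^*S_i$.

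The main obstacle — and really the only delicate point — is the orientation bookkeeping needed to identify the range projection of the partial isometry with the correct $\nu$-independent quantity $p_n$: one must track precisely which endpoint is pinned by the fixed factor $T_iT_i^*$ (resp.\ $S_i^*S_i$) and which endpoint is summed, so that the telescoping of \eqref{eq:CKA} (resp.\ \eqref{eq:CKTA}) is driven by the multiplicity $A^{n+1}(i,j)$ for the outgoing sum in (i) and by $A^{n+1}(j,i)$ for the incoming sum in (ii). I expect that once the two expressions for $p_n$ from Lemma~\ref{lem 6.7} together with the $U_A$-invariance of $\psi$ from Lemma~\ref{lem:6.4} are used to line up these orientations, the Cuntz--Krieger telescoping and the passage to $\psi(E_A)=1$ are both immediate.
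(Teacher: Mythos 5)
Your treatment of (i) is correct and in substance coincides with the paper's proof: the paper starts from $\psi(T_i^*T_i\otimes S_iS_i^*)$, iterates the relation \eqref{eq:CKA} to write $S_iS_i^*=\sum_{j}\sum_{\mu}S_{i\mu_1\cdots\mu_n j}S_{i\mu_1\cdots\mu_n j}^*$, and identifies each term with $p_n(i,j)$ by Lemma \ref{lem 6.7}(i) --- the same telescoping you run in the opposite direction --- while your final flip $\psi(T_iT_i^*\otimes S_i^*S_i)=\psi(T_i^*T_i\otimes S_iS_i^*)$, via the KMS condition applied to the $\delta^A$-fixed element $T_i^*\otimes S_i$, is exactly the observation the paper records immediately after the lemma.

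The genuine problem is in (ii), and it sits precisely at the ``orientation bookkeeping'' you defer. Telescoping $T_iT_i^*=\sum_\tau T_\tau T_\tau^*$ over words $\tau\in B_{n+2}(\bar{X}_{A^t})$ beginning with $i$, a typical term is $\psi(T_iT_{\nu_n}\cdots T_{\nu_1}T_jT_j^*T_{\nu_1}^*\cdots T_{\nu_n}^*T_i^*\otimes S_i^*S_i)$, and Lemma \ref{lem 6.7}(ii) identifies this with $p_{n,\nu}(j,i)$, because the underlying word $(j,\nu_1,\dots,\nu_n,i)\in B_{n+2}(\bar{X}_A)$ runs from $j$ \emph{to} $i$. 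Hence what your computation actually establishes is
\[
\sum_{j=1}^N A^{n+1}(j,i)\,p_n(j,i)=\psi(T_iT_i^*\otimes S_i^*S_i),
\]
with the arguments of $p_n$ transposed relative to the identity you set out to prove. This is not a slip that redoing the bookkeeping can fix in favour of the printed formula, because that formula is false in general: for $n\ge n_0$ Lemma \ref{lem:6.12} gives $p_n(i,j)=\beta^{-(n+1)}b_ia_j$, so the printed left-hand side equals $\beta^{-(n+1)}b_i\sum_j A^{n+1}(j,i)a_j$, whereas the right-hand side equals $a_ib_i$ (by part (i), Lemma \ref{lem:6.12}, and the flip $\psi(T_iT_i^*\otimes S_i^*S_i)=\psi(T_i^*T_i\otimes S_iS_i^*)$); their equality for every $i$ would force the Perron eigenvector $a$ of $A$ to be an eigenvector of $A^t$ as well, which fails for non-symmetric matrices such as the matrix $B$ of Proposition \ref{prop:example}. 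In other words, (ii) must be read with $p_n(j,i)$ --- that is what the paper's ``(ii) is similarly shown to (i)'' produces, and its ``hence'' part then reduces to that of (i) by renaming indices --- and your mirror argument does prove this corrected identity; but your proposal asserts the printed statement while describing a computation that yields the transposed one, and that discrepancy needs to be identified and addressed rather than left inside the postponed bookkeeping.
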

\begin{proof}
(i)
We have the following identities
\begin{align*}
\psi(T_i^*T_i \otimes S_i S_i^*)
= & \sum_{{\mu_1}=1}^N A(i,\mu_1) 
\psi(T_i^*T_i \otimes S_i S_{\mu_1} S_{\mu_1}^*S_i^*) \\
= & \sum_{j=1}^N \sum_{\mu_1, \dots, \mu_n =1}^N A(i,\mu_1)A(\mu_1,\mu_2)\cdots A(\mu_n,j) 
\psi(T_i^*T_i \otimes S_{i \mu_1\cdots\mu_n j}S_{i \mu_1\cdots\mu_n j}^*) \\
= & \sum_{j=1}^N A^{n+1}(i,j) p_n(i,j).
\end{align*}
We also have  $\sum_{i=1}^N \psi(T_i^*T_i \otimes S_i S_i^*) = \psi(E_A) =1$.

(ii) is similarly shown to (i) .
\end{proof}
We notice that 
$\psi(T_i^*T_i \otimes S_i S_i^*) = \psi(T_i T_i^* \otimes S_i^* S_i)
$
because of the equality $\delta^A_{i\log\gamma}(T_i^* \otimes S_i) =T_i^* \otimes S_i$
and the KMS condition for $\psi$.
Recall that we are assuming the matrix $A$ is aperiodic so that 
there exists $n_0 \in \N$ such that 
$A^n(i,j) >0$ for all $i,j =1,\dots,N$ and $n\ge n_0$.
\begin{lemma}\label{lem:6.10}
 $\gamma = \beta$.
\end{lemma}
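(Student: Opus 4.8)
The plan is to turn the recursions in Lemma~\ref{lem:6.8} into a single eigenvalue equation for a nonnegative vector and then invoke the uniqueness part of the Perron--Frobenius theorem. Since $A$ is aperiodic, I would first fix any $n\ge n_0$, so that $A^n(i,j)>0$ and $A^{n+1}(i,j)>0$ for all $i,j=1,\dots,N$; this guarantees that every $p_n(i,j)$ is defined and that Lemma~\ref{lem:6.8}(iii) is available for every pair $(i,j)$. I then assemble the nonnegative numbers into the matrix $P_n=[p_n(i,j)]_{i,j=1}^N$.

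The key step is to read the second equality of Lemma~\ref{lem:6.8}(iii), namely $\gamma\, p_n(i,j)=\sum_{h=1}^N A(h,i)p_n(h,j)$, column by column. Writing $A(h,i)=A^t(i,h)$, the $j$th column $v^{(j)}=(p_n(h,j))_{h=1}^N$ of $P_n$ satisfies $A^t v^{(j)}=\gamma\, v^{(j)}$, so each column of $P_n$ is a nonnegative eigenvector of the irreducible matrix $A^t$ for the eigenvalue $\gamma$. To ensure some column is nonzero, I would use Lemma~\ref{lem:6.9}(i): since $\sum_{i,j=1}^N A^{n+1}(i,j)p_n(i,j)=1$, the entries $p_n(i,j)$ do not all vanish, so at least one column $v^{(j_0)}$ is a nonzero nonnegative eigenvector of $A^t$ with eigenvalue $\gamma$.

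The final step identifies $\gamma$. The matrix $A^t$ is irreducible and nonnegative with Perron--Frobenius eigenvalue $\beta$ (the same as that of $A$), and $A$ admits a strictly positive eigenvector $w>0$ with $Aw=\beta w$, equivalently $w^t A^t=\beta\, w^t$. Pairing $A^t v^{(j_0)}=\gamma\, v^{(j_0)}$ with $w$ on the left gives $\beta\, w^t v^{(j_0)}=w^t A^t v^{(j_0)}=\gamma\, w^t v^{(j_0)}$, and since $w>0$ while $v^{(j_0)}\ge 0$ is nonzero, the quantity $w^t v^{(j_0)}$ is strictly positive, so dividing yields $\gamma=\beta$.

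I expect the only delicate point to be the last step: for a general nonnegative matrix the eigenvalue attached to a nonnegative eigenvector need not be the spectral radius, but the irreducibility of $A$ (hence of $A^t$), through the strictly positive left eigenvector $w$, forces it to be $\beta$. All the genuine structural work is already packaged into Lemma~\ref{lem:6.8} and Lemma~\ref{lem:6.9}, so this lemma is essentially a reading of the eigenvalue relation combined with Perron--Frobenius.
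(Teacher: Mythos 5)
Your proof is correct and takes essentially the same route as the paper: both extract the eigenvalue equation from Lemma \ref{lem:6.8}(iii), use the normalization of Lemma \ref{lem:6.9} to obtain a nonzero nonnegative eigenvector, and conclude by Perron--Frobenius. The only differences are cosmetic --- you read the relation column-wise (eigenvectors of $A^t$) where the paper reads it row-wise (eigenvectors of $A$), and your pairing with the strictly positive eigenvector $w$ spells out explicitly the Perron--Frobenius uniqueness step that the paper handles by asserting positivity of the eigenvector.
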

\begin{proof}
 Lemma \ref{lem:6.8} together with Lemma \ref{lem:6.9} implies that
the vector $[p_n(i,k)]_{k=1}^N$ is  a nonnegative eigenvector of the matrix $A$
of eigenvalue $\gamma$ for each $n \in \N$ and $i = 1,\dots,N$.
Since $A$ is aperiodic,   $[p_n(i,k)]_{k=1}^N$ is actually  a positive eigenvector 
of eigenvalue $\gamma$.
By Perron--Frobenius theorem, 
$\gamma$ coincides with the Perron-Frobenius eigenvalue $\beta$.
\end{proof}
We have seen that $\gamma$ must be the Perron-Frobenius eigenvalue of the matrix $A$ by Lemma \ref{lem:6.10}.
Its proof does not need the assumption $\gamma>1$
that we had first assumed.   
Now the matrix $A$ is aperiodic and not any permutation
so that its Perron-Frobenius eigenvalue  is always greater than one.
Hence $\gamma(=\beta)$ becomes greater than one without assumption 
 $\gamma>1$.
 
Recall that $[a_j]_{j=1}^N, [b_i]_{i=1}^N$ be the positive eigenvectors 
of $A$ and $A^t$ for the eigenvalue $\beta$ respectively such that 
$\sum_{i=1}^N a_i b_i = 1$. 
We have the following lemma.
\begin{lemma}\label{lem:6.11}
For $n \ge n_0$ and $i,j=1,\dots,N$, we have
\begin{equation}
p_n(i,j) = 
\frac{b_i a_j} {(\sum_{h=1}^N b_h)\cdot(\sum_{k=1}^N a_k)} \sum_{h,k=1}^N p_n(h,k).
\label{eq:6.11.1}
\end{equation}
\end{lemma}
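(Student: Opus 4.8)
The plan is to combine the two eigenvector identities recorded in Lemma~\ref{lem:6.8}(iii) with the identification $\gamma=\beta$ from Lemma~\ref{lem:6.10}, and then to extract a rank-one (product) form for $p_n(i,j)$. First I would note that the hypothesis $n\ge n_0$ guarantees $A^n(i,j)>0$ and $A^{n+1}(i,j)>0$ for \emph{all} $i,j$, so that Lemma~\ref{lem:6.8}(iii) is available for every pair $(i,j)$. Fixing the first index $i$, the relation
\begin{equation*}
\sum_{k=1}^N A(j,k)\,p_n(i,k) = \beta\, p_n(i,j), \qquad j=1,\dots,N,
\end{equation*}
says precisely that the vector $[p_n(i,k)]_{k=1}^N$ is a nonnegative eigenvector of $A$ for the eigenvalue $\beta$. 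As already observed in the proof of Lemma~\ref{lem:6.10}, aperiodicity of $A$ together with the Perron--Frobenius theorem forces this vector to be a strictly positive multiple of the Perron eigenvector $[a_k]_{k=1}^N$; hence there is a constant $c_i>0$ with $p_n(i,k)=c_i a_k$ for all $k$.

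Next I would insert this product form into the second identity of Lemma~\ref{lem:6.8}(iii), namely $\sum_{h=1}^N A(h,i)\,p_n(h,j)=\beta\,p_n(i,j)$. Substituting $p_n(h,j)=c_h a_j$ and cancelling the strictly positive factor $a_j$ yields $\sum_{h=1}^N A(h,i)\,c_h=\beta\,c_i$, i.e.\ $[c_i]_{i=1}^N$ is a strictly positive eigenvector of $A^t$ for the eigenvalue $\beta$. By Perron--Frobenius again it must be a positive multiple of $[b_i]_{i=1}^N$, so $c_i=\kappa\, b_i$ for a single constant $\kappa>0$ (depending only on $n$). Therefore
\begin{equation*}
p_n(i,j) = c_i a_j = \kappa\, b_i a_j \qquad \text{for all } i,j=1,\dots,N.
\end{equation*}

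Finally I would determine $\kappa$ by a global summation: summing $p_n(h,k)=\kappa\,b_h a_k$ over all $h,k$ gives $\sum_{h,k=1}^N p_n(h,k)=\kappa\,(\sum_{h=1}^N b_h)(\sum_{k=1}^N a_k)$, whence $\kappa=\big(\sum_{h,k=1}^N p_n(h,k)\big)\big/\big((\sum_{h=1}^N b_h)(\sum_{k=1}^N a_k)\big)$. Substituting this value of $\kappa$ into $p_n(i,j)=\kappa\,b_i a_j$ produces exactly \eqref{eq:6.11.1}.

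The one point demanding care is the invocation of Perron--Frobenius uniqueness: it applies only to nonzero nonnegative eigenvectors. For the rows $[p_n(i,k)]_k$ this strict positivity is exactly what the proof of Lemma~\ref{lem:6.10} supplies (so each $c_i$ is genuinely positive), and the vector $[c_i]_i$ is then automatically strictly positive, validating the second application. Once these positivity facts are in hand, the remainder is routine linear algebra; I do not expect any genuine obstruction beyond keeping track of the row and column eigenvector relations.
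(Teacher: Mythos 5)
Your proposal is correct and follows essentially the same route as the paper: both use Lemma \ref{lem:6.8} with $\gamma=\beta$ to see that each row $[p_n(i,k)]_{k=1}^N$ is a positive eigenvector of $A$, invoke Perron--Frobenius uniqueness to write $p_n(i,j)=c_i a_j$, then identify $[c_i]_{i=1}^N$ (the paper uses the proportional vector of row sums $[\sum_j p_n(i,j)]_{i=1}^N$) as a positive eigenvector of $A^t$, hence a multiple of $[b_i]_{i=1}^N$, and finally fix the constant by summing over all entries. The only difference is cosmetic, and your explicit attention to the positivity needed for the uniqueness argument matches what the paper implicitly borrows from the proof of Lemma \ref{lem:6.10}.
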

\begin{proof}
We fix $n \ge n_0$.
For a fixed $i=1,\dots,N,$
the vector $[p_n(i,k)]_{k=1}^N$ is  a positive eigenvector 
of the matrix $A$ for the eigenvalue $\beta$.
By the uniqueness of the positive eigenvector of $A$, we may find a positive real number
$c_{n,i}$ such that 
\begin{equation}
p_n(i,j) = c_{n,i}a_j \quad \text{ for } j =1,\dots,N. \label{eq:6.11.0}
\end{equation}
By Lemma \ref{lem:6.8},
we know that the vector
$[\sum_{j=1}^N p_n(i,j)]_{i=1}^N$ 
is a positive eigenvector of the matrix $A^t$ 
for the eigenvalue $\beta$.
Hence the normalized positive eigenvectors
$[\frac{\sum_{j=1}^N p_n(i,j)}{ \sum_{h,k=1}^N p_n(h,k) } ]_{i=1}^N
$
and
$[\frac{b_i}{ \sum_{k=1}^N b_k } ]_{i=1}^N$
coincide, so that 
we have
\begin{equation}
\sum_{j=1}^N p_n(i,j) = b_i \frac{ \sum_{h,k=1}^N p_n(h,k) }{ \sum_{k=1}^N b_k } 
\quad\text{ for } i=1,\dots,N. \label{eq:6.11.2}
\end{equation}
By \eqref{eq:6.11.0} and \eqref{eq:6.11.2}, we have
\begin{equation*} 
c_{n,i} =\frac{\sum_{j=1}^N p_n(i,j)}{\sum_{j=1}^N a_j}
        =\frac{b_i { \sum_{h,k=1}^N p_n(h,k) }  }{( \sum_{k=1}^N b_k) (\sum_{j=1}^N a_j)}
\end{equation*}
so that we know \eqref{eq:6.11.1} by using \eqref{eq:6.11.0} again.
\end{proof}
We thus obtain the following lemma.
\begin{lemma}\label{lem:6.12}
For $n \ge n_0$ and $i,j=1,\dots,N$, we have
\begin{equation*}
p_n(i,j) = 
\frac{1}{\beta^{n+1}}{b_i a_j}.
\end{equation*}
\end{lemma}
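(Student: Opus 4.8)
The plan is to determine the single remaining scalar in the factorization already supplied by Lemma~\ref{lem:6.11}. That lemma tells us that for $n\ge n_0$ the quantity $p_n(i,j)$ is proportional to $b_i a_j$, namely $p_n(i,j)=C_n\, b_i a_j$ where the constant $C_n=\bigl(\sum_{h,k}p_n(h,k)\bigr)/\bigl((\sum_h b_h)(\sum_k a_k)\bigr)$ does not depend on $i$ or $j$. Thus the entire content of the statement reduces to showing $C_n=\beta^{-(n+1)}$, and for this I would feed the factorized form into a normalization identity and exploit the eigenvector equation for $a$.

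Concretely, I would substitute $p_n(i,j)=C_n b_i a_j$ into the identity $\sum_{i,j}A^{n+1}(i,j)p_n(i,j)=1$ of Lemma~\ref{lem:6.9}(i). Summing first over $j$ and using that $a$ is the $\beta$-eigenvector of $A$, so that $\sum_j A^{n+1}(i,j)a_j=\beta^{n+1}a_i$ (with $\gamma=\beta$ by Lemma~\ref{lem:6.10}), the double sum collapses to $C_n\beta^{n+1}\sum_i a_i b_i$. The normalization $\sum_i a_i b_i=1$ then gives $C_n\beta^{n+1}=1$, i.e.\ $C_n=\beta^{-(n+1)}$, and hence $p_n(i,j)=\beta^{-(n+1)}b_i a_j$, as claimed.

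I expect no genuine obstacle here: all the structural input --- the factorization through $b_i a_j$, the value $\gamma=\beta$, and the summation identities --- has been established in Lemmas~\ref{lem:6.9}, \ref{lem:6.10} and \ref{lem:6.11}. The only point requiring a little care is matching the correct eigenvector to the correct index: the sum over $j$ must be resolved using $Aa=\beta a$ rather than $A^t b=\beta b$, which is exactly why Lemma~\ref{lem:6.9}(i), and not (ii), is the convenient normalization to invoke. The hypothesis $n\ge n_0$ is used only so that Lemma~\ref{lem:6.11} applies; aperiodicity guarantees that $[p_n(i,k)]_{k=1}^N$ is a strictly positive $\beta$-eigenvector, whence its direction is pinned down by the Perron--Frobenius uniqueness already exploited upstream.
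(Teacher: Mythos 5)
Your proposal is correct and follows essentially the same route as the paper's proof: both substitute the factorization $p_n(i,j)\propto b_ia_j$ from Lemma \ref{lem:6.11} into the normalization $\sum_{i,j}A^{n+1}(i,j)p_n(i,j)=1$ of Lemma \ref{lem:6.9}(i), collapse the sum via $\sum_j A^{n+1}(i,j)a_j=\beta^{n+1}a_i$ and $\sum_i a_ib_i=1$, and solve for the remaining scalar. Naming the constant $C_n$ rather than carrying $\sum_{h,k}p_n(h,k)$ explicitly is only a cosmetic difference.
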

\begin{proof}
We fix $n \ge n_0$.
By Lemma \ref{lem:6.11} together with Lemma \ref{lem:6.9}, we have
\begin{equation*}
1 = \sum_{i,j=1}^N A^{n+1}(i,j)p_n(i,j) 
  = \sum_{i,j=1}^N
\frac{A^{n+1}(i,j) b_i a_j} {(\sum_{h=1}^N b_h)\cdot(\sum_{k=1}^N a_k)} \sum_{h,k=1}^N p_n(h,k).
\end{equation*}
As $[a_j]_{j=1}^N$ is a positive eigenvector of $A$ for the eigenvalue $\beta$, we have 
\begin{equation*}
\sum_{i,j=1}^N A^{n+1}(i,j) b_i a_j = \sum_{i=1}^N \beta^{n+1}b_i a_i =  \beta^{n+1}
\end{equation*}
so that the equalities
\begin{equation*}
1 =
\frac{\beta^{n+1}} {(\sum_{h=1}^N b_h)\cdot(\sum_{k=1}^N a_k)} \sum_{h,k=1}^N p_n(h,k)
\end{equation*}
and 
\begin{equation}
\sum_{h,k=1}^N p_n(h,k) = 
\frac{1}{\beta^{n+1}} {(\sum_{i=1}^N b_i)\cdot(\sum_{j=1}^N a_j)}
\label{eq:6.12.3}
\end{equation}
hold. 
By \eqref{eq:6.11.1} and \eqref{eq:6.12.3}, we get the desired equality. 
\end{proof}
Consequently, we know  the following proposition.
\begin{proposition} 
The restriction of a KMS state $\psi$ on $\WRA$ to the commutative $C^*$-subalgebra
$C(\bar{X}_A)$ coincides with the state defined by the Parry measure on $\bar{X}_A$.
\end{proposition}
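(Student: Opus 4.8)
The plan is to show that $\psi$ and the Parry measure $\mu$ assign the same mass to every cylinder set of $\bar{X}_A$; since each of $\psi|_{C(\bar{X}_A)}$ and $\mu$ is a regular Borel probability measure (Riesz representation) and the cylinder sets generate the Borel $\sigma$-algebra, this will give the claim. Under the identification of $C(\bar{X}_A)$ recalled in Section~2, the cylinder $U_{[\xi\nu]_i^{i+k+n-1}}$ corresponds to the projection $T_{\bar{\xi}}T_{\bar{\xi}}^*\otimes S_\nu S_\nu^*$ with $\bar{\xi}=(\xi_k,\dots,\xi_1)\in B_k(\bar{X}_{A^t})$, $\nu=(\nu_1,\dots,\nu_n)\in B_n(\bar{X}_A)$ and $A(\xi_k,\nu_1)=1$; its Parry mass is $b_{\xi_1}a_{\nu_n}\beta^{-(k+n-1)}$ by \eqref{eq:Parry}. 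So it suffices to prove
\[
\psi(T_{\bar{\xi}}T_{\bar{\xi}}^*\otimes S_\nu S_\nu^*)=b_{\xi_1}a_{\nu_n}\beta^{-(k+n-1)}
\]
for all such $\bar{\xi},\nu$.

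First I would collapse the $\OTA$-factor to a single letter. Using \eqref{eq:CKTA} and $A(\xi_k,\nu_1)=1$, a direct computation gives
\[
U_A\bigl(T_{\bar{\xi}}T_{\bar{\xi}}^*\otimes S_\nu S_\nu^*\bigr)U_A^*
=T_{(\xi_{k-1},\dots,\xi_1)}T_{(\xi_{k-1},\dots,\xi_1)}^*\otimes S_{\xi_k\nu}S_{\xi_k\nu}^*,
\]
since in $\sum_{i,j}T_i^*T_{\bar{\xi}}T_{\bar{\xi}}^*T_j\otimes S_iS_\nu S_\nu^*S_j^*$ only the term $i=j=\xi_k$ survives. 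Iterating this $k-1$ times and applying the shift invariance $\psi(U_AXU_A^*)=\psi(X)$ of Lemma \ref{lem:6.4}, I obtain
\[
\psi(T_{\bar{\xi}}T_{\bar{\xi}}^*\otimes S_\nu S_\nu^*)=\psi(T_{\xi_1}T_{\xi_1}^*\otimes S_\rho S_\rho^*),
\qquad \rho=\xi_2\cdots\xi_k\nu_1\cdots\nu_n,
\]
a word of length $k+n-1$ ending in $\nu_n$. Writing $\rho=\rho'\nu_n$, the right-hand side is precisely $p_{k+n-2}(\xi_1,\nu_n)$ in the notation preceding Lemma \ref{lem:6.8}, and when $k+n-2\ge n_0$ Lemma \ref{lem:6.12} evaluates it as $\beta^{-(k+n-1)}b_{\xi_1}a_{\nu_n}$, exactly the Parry value.

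For the short cylinders with $k+n-2<n_0$ I would refine. Repeatedly applying \eqref{eq:CKA} gives the orthogonal decomposition
\[
T_{\bar{\xi}}T_{\bar{\xi}}^*\otimes S_\nu S_\nu^*=\sum_{\zeta}T_{\bar{\xi}}T_{\bar{\xi}}^*\otimes S_{\nu\zeta}S_{\nu\zeta}^*,
\]
the sum over admissible words $\zeta$ of a fixed length $\ell$ with $A(\nu_n,\zeta_1)=1$. Choosing $\ell$ with $k+n+\ell-2\ge n_0$, the previous paragraph applies to every summand, so $\psi$ and $\mu$ agree on each finer cylinder; summing and using additivity of $\psi$ and of $\mu$ over this partition yields agreement on the original cylinder.

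The main obstacle will be the reduction step: carrying out the $U_A$-conjugation carefully so that it lowers the length of the $\OTA$-word by exactly one while transferring the letter $\xi_k$ onto the $\OA$-word, and then matching the result with the normalized quantity $p_{k+n-2}(\xi_1,\nu_n)$ so that the exponent of $\beta$ and the boundary letters $\xi_1,\nu_n$ line up with \eqref{eq:Parry}. Once this bookkeeping is settled, Lemma \ref{lem:6.12} together with the refinement argument completes the identification of $\psi|_{C(\bar{X}_A)}$ with the Parry measure.
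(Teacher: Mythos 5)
Your proposal is correct and takes essentially the same route as the paper's proof: both evaluate $\psi$ on cylinder projections through the quantities $p_n(i,j)$, invoke Lemma \ref{lem:6.12} to obtain the value $\beta^{-(n+1)}b_i a_j$, match it with the Parry formula \eqref{eq:Parry}, and dispose of short cylinders by refining them into longer ones. Your explicit $U_A$-conjugation step (via Lemma \ref{lem:6.4}) merely spells out the bookkeeping that the paper leaves implicit when it identifies $T_iT_i^*\otimes S_{\nu_1\cdots\nu_n j}S_{\nu_1\cdots\nu_n j}^*$ with the characteristic function of the cylinder $U_{[\xi]_m^{m+n+1}}$ at an arbitrary position $m\in\Z$.
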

\begin{proof}
For $n \ge n_0$
and $\xi=(i,\nu_1,\dots,\nu_n,j) \in B_{n+2}(\bar{X}_A)$,
Lemma \ref{lem:6.12} shows  that 
\begin{equation*}
\psi(T_i T_i^* \otimes S_{\nu_1 \cdots\nu_n j}S_{\nu_1 \cdots\nu_n j}^*)
=\frac{1}{\beta^{n+1}}b_i a_j.
\end{equation*}
Let $\mu$ be the Parry measure on $\bar{X}_A$.
Since the Parry measure of the cylinder set 
$U_{[\xi]_m^{m+n+1}} \subset \bar{X}_A, m\in \Z$
for the word $\xi$ is given by 
\begin{equation*}
\mu(U_{[\xi]_m^{m+n+1}}) = \frac{1}{\beta^{n+1}}b_i a_j
\end{equation*}
by the formula \eqref{eq:Parry}.
Let $\chi_{U_{[\xi]_m^{m+n+1}}}$ be the charachteristic function of the cylinder set
$U_{[\xi]_m^{m+n+1}}$.
Since
\begin{equation*}
\psi(T_i T_i^* \otimes S_{\nu_1 \cdots\nu_n j}S_{\nu_1 \cdots\nu_n j}^*)
=\psi(\chi_{U_{[\xi]_m^{m+n+1}}}),
\end{equation*}
we obtain
\begin{equation*}
\mu(U_{[\xi]_m^{m+n+1}})=\psi(\chi_{U_{[\xi]_m^{m+n+1}}}).
\end{equation*}
Any cylinder set on $\bar{X}_A$ is a finite union of cylinder sets of words
having its length greater than $n_0 +1$.
Hence we may conclude that 
the restriction of $\psi$ to the commutative $C^*$-subalgebra
$C(\bar{X}_A)$ of $\WRA$ coincides with the state defined by the Parry measure on $\bar{X}_A$.
\end{proof}
Therefore we reach the following theorem.
\begin{theorem}\label{thm:KMS}
Assume that the matrix $A$ is aperiodic.
A KMS state on $\WRA$ for the action $\delta^A$ at the inverse temperature $\log \gamma$
exists if and only if $\gamma$ is the Perron--Frobenius eigenvalue of $A$.
The admitted KMS state is unique.
The restriction of the admitted KMS state to the subsalgebra $C(\bar{X}_A)$
is the state defined by the Parry measure on $\bar{X}_A$.
\end{theorem}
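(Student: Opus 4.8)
The plan is to collect the structural lemmas established in this section. Existence at inverse temperature $\log\beta$ is already in hand: Proposition~\ref{prop:kms}(i) exhibits $\tilde{\varphi}$ as a KMS state for $\delta^A$ at $\log\beta$, and Proposition~\ref{prop:kms}(ii) records that its restriction to $C(\bar{X}_A)$ is the Parry measure. So when $\gamma=\beta$ a KMS state exists with the advertised restriction. For the converse direction of the equivalence I would appeal to Lemma~\ref{lem:6.10}: if $\psi$ is any KMS state for $\delta^A$ at inverse temperature $\log\gamma$, then the vectors $[p_n(i,k)]_{k=1}^N$ are positive eigenvectors of $A$ with eigenvalue $\gamma$, whence $\gamma=\beta$ by the Perron--Frobenius theorem. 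Since $A$ is aperiodic and not a permutation, $\beta>1$, so the auxiliary hypothesis $\gamma>1$ used along the way is automatic. This establishes that a KMS state exists precisely when $\gamma$ is the Perron--Frobenius eigenvalue.

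For uniqueness I would show that any KMS state $\psi$ is pinned down on a norm-dense subalgebra. By Lemma~\ref{lem:6.2}, $\psi$ annihilates every spectral subspace $\WRA^{\delta^A}(p)$ with $p\ne0$, and by Lemma~\ref{lem:6.3} this yields $\psi=\psi_0\circ\mathcal{E}_A$ with $\psi_0=\psi|_{\RA}$ a tracial state; since the conditional expectation $\mathcal{E}_A$ does not depend on $\psi$, it suffices to determine $\psi_0$. Lemma~\ref{lem:6.5} then shows that on the spanning elements $T_{\bar{\xi}}T_{\bar{\eta}}^*\otimes S_\mu S_\nu^*$ of $\RA$ the value $\psi_0$ can be nonzero only when $\bar{\xi}=\bar{\eta}$ and $\mu=\nu$, that is, only on $C(\bar{X}_A)$. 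Hence $\psi_0$, and therefore $\psi$, is completely determined by the single restriction $\psi|_{C(\bar{X}_A)}$.

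Finally, the preceding Proposition identifies $\psi|_{C(\bar{X}_A)}$ with the state given by the Parry measure: via Lemma~\ref{lem:6.12} the values $p_n(i,j)=\beta^{-(n+1)}b_i a_j$ match the Parry values of the cylinder sets supplied by \eqref{eq:Parry}, and every cylinder set is a finite union of cylinders on words of length exceeding $n_0+1$. As the Parry measure is the unique measure of maximal entropy, this restriction is forced; by the determination just described the KMS state $\psi$ is then unique and must coincide with $\tilde{\varphi}$. Collecting these facts yields all three assertions simultaneously. The genuinely hard input is not this assembly but the earlier Lemma~\ref{lem:6.5}, whose iterated reduction of a nonzero KMS value relies on the shift invariance $\psi(U_A X U_A^*)=\psi(X)$ of Lemma~\ref{lem:6.4} together with the spectral weight $\gamma^{\pm1}$ furnished by the KMS condition; once that localization onto $C(\bar{X}_A)$ and the eigenvector rigidity of Lemma~\ref{lem:6.12} are available, the theorem follows immediately.
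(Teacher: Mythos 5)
Your proposal is correct and follows essentially the same route as the paper's own proof: existence and the Parry restriction come from Proposition~\ref{prop:kms}, necessity of $\gamma=\beta$ comes from Lemma~\ref{lem:6.10} (whose proof, as you correctly note, does not actually use the provisional hypothesis $\gamma>1$, so that $\gamma>1$ is recovered a posteriori from $\beta>1$), and uniqueness follows by localizing any KMS state onto $C(\bar{X}_A)$ via Lemmas~\ref{lem:6.2}, \ref{lem:6.3} and \ref{lem:6.5} and then identifying the restriction with the Parry measure through Lemma~\ref{lem:6.12}. The only superfluous remark is the appeal to the Parry measure being the unique measure of maximal entropy, which the paper never needs since Lemma~\ref{lem:6.12} pins down the cylinder-set values directly.
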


{\it Acknowledgments:}
This work was supported by JSPS KAKENHI Grant Number 15K04896.


\begin{thebibliography}{99}



\bibitem{AnanRenault}
{\sc C. Anantharaman-Delaroche and J. Renault},
{\it Amenable Groupoids}, 
L'Enseignement Math\'ematique Gen\'eve, 2000.


\bibitem{Bowen1}{\sc R. Bowen},
{\it Markov partitions for axiom A diffeomorphisms},
Amer.\ J.\  Math.\  {\bf 92}(1970), pp.\ 725--747



\bibitem{BR}
{\sc O. Bratteli and D. W. Robinson},
{\it Operator algebras and quantum statistical mechanics II},
 Springer-Verlag, 
 New York, Berlin, Heidelberg  
(1981).


\bibitem{Brown}
{\sc L. G. Brown},
{\it Stable isomorphism of hereditary subalgebras of $C^*$-algebras},
Pacific J.\ Math.\ {\bf 71}(1977), pp.\ 335--348.


\bibitem{CR}
{\sc T. M. Carlsen and J. Rout},
{\it Diagonal-preserving gauge invariant isomorphisms of graph $C^*$-algebras},
preprint, arXiv: 1610.00692 [math. OA].




\bibitem{Cu3}
{\sc J. ~Cuntz}, 
{\it A class of $C^*$-algebras and topological Markov chains II: reducible chains and the Ext-functor for $C^*$-algebras},
Invent.\ Math.\ {\bf 63}(1980),
pp.\ 25--40.

\bibitem{Cu4}
{\sc J. ~Cuntz}, 
{\it On the homotopy groups of the space of endomorphisms of 
a $C^*$-algebra (with applications to topological Markov chains)},
 Operator algebras and group representations, vol. I (Neptun, 1980), 
124--137, Monogr. Stud. Math., 17, Pitman, Boston, MA, 1984.




\bibitem{CK}{\sc J. ~Cuntz and W. ~Krieger},
{\it A class of $C^*$-algebras and topological Markov chains},
 Invent.\ Math.\
 {\bf 56}(1980), pp.\ 251--268.


\bibitem{EFW1981}
{\sc M. Enomoto, M. Fujii and Y. Watatani},
{\it $K_0$-groups and classifications of Cuntz--Krieger algebras}, 
Math.\ Japon. {\bf 26}(1981), pp.\ 443--460.


\bibitem{EFW1984}
{\sc M. Enomoto, M. Fujii and Y. Watatani},
{\it KMS states for gauge action on $\OA$}, 
Math.\ Japon
{\bf 29}(1984),
pp.\ 607--619.


\bibitem{Holton}{\sc C. G. Holton},
{\it The Rohlin property for shifts of finite type},
J. Funct. Anal. {\bf 229}(2005), pp. \ 277--299.

\bibitem{KamPut}{\sc J. Kaminker and I. F. Putnam},
{\it K-theoretic duality of shifts of finite type},
Comm. Math. Phys. {\bf 187}(1997), pp. \ 509--522.


\bibitem{KamPutSpiel}{\sc J. Kaminker, I. F. Putnam and J. Spielberg},
{\it Operator algebras and hyperbolic dynamics},
Operator algebras and quantum field theory (Rome, 1996), 
525--532, Int. Press, Cambridge, MA, 1997.


\bibitem{KilPut}
{\sc D. B. Killough and I. F. Putnam},
{\it Ring and module structures on dimension groups \\
associated with a shift of finite type},
Ergodic Theory Dynam. Systems {\bf 32}(2012), pp.\ 1370--1399.
 


 \bibitem{Kirchberg}
{\sc E. Kirchberg},
{\it The classification of purely infinite $C^*$-algebras using Kasparov's theory}, 
 preprint,1994.


\bibitem{LM}{\sc D. ~Lind and B. ~Marcus},
{\it An introduction to symbolic dynamics and coding},
 Cambridge University Press, Cambridge
(1995).






































%















\bibitem{MaDocMath2017} {\sc K. Matsumoto},
{\it Topological conjugacy of topological Markov shifts 
and Cuntz--Krieger algebras},
Documenta Math. {\bf 22}(2017), pp.\ 873--915.

%
\bibitem{MaPre2017} {\sc K. Matsumoto},
{\it Asymptotic continuous orbit equivalence of Smale spaces and Ruelle algebras},
preprint, arXiv: 1703.07011, to appear in Can. J. Math..

\bibitem{MaPre2018a} {\sc K. Matsumoto},
{\it Flow equivalence of topological Markov shifts and  Ruelle algebras},
preprint, arXiv: 1801.09198.










%

\bibitem{Parry}
{\sc W. Parry},
{\it Intrinsic Markov chains},
Trans. Amer. Math. Soc. {\bf 112}(1964), pp.\ 55-66.
%








\bibitem{Phillips}
{\sc N. C. Phillips},
{\it A classification theorem for nuclear purely infinite simple $C^*$-algebras}, 
 Doc. Math. {\bf 5}(2000), pp. \ 49--114.



\bibitem{Putnam1}{\sc I. F. Putnam},
{\it  $C^*$-algebras from Smale spaces},
Can.\ J.\ Math.\ {\bf 48}(1996), pp.\ 175--195.

\bibitem{Putnam2}{\sc I. F. Putnam},
{\it  Hyperbolic systems and generalized Cuntz--Krieger algebras},
Lecture Notes, Summer School in Operator Algebras, Odense August 1996.


\bibitem{Putnam3}{\sc I. F. Putnam},
{\it  Functoriality of the $C^*$-algebras associated with hyperbolic dynamical systems},
J.\ London Math.\ Soc.\ {\bf 62}(2000), pp.\ 873--884.



\bibitem{Putnam4}{\sc I. F. Putnam},
{\it  A homology theory for  Smale spaces},
Memoirs of Amer. Math. Soc. {\bf 232}(2014), No. 1094.



\bibitem{PutSp}{\sc I. F. Putnam and J. Spielberg},
{\it  The structure of $C^*$-algebras associated with hyperbolic dynamical systems},
J. Func. Anal. {\bf 163}(1999), pp. \. 279--299.












\bibitem{Renault1}{\sc J. Renault},
{\it A groupoid approach to $C^*$-algebras},
Lecture Notes in Math.  793, 
Springer-Verlag, Berlin, Heidelberg and New York (1980).


\bibitem{Renault2}{\sc J. Renault},
{\it  Cartan subalgebras in $C^*$-algebras},
Irish Math. Soc. Bull. 
{\bf 61}(2008), pp.\ 29--63. 




\bibitem{Renault3}{\sc J. Renault},
{\it Examples of masas in $C^*$-algebras},
 Operator structures and dynamical systems, pp.\ 259--265, 
 Contemp. Math., 503, Amer. Math. Soc., Providence, RI, 2009. 







\bibitem{Ro}
{\sc M. R{\o}rdam},
{\it Classification of Cuntz-Krieger algebras},
 K-theory {\bf 9}(1995), pp.\  31--58.

\bibitem{Ro2}{\sc M. R{\o}rdam},
{\it Classification of purely infinite simple $C^*$-algebras I}, 
J. Func. Anal.\ {\bf 131}(1995), pp.\ 415--458.








\bibitem{RS}
{\sc J. Rosenberg and C. Schochet},
{\it The K{\"u}nneth theorem 
and the universal coefficient theorem for Kasparov's generalized K-functor}, 
Duke Math.\ J.\
{\bf 55}(1987) pp.\ 431--474.



\bibitem{Ruelle1}{\sc D. Ruelle},
{\it Thermodynamic formalism}, Addison-Wesley, Reading (Mass.) (1978).

\bibitem{Ruelle2}{\sc D. Ruelle},
{\it Non-commutative algebras for hyperbolic diffeomorphisms},
Invent. Math.  {\bf 93}(1988), pp.\ 1--13.











\bibitem{Smale}{\sc S. Smale}
{\it Differentiable dynamical systems},
Bull. Amer. Math. Soc. {\bf 73}(1967), pp. \ 747--817.



\bibitem{Thomsen}
{\sc K. Thomsen},
{\it $C^*$-algebras of homoclinic and heteroclinic structure in expansive dynamics},
Memoirs of Amer. Math. Soc. 
{\bf 206}(2010), No 970.







\bibitem{Walter}
{\sc P. Walters},
{\it An Introduction to Ergodic Theory},
Springer Graduate Texts in Math {\bf 79}, New York, Heidelberg, 1982.







\bibitem{Williams} {\sc R. F. Williams},
{\it Classification of subshifts of finite type}, 
 Ann.\ Math.\  {\bf 98}(1973), pp.\ 120--153.
 erratum, Ann.\ Math.\
{\bf 99}(1974), pp.\ 380--381.

\end{thebibliography}
\end{document}